\def\Box{\setlength{\unitlength}{0.01cm}
         \begin{picture}(15,15)(-5,-5)
           \framebox(15,15){}
         \end{picture} }
\newtheorem{theorem}{Theorem}[section]
\newtheorem{lemma}{Lemma}[section]
\newtheorem{rem}{Remark}[section]
\newtheorem{defi}{Definition}[section]
\newtheorem{cex}{Counterexample}[section]
\def\proof{\noindent{\bf Proof.}\,\,}
\def\R{{\rm I}\!{\rm R}}
\newcommand{\bit}{\begin{itemize}}
\newcommand{\eit}{\end{itemize}}
\newcommand{\bq}{\begin{equation}}
\newcommand{\eq}{\end{equation}}
\newcommand{\ds}{\displaystyle}
\newcommand{\hphi}{\widehat{\phi}}
\newcommand{\hx}{\hat{x}}
\newcommand{\hy}{\hat{y}}
\newcommand{\hm}{\widehat{M}}
\newcommand{\at}{\tilde{a}}
\newcommand{\bt}{\tilde{b}}
\newcommand{\hk}{\widehat K}
\newcommand{\oK}{\overline K}
\newcommand{\oQ}{\overline Q}
\newcommand{\hQ}{\widehat Q}
\newcommand{\hu}{\widehat u}
\newcommand{\ou}{\overline u}
\newcommand{\comp}{\underset{C}{\sim}}
\newcommand{\q}{\mathcal Q}
\g@addto@macro{\endabstract}{\@setabstract}
\newcommand{\authorfootnotes}{\renewcommand\thefootnote{\@fnsymbol\c@footnote}}%
\keywords{Quadrilateral Elements, Anisotropic Finite Elements, Lagrange Interpolation, Minimum Angle Condition, Maximum Angle Condition.} \subjclass{65N15,65N30}
\author[G. Acosta]{Gabriel Acosta}
\address{Departamento de Matem\'atica\\ Facultad de
Ciencias Exactas y Naturales\\Universidad de Buenos Aires\\ IMAS-CONICET, (1428)
Buenos Aires\\ Argentina.}
\email{gacosta@dm.uba.ar}
\author[G. Monz\'on]{Gabriel Monz\'on}
\address{Instituto de Ciencias\\ Universidad Nacional de General Sarmiento\\
J. M. Guti\'errez 1150 \\ 
(1613) Los Polvorines\\
Buenos Aires\\ Argentina.}
\email{gmonzon@ungs.edu.ar}
\thanks{The first author is a member of CONICET. This work is partially supported by grants 
PIP 11220130100184CO 
and UBACYT 20020130100205BA} 
\title[The minimal angle condition for quadrilateral  finite elements]{ The minimal angle condition for quadrilateral  finite elements
of arbitrary degree}
\begin{document}
\maketitle
\begin{abstract}  We study $W^{1,p}$ Lagrange interpolation error estimates for general \emph{quadrilateral} $\q_{k}$ finite elements with $k\ge 2$. For
the most standard case of $p=2$ it turns out that the constant $C$ involved in the error estimate can be bounded in terms of the \emph{minimal interior angle} of the quadrilateral. Moreover, the same holds for any $p$ in the range $1\le p<3$. On the other hand, for $3\le p$ we show that $C$ also depends  on the 
\emph{maximal interior angle}. We provide some counterexamples showing that our results are sharp. 

\end{abstract}

\section{Introduction}
\label{section1}
\setcounter{equation}{0}

This paper deals with error estimates in the $W^{1,p}$ norm for the   $\q_k$ Lagrange interpolation on a general convex quadrilateral $K\subset \R^2$. Denoting the interpolant with $Q_k$
the standard error estimate is usually found in the form
\bq
\label{eq:gralestimate}
\left\| u-Q_k u \right\|_{0,p,K} + h \left| u-Q_ku \right|_{1,p,K} \leq Ch^{k+1} |u|_{k+1,p,K},
\eq
being $h$ the diameter of $K$. Inequality \eqref{eq:gralestimate} involves the  $L^p$ error  estimate 
\bq
\label{errorinLp}
\left\| u-Q_k u \right\|_{0,p,K} \leq Ch^{k+1} |u|_{k+1,p,K},
\eq
and the seminorm estimate
\bq
\label{errorinwp}
\left| u-Q_ku \right|_{1,p,K} \leq Ch^k |u|_{k+1,p,K}.
\eq
A central matter of \eqref{eq:gralestimate} concerns the dependence of $C$ 
on basic geometric quantities of the underlying element $K$. It is known that the constant $C$ in \eqref{errorinLp} remains uniformly bounded for arbitrary convex quadrilaterals (see Theorem \ref{teoerrornormap}).  However  this statement is false for  the constant $C$ in \eqref{errorinwp} (see for instance the counterexamples in the last section). The primary goal of this paper is to study the dependance of $C$ in
\eqref{errorinwp} on the \emph{interior angles} of $K$.  Although the role of the interior angles have been related to $C$ in many previous works, none of them, to the best of the authors knowledge,  have given a result as plain as the one offered in this paper. For instance, bounding
the minimal and the maximal inner angle is considered a central matter in  mesh generation algorithms since the early work by Ciarlet and Raviart \cite{CR}, however no proof of sufficiency has been given as far (at least for an arbitrary  degree of interpolation). 

In order to present our results let us first introduce the following classical definition that we write for both  triangles and quadrilaterals for further convenience.
\begin{defi}
Let $K$ (resp. $T$) be a convex quadrilateral (resp. a triangle). We say that $K$ (resp. $T$) satisfies the {\slshape{minimum angle condition}} with constant $\psi_m \in \R$, or shortly $mac(\psi_m)$, if for any internal angle $\theta$ of $K$ (resp. $T$) $0< \psi_m \leq \theta$.
\end{defi}
Our first result says that the constant in \eqref{errorinwp}, for a fixed
degree $k$ and a fixed value of $p$ with $1\le p<3$, can be written as
$C=C(\psi_m)$. As a consequence, the same can be stated about the constant in   \eqref{eq:gralestimate}. This seems to be the most general result available for quadrilaterals
in the case $k\ge 2$. In spite of the fact that much weaker geometrical conditions are known to be sufficient for the case $k=1$,  we show, by means of a counterexample, that they fail for a higher degree interpolation. This counterexample also warns that removing the minimal angle condition may indeed lead to a blowing-up constant in \eqref{errorinwp}. 

The $mac$ is  the most standard condition considered in textbooks for \emph{triangular} finite elements. Actually, in that case, it is equivalent to the so called \emph{regularity condition}, i.e. equivalent to the existence of a constant $\sigma$ such that
\bq
\label{regcond}
h/\rho \leq \sigma,
\eq
where $\rho$ denotes the diameter of the maximum circle contained in $T$. On the other hand, the term {\it{anisotropic}} or {\it{narrow}} is usually applied to  elements that do not satisfy \eqref{regcond}. Even when  \emph{triangles} can become narrow only if the minimal angle is approaching zero a very different situation occurs on quadrilaterals. Indeed, in that case the $mac$ condition is less restrictive than \eqref{regcond} since arbitrarily narrow elements are allowed with a positive uniform bound for the minimal angle (for example, anisotropic rectangles always verify $mac(\pi/2)$). Anisotropic  elements are important for instance in problems involving singular layers and the first works dealing with them arise during the seventies showing that  \eqref{regcond} can be replaced (for triangles) by the 
weaker following condition
\begin{defi}
Let $K$ (resp. $T$) be a convex quadrilateral (resp. triangle), we say that $K$ (resp. $T$) satisfies the {\slshape{maximum angle condition}} with constant $\psi_M \in \R$, or shortly $MAC(\psi_M)$, if for any internal angle $\theta$ of $K$ (resp. $T$) $\theta \leq \psi_M < \pi$.
\end{defi}
Indeed, in \cite{BA,J1} it is proved that the $MAC$ is  sufficient  to have optimal order error estimates for Lagrange interpolation on \emph{triangles}. 
In the case of quadrilateral elements, \eqref{regcond} it is also a sufficient
condition as it was  shown by Jamet \cite{J2} for $k=1$ and $p=2$. This condition is less restrictive than that propoposed in \cite{CR} where the authors require the existence of two positive constants $\mu_1, \mu_2$ such that
\bq
\label{shortside}
h/s \leq \mu_1
\eq
where $s$ is the length of the shortest side of $K$, and
\bq
\label{innerangles}
|\cos(\theta)| \leq \mu_2 <1
\eq
for each inner angle $\theta$ of $K$. 
Observe that under the regularity condition (\ref{regcond}) the quadrilateral can degenerate into a triangle (for instance if the shortest side tends to zero faster than their neighboring sides or if the maximum angle of the element approaches $\pi$), however this kind of quadrilateral cannot become too narrow. Condition  (\ref{innerangles}) will play an important role in the sequel and therefore we introduce the following alternative definition. 
\begin{defi}
\label{defidac}
We say that a quadrilateral $K$ satisfies the {\slshape{double angle condition}} with constants $\psi_m, \psi_M$, or shortly $DAC(\psi_m,\psi_M)$, if $K$ verifies $mac(\psi_m)$ and $MAC(\psi_M)$ simultaneously, i.e., if all inner angles $\theta$ of $K$ verify $0<\psi_m \leq \theta \leq \psi_M<\pi$.
\end{defi}
The   $DAC$  allows anisotropic quadrilaterals (such as narrow rectangles) as well as families of quadrilaterals that may degenerate into triangles. To see that consider, for instance, a quadrilateral with vertices $(0,0), (1,0), (s,1-s)$ and $(0,1-s)$ and take $0<s\to 0$.

For anisotropic quadrilaterals several papers have been written mainly in the \emph{isoparametric} case with $k=1$. In \cite{ZV1,ZV2} narrow quadrilaterals are studied and the authors require  the two longest sides of the element to be opposite and almost parallel, the constant C obtained by them  depends on an angle which \emph{in some cases} is the minimum angle of the element. \emph{Anisotropic error estimates} for small perturbations of rectangles have been derived in \cite{A1,A2}.  On the other hand, for $k=1$, more general and subtle conditions can be found in the literature. For $k=1$ and $p=2$, it is proved in \cite{AD2}  that the optimal error estimate (\ref{errorinwp}) can be obtained under the following weak condition 
 \begin{defi}
Let $K$ be a convex quadrilateral, and let $d_1$ and $d_2$ be the diagonals of $K$. We say that $K$ satisfies the {\slshape{regular decomposition property}} with constants $N \in \R$ and $0<\psi_M < \pi$, or shortly $RDP(N,\psi_M)$, if we can divide $K$ into two triangles along one of its diagonals, that will be called always $d_1$, in such a way that $|d_2|/|d_1| \leq N$ and both triangles have its maximum angle bounded by $\psi_M$.
\end{defi}

In Remarks 2.3 - 2.7 of \cite{AD2} it is shown that the regular decomposition property $RDP$ is certainly much weaker than those considered in previous works (including  \cite{A1,A2,CR,J2,ZV1,ZV2}). We collect for further reference some elementary remarks

\begin{rem}
\label{rem:regimplrdp}
If a quadrilateral $K$ satisfies \eqref{regcond} then  $K$ verifies $RDP(\sigma, \psi)$ whith $\psi=\psi(\sigma)$. Indeed, \eqref{regcond} implies that $K$ verifies $mac(\theta)$ for some $\theta=\theta(\sigma)>0$. Therefore
there is at most one angle of $K$ not bounded by $\pi -\theta$.
Dividing $K$ by the diagonal $d_1$ containing the vertex associated to that angle we get that $K$ satisfies $RDP(\sigma,\pi -\theta)$. 
\end{rem}
\begin{rem}
\label{rem:macimplrdp}
If a quadrilateral $K$ satisfies $MAC(\psi_M)$ then  $K$ verifies $RDP(1, \psi_M)$, as one can see by taking
$d_1$ as the longest diagonal of $K$.
\end{rem}
Since, by definition, $DAC(\psi_m,\psi_M)$ implies
$MAC(\psi_M)$ we have
\begin{rem}
\label{rem:dacimplrdp}
If a quadrilateral $K$ satisfies $DAC(\psi_m,\psi_M)$ then  $K$ verifies $RDP(1, \psi_M)$. 
\end{rem}
\begin{rem}
\label{dacorrdp} If $K$ verifies the $mac(\psi_m)$, then $K$  either verifies
\begin{enumerate}
 \item $DAC(\psi_m, \pi-\frac{\psi_m}{2})$ or
 \item the regularity condition \eqref{regcond} with $C=C(\psi_m)$.
\end{enumerate}
Indeed, assume that (1) does not hold. Then $K$ has an internal angle which is greater than $\pi-\psi_m/2$, it is easy to see that this angle is unique so we can call it $\theta$.  Divide $K$ into two triangles $T_1$ and $T_2$ through the diagonal opposite to $\theta$ in such a way that $\theta$ becomes an internal angle of $T_1$. Calling $\beta_1$ and $\beta_2$ to the other angles of $T_1$ it follows that $\beta_i < \psi_m/2$ with $i=1,2$. Let $\gamma_i$, $i=1,2$, the complementary angle of $\beta_i$ (w.r.t. the corresponding internal angle of $K$). It is easy to see that $\gamma_i>\psi_m/2$, so $T_2$ is a triangle that have its three internal angles bounded away from $0$. To be more precise, $T_2$ verifies $mac(\psi_m/2)$, therefore $T_2$ is a regular triangle in the sense of \eqref{regcond}
with $C=C(\psi_m/2)$. From this fact follows easily that $K$ is a regular quadrilateral in the same sense, i.e., in such a way that (\ref{regcond}) holds (actually the same constant $C$ can be used). \Box  
\end{rem}

\begin{rem}
Combining Remarks \ref{rem:regimplrdp},  \ref{rem:dacimplrdp} and \ref{dacorrdp} it is clear that
$mac \implies RDP$. 
\end{rem}
An strikingly result is that the $RDP$, in spite of being appropriate for $p=2$, does not work for arbitrary values of $p$. Indeed in \cite{AM} and for $k=1$ the results of \cite{AD2} are extended for the error in $W^{1,p}$ with  $1 \leq p <3$. Moreover, it is  shown, by means of a counterexample, that this range for $p$ is sharp. As a consequence the stronger $DAC$ (i.e. (\ref{innerangles})) is proposed and shown that under this condition the error estimate hold for all $p \geq 3$. The second result given in the present paper is that, for $p$ in this range, the $DAC$ is also a sufficient  for any  $k\ge 2$.
 
For the reader's convenience we summarize in the following table simple and sufficient geometric 
 conditions pointing out the role of $p$ and $k$ 
 \begin{center}
  \begin{tabular}{| l | c | r|}
    \hline
        & $1\le p<3$ & $3\le p$ \\ \hline
    $k=1$ & RDP & DAC \\ \hline 
    $k\ge 2$ & mac & DAC \\
    \hline
  \end{tabular}
\end{center}
the first
 row of the table was proved in \cite{AM} while
the new results are given in the second row.

To finish this short review  
we recall that for $k=1$ more results are available. In \cite{MN}, $H^1$ error estimates are obtained for the $\q_1$ isoparametric Lagrange interpolation under a weaker condition than the $RDP$. This condition can be regarded as a generalization of the last one and therefore called $GRDP$.   

 \bigskip
This paper is structured as follows: in Section $2$ we introduce a family of reference elements appropriate for dealing with general convex quadrilaterals and some key results are provided. In Section $3$ and Section $4$ our family is related to different geometric conditions (such as $RDP$, $MAC$, $DAC$ and $mac$) while some properties about the distribution of the interpolation nodes of the family are studied.
Section $5$ gives the general approach for bounding the interpolation error. Finally the main results as well as the counterexamples can be found in the last section of the paper.

\section{The family of reference $K(a,b,\at,\bt)$}
\label{section2}
\setcounter{equation}{0}
With $K$ we denote an arbitrary  \emph{convex} quadrilateral with vertices $V_1, V_2, V_3, V_4$ enumerated in  counterclockwise order. For positive numbers $a,b,\at,\bt$, we use $K(a,b,\at,\bt)$ to represent a  quadrilateral (always convex) with vertices $V_1=(0,0)$, $V_2=(a,0)$, $V_3=(\at,\bt)$ and $V_4=(0,b)$. In particular, $\hk=K(1,1,1,1)$ is the reference unit square and for any positive integer  $k$ we consider  $(k+1)^2$ points $\{ \hm_{ij} \}_{0 \leq i,j \leq k}$ of coordinates $\hx=j/k$ and $\hy=i/k$. For $\hk$ we write $\hat{V}_1=\hm_{00}$, $\hat{V}_2=\hm_{0k}$, $\hat{V}_3=\hm_{kk}$ and $\hat{V}_4=\hm_{k0}$.

We define as usual  $F_K : \hk \to K$  as $F_K(\hx)= \sum_{i=1}^4 V_i \hphi_i(\hx)$ being $\hphi_i$ the bilinear basis function associated with the vertex $\hat{V}_i$, i.e., $\hphi_i(\hat{V}_j)=\delta_i^j$. Then, in $K$ we have $(k+1)^2$ points $\{ M_{ij} \}_{0 \leq i,j \leq k}$ defined by
$$
M_{ij}=F_K(\hm_{ij}).
$$

For quadrilateral elements (isoparametric  when $k=1$ and subparametric otherwise), we have the basis function on $K$ defined by $\phi_{ij}(X)=\hphi_{ij}(F_K^{-1}(X))$ where $\hphi_{ij} \in \widehat{\q}_k(\hk)$ verifies $\hphi_{ij}({\hm_{lr}})=\delta_{ij}^{lr}$ and therefore the $\q_k$ interpolation operator $Q_k$, on $K$ is given by
$$Q_k u(X)=\hQ_k \hu(\widehat{X})$$
where $X=F_K(\widehat{X})$ and $\hQ_k$ is the Lagrange interpolation of order $k$ of $\hu = u \circ F_K$ on $\hk$. Interpolation nodes of the form $\{ M_{ij} \}_{1 \leq i,j \leq k-1}$ are called \emph{interior} and any $M_{ij}$ which is not an interior node is called an \emph{edge} node.
Also of interest is the triangle  $T(a,b)$ of vertices $V^T_1=(0,0), V^T_2=(a,0),V^T_3=(0,b)$.
The interpolation nodes $M_{ij}^T$ of the Lagrange interpolation operator  $\Pi_k\in \mathbb{P}_k$ of degree $k$ are given by $M_{ij}^T=(aj/k,bi/k)$, $0\le i+j\le k$.
With $C$ we denote a positive constant that may change from line to line. Sometimes we also use the notation $x\comp y$  for   positive variables if they are comparable in the following sense $\frac{1}{C}x \le y\le Cx$.

For any element of the type $K(a,b,\at,\bt)$ considered in this work the following condition will become relevant
in different contexts
\begin{equation}
 \label{condDelta1}
 \mbox{Condition ($\Delta$1): } \qquad \frac{\at}{a}, \frac{\bt}{b} \leq C.
\end{equation}
This condition takes sometimes the more restrictive form
\begin{equation}
 \label{condD1}
 \mbox{Condition ($D$1): } \qquad \frac{\at}{a}, \frac{\bt}{b} \leq 1.
\end{equation}
In spite of the fact that both \eqref{condDelta1} and \eqref{condD1} look similar they characterize,
under the supplementary geometric assumption \eqref{condD2} (see below),  different classes of elements.

Calling $d_1$ to the diagonal joining $V_2$ and $V_4$ we see that $d_1$  divides
$K$ into two triangles, that we call $T_1$ and $T_2$ (see Figure \ref{fig:Kab}). For the angle $\alpha$  of $T_1$ placed at $V_4$, we  introduce 
\begin{equation}
 \label{condD2}
 \mbox{Condition ($D$2): } \qquad \frac{1}{sin(\alpha)} \leq C,
\end{equation}
which says that $\alpha$ is bounded away from $0$ and $\pi$. 

\begin{figure}[h]
\resizebox{8cm}{6cm}{\includegraphics{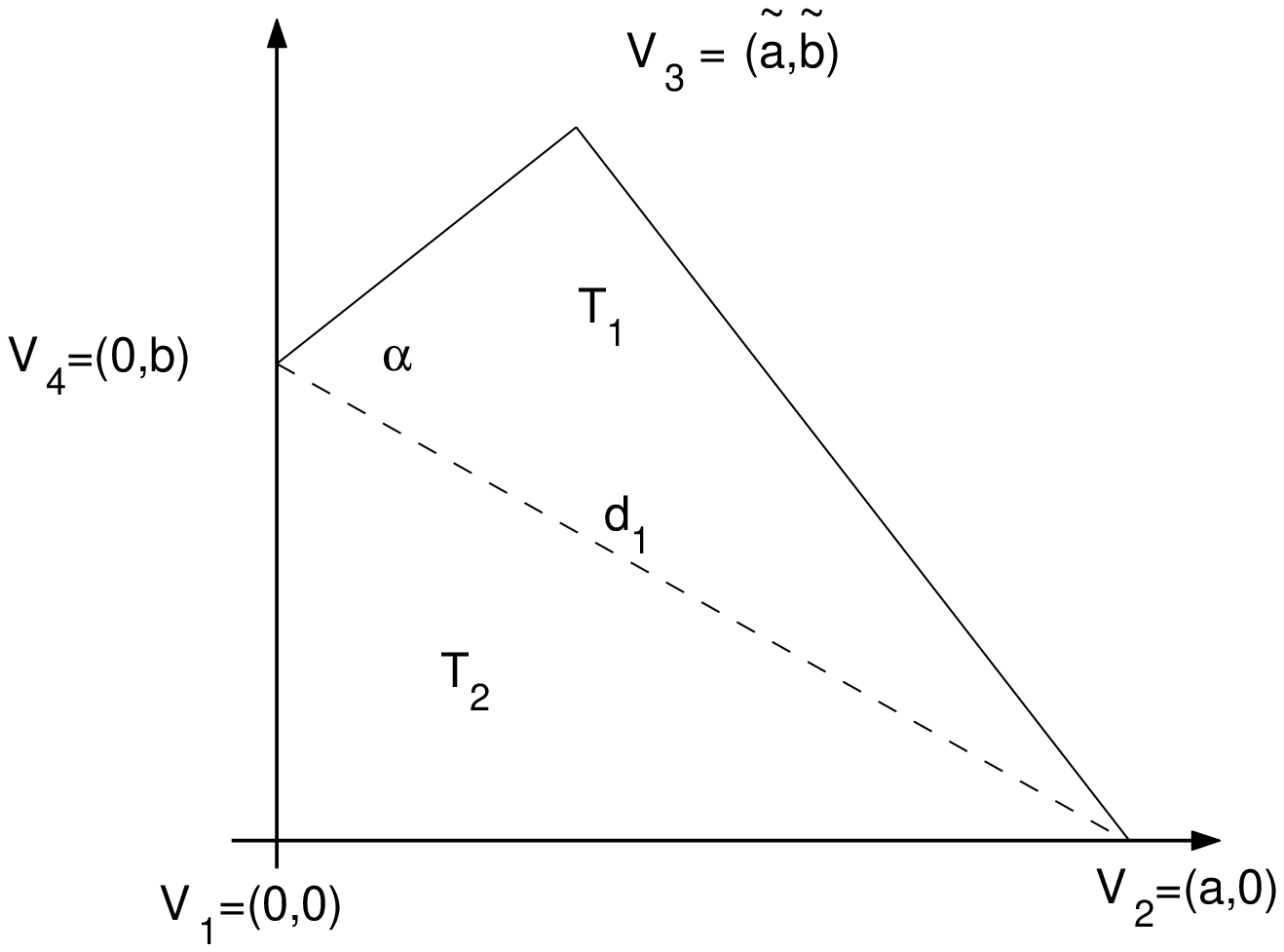}}
\caption{Notation for an element $K(a,b,\at,\bt)$.}
 \label{fig:Kab}
\end{figure}
Finally, in order to exploit some results
given in previous works, we introduce yet another 
useful condition
\begin{equation}
 \label{condDelta2}
 \mbox{Condition ($\Delta$2): } \qquad  |l| \leq C |s|.
\end{equation}
where $l$ is the segment $\overline{V_3V_4}$ joining $V_3$ and $V_4$ and $s$ denotes the shortest side of $K(a,b,\at,\bt)$. That is, $(\Delta 2)$ amounts to say that the side $l$ is comparable to the shortest side of $K$. 

Not difficult to prove is the following
\begin{lemma}
 \label{lema:otracond}
 Let $K(a,b,\at,\bt)$ a general convex quadrilateral. Then, conditions 
 $[\Delta 1,D2]$  and  $[\Delta 2, D2]$
are equivalents.

\end{lemma}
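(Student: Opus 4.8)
The plan is to prove the two implications separately. Since condition $(D2)$ is common to both bracketed hypotheses it is simply carried along, so the claim reduces to showing that, under the standing assumption $(D2)$, one has $(\Delta 1)\iff(\Delta 2)$. Throughout I use that the four sides of $K(a,b,\at,\bt)$ are $|V_1V_2|=a$, $|V_4V_1|=b$, $l=|V_3V_4|=\sqrt{\at^2+(b-\bt)^2}$ and $|V_2V_3|$, that the diagonal satisfies $|d_1|=\sqrt{a^2+b^2}\ge\max\{a,b\}$, and that the shortest side $s$ obeys $s\le a$ and $s\le b$. The implication $(\Delta 2)\Rightarrow(\Delta 1)$ is the easy one and needs no geometric input at all: from the explicit form of $l$ I read off $\at\le l$ and $|b-\bt|\le l$, while $(\Delta 2)$ gives $l\le Cs\le C\min\{a,b\}$, so at once $\at\le Ca$ and $\bt\le b+|b-\bt|\le(1+C)b$, which is $(\Delta 1)$.

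For the reverse implication $(\Delta 1)+(D2)\Rightarrow(\Delta 2)$ the key remark is that $l$ is itself a side, so $s\le l$ holds automatically; hence establishing $(\Delta 2)$, i.e. $l\le Cs$, is \emph{equivalent} to bounding $l$ by a fixed multiple of each of the other three sides. I obtain these three comparisons as follows. Writing $\alpha$ for the angle of $T_1=\triangle V_2V_3V_4$ at $V_4$, that is, the angle between the diagonal $d_1=\overline{V_2V_4}$ and the side $l=\overline{V_3V_4}$, the shoelace formula gives $|d_1|\,l\,\sin\alpha=2\,\mathrm{Area}(T_1)=a\bt+b\at-ab$, the right-hand side being positive by convexity. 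Condition $(D2)$, namely $\sin\alpha\ge 1/C$, then reads $|d_1|\,l\le C(a\bt+b\at-ab)\le C(a\bt+b\at)$; dividing by $|d_1|\ge a$ and by $|d_1|\ge b$ yields $l\le C(\bt+\frac{b}{a}\at)$ and $l\le C(\frac{a}{b}\bt+\at)$, and now $(\Delta 1)$ (i.e. $\at\le Ca$, $\bt\le Cb$) collapses these to $l\le Cb$ and $l\le Ca$. The remaining comparison $l\le C|V_2V_3|$ needs only $(D2)$: by the law of sines in $T_1$, with $\beta$ the angle at $V_2$, $l=|V_2V_3|\,\frac{\sin\beta}{\sin\alpha}\le\frac{|V_2V_3|}{\sin\alpha}\le C|V_2V_3|$. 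Combining the three bounds gives $l\le Cs$, which is $(\Delta 2)$.

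I expect the only real subtlety to be the correct reading of $(D2)$: one must identify $\alpha$ as the angle of $T_1=\triangle V_2V_3V_4$ at $V_4$ (the angle between $d_1$ and the side $l$) and recognize its sine as $2\,\mathrm{Area}(T_1)/(|d_1|\,l)$. This identity is exactly what couples $\at$ to $b$ and $\bt$ to $a$, and it is responsible for the asymmetry between the two directions: the bounds $l\lesssim a,b$ genuinely require both $(\Delta 1)$ and $(D2)$, while $l\lesssim|V_2V_3|$ is pure $(D2)$. That $(D2)$ cannot be dropped is clear from a thin quadrilateral with $b\ll a$ and $V_3$ chosen so that $\at\comp a$, $\bt\comp b$: it satisfies $(\Delta 1)$, yet $l\ge\at\comp a\gg b\ge s$, so $(\Delta 2)$ fails.
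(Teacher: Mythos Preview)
Your proof is correct and, for the harder implication $(\Delta 1)+(D2)\Rightarrow(\Delta 2)$, takes a genuinely different route from the paper. The paper proceeds by case analysis: it first uses the law of sines in $T_1$ (together with $(\Delta 1)$) to show that the angle $\beta$ at $V_3$ is bounded away from $0$ and $\pi$, deduces $|l|\le C\min\{|V_2V_3|,|d_1|\}$, and then splits into the three regimes $b/a\to 0$, $a\comp b$, and $a/b\to 0$, arguing in each case (via the direction of the vectors $V_2-V_4$ and $V_3-V_4$) that $(D2)$ forces a relation between $\at$ and $|b-\bt|$ which yields $|l|\le C\min\{a,b\}$. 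Your argument replaces all of this by the single identity $|d_1|\,|l|\,\sin\alpha=2\,\mathrm{Area}(T_1)=a\bt+b\at-ab$, which together with $|d_1|\ge\max\{a,b\}$ and $(\Delta 1)$ gives $|l|\le Cb$ and $|l|\le Ca$ in one stroke, no cases needed; the bound $|l|\le C|V_2V_3|$ follows from the law of sines exactly as in the paper. This is cleaner and more direct than the paper's version; the paper's case-by-case treatment has the minor compensating advantage of making visible \emph{which} geometric degeneration is being ruled out by $(D2)$ in each regime, but your final paragraph already supplies that intuition. The easy direction $(\Delta 2)\Rightarrow(\Delta 1)$ is handled identically in both proofs.
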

\begin{proof}
That $(\Delta 2)$ implies $(\Delta 1)$ follows
easily as we have $\at\le |l|\le C |s|\le C a$ and similarly $|b-\bt|\le Cb$. Hence triangular inequality yields $\bt\le Cb$. On the other hand
assume $[\Delta 1,D2]$. Thanks to $(D2)$, and using
the law of sines for the triangle $\Delta(V_2,V_3,V_4)$ we see that the angle $\beta$ at $V_3$ is away from
zero and $\pi$. Indeed $\frac{\sin \alpha}{\sin \beta }=\frac{|V_2V_3|}{|d_1|} \le C$ due to $(\Delta 1)$. Since
both $\alpha$ and $\beta$ are away from zero and $\pi$ the law of sines says that actually $d_1$ is comparable to $|V_2V_3|$.
It implies, in turn, that $|l|\le C \min \{|V_2V_3|,|d_1| \} $. Now consider two cases: 
if $\frac{b}{a}$ approaches zero then necessarily $\frac{|b-\bt|}{\at}>C>0$, otherwise the angle $\alpha$ can not obey $(D2)$. Hence we have simultaneously, $b<a$ 
and (due to $(\Delta 1)$) $|l|=\sqrt{(b-\bt)^ 2+\at^2}\le C|b-\bt|\le Cb$  showing $(\Delta 2)$. To finish, let us assume that $\frac{b}{a}>C>0$,
in such a case if $a$ and $b$ are comparable we have nothing to prove since then they are also comparable to $d_1$. Therefore we may assume that $\frac{a}{b}$ approaches cero. In that case $\frac{|b-\bt|}{\at}\le C$ since $\alpha$ can not get close to $\pi$ nor to zero. Therefore we have $a<b$ and
$|l|= \sqrt{(b-\bt)^ 2+\at^2}\le C\at \le Ca$ thanks to $(D1)$. The lemma follows. \Box
\end{proof}

The class of reference elements of the type $K(a,b,\at,\bt)$ is adequate for dealing with
several geometrical conditions. For instance,
as we show below, [$\Delta 1,D2$] (or equivalently [$\Delta 2,D2$]) describe any element under the RDP and $[D1,D2]$ the family of elements under the $DAC$, etc.

In order to do so, the key tool is given by the following elementary lemma 

\begin{lemma}
\label{comperror}
Let $K$, $\oK$ be two convex quadrilateral elements, and let $L:K \to \oK$ be an affine transformation $L(X)=BX+P$. Assume that $L(K)=\oK$, $\|B\|,\|B^{-1}\| < C$ (in particular the condition number $\kappa(B)< C$). If $\oQ_k$ is the $\mathcal{Q}_k$ (isoparametric
for $k=1$ or subparametric for $k>1$) interpolation on $\oK$ and $\ou=u \circ L^{-1}$ then for any $p \geq 1$
$$C_1 |\ou-\oQ_k\ou|_{1,p,\oK} \leq |u-Q_ku|_{1,p,K} \leq C_2 |\ou-\oQ_k\ou|_{1,p,\oK}$$
and
$$C_1 |\ou|_{m,p,\oK} \leq |u|_{m,p,K} \leq C_2 |\ou|_{m,p,\oK}$$
for any $m \geq 1$.
\end{lemma}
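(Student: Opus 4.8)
The plan is to reduce both stated equivalences to the single observation that $\ou$ and the error $\ou-\oQ_k\ou$ are affine pullbacks under $L$ of the corresponding objects on $K$; once this is in hand, the inequalities are the classical affine scaling estimates for Sobolev seminorms, with constants controlled by $\|B\|$ and $\|B^{-1}\|$.

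The one genuinely structural step is to prove the commutation relation $\oQ_k\ou=(Q_ku)\circ L^{-1}$. The key is that the affine map $L$ intertwines the bilinear reference maps. Writing $F_K(\hx)=\sum_{i=1}^4 V_i\hphi_i(\hx)$ and using that the bilinear basis is a partition of unity, $\sum_i\hphi_i\equiv1$, I would compute
\[
L\circ F_K(\hx)=BF_K(\hx)+P=\sum_{i=1}^4(BV_i+P)\hphi_i(\hx)=\sum_{i=1}^4 L(V_i)\hphi_i(\hx).
\]
Since the $L(V_i)$ are precisely the vertices of $\oK$, the right-hand side is the bilinear map $F_{\oK}:\hk\to\oK$; hence $F_{\oK}=L\circ F_K$ and the nodes of $\oK$ are the images $L(M_{ij})$. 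Crucially, the pullback of $\ou$ to the reference square coincides with that of $u$,
\[
\ou\circ F_{\oK}=(u\circ L^{-1})\circ(L\circ F_K)=u\circ F_K=\hu,
\]
so $\oQ_k\ou$ and $Q_ku$ arise from the \emph{same} reference interpolant $\hQ_k\hu$. Composing with $F_{\oK}^{-1}=F_K^{-1}\circ L^{-1}$ then gives $\oQ_k\ou=\hQ_k\hu\circ F_K^{-1}\circ L^{-1}=(Q_ku)\circ L^{-1}$, and therefore $\ou-\oQ_k\ou=(u-Q_ku)\circ L^{-1}$.

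The second step is the routine affine scaling estimate, which I would state once and apply twice. For any $w\in W^{m,p}(K)$ and $v=w\circ L^{-1}$, the chain rule gives $|D^m v|\le C\|B^{-1}\|^m\,(|D^m w|\circ L^{-1})$ pointwise, and the change of variables $Y=L(X)$, with $dY=|\det B|\,dX$, yields
\[
|v|_{m,p,\oK}\le C\,\|B^{-1}\|^m\,|\det B|^{1/p}\,|w|_{m,p,K},
\]
together with the symmetric reverse bound involving $\|B\|^m|\det B|^{-1/p}$. Applying this with $(w,v)=(u,\ou)$ gives $|\ou|_{m,p,\oK}\comp|u|_{m,p,K}$, and applying it with $(w,v)=(u-Q_ku,\ou-\oQ_k\ou)$, using Step 1, gives the error equivalence.

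It remains to check that the prefactors depend only on the hypothesis constant $C$. In two dimensions $|\det B|\le\|B\|^2$ and $|\det B|^{-1}=|\det B^{-1}|\le\|B^{-1}\|^2$, so $\|B\|,\|B^{-1}\|<C$ forces $|\det B|^{\pm1/p}$ to lie between two positive constants depending only on $C$; thus every scaling factor is bounded above and below by constants $C_1,C_2=C_1,C_2(C,m,p)$, as claimed. The only delicate point is Step 1: since the $\q_k$ interpolation is defined through the bilinear map rather than by sampling directly on $K$, one must use the partition-of-unity identity $\sum_i\hphi_i\equiv1$ to see that $L\circ F_K$ is again bilinear and hence equals $F_{\oK}$ — this is what makes the two reference pullbacks literally identical and the whole argument collapse to scaling.
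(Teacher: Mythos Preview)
Your proof is correct and follows essentially the same approach as the paper: establish $F_{\oK}=L\circ F_K$ (hence $\oQ_k\ou=(Q_ku)\circ L^{-1}$), then invoke standard affine scaling for Sobolev seminorms. The paper's own argument is terser, merely asserting $F_{\oK}=L\circ F_K$ from affinity and then appealing to $\|B\|,\|B^{-1}\|<C$; your version supplies the partition-of-unity justification and the explicit scaling constants, which is more careful but not a different route.
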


\begin{proof} By definition of the interpolation we have
$$Q_ku(X)=\hQ_k \hu(F_K^{-1}(X)) \quad \mbox{and} \quad \oQ_k \ou(\overline{X})=\hQ_k \hat{\ou}(F_{\oK}^{-1}(\overline{X})).$$

Where $\overline{X}$ denotes the variable on $\oK$. Since $L$ is an \emph{affine} transformation, $F_{\oK} = L \circ F_K$ and so $\oQ_k \ou(\overline{X})=Q_ku(X)$. Then, the lemma follows easily by observing that $\left\| B \right\|, \left\| B^{-1} \right\| < C$. \Box
\end{proof}
\begin{defi}
 \label{def:equi}
 We say that two quadrilateral elements $K,\oK$ are $C$-equivalents (or simply \emph{equivalents}) if and only if they
can be mapped to each other by means of  an affine transformation of the kind given in Lemma \ref{comperror}.  
\end{defi}
Taking into account that each geometric condition defined as far is going to be mapped to an appropriate class of  \emph{equivalent} elements $K(a,b,\at,\bt)$ it is important to consider the map $F_K:\hat K \to K(a,b,\at,\bt)$,
\bq
\label{FK}
F_K(\hx,\hy)=(a\hx(1-\hy)+\at\hx\hy, b\hy(1-\hx)+\bt\hx\hy)=(x,y),
\eq
as well as its associated Jacobian  

\bq
\label{DFK}
DF_K(\hx,\hy) =
\left( \begin{array}{cc}
a+\hy(\at-a) & \hx(\at-a)\\
\hy(\bt-b) & b+\hx(\bt-b)
\end{array} \right),
\eq

\bq
\label{jacoFK}
J_K:=\det(DF_K)(\hx,\hy)=ab(1+\hx(\bt/b-1)+\hy(\at/a-1)).
\eq

Observe that since $K$ is convex, we have $J_K>0$ in the interior of $\hk$. Indeed, since $J_K$ is an affine function it is enough to verify that it is positive at some vertex of $\hk$ and non negative at the remaining ones. The positivity at $\hat{V}_1=(0,0)$ is trivial, as well as the non negativity at $\hat{V}_2=(a,0)$ and $\hat{V}_4=(0,b)$. On the other hand, since $K$ is convex, $(\at,\bt)$ lies above the segment joining $V_2$ and $V_4$ (for this segment $y(x)=-(b/a)(x-a)$ and $0<\frac{\bt-y(\at)}{b}=\frac{\bt}{b}+\frac{\at}{a}-1$) therefore,  
\bq
\label{convJK}
J_K(1,1)=ab(\at/a+\bt/b-1) > 0.
\eq

Following again \cite{AD2,AM} we introduce for  $p\ge 1$  the next expression that becomes useful in the sequel
 \bq
\label{defI}
I_p=I_p(a,b,\at,\bt):= \ds \int_0^1 \int_0^1 \frac{1}{(1+\hx(\bt/b-1)+\hy(\at/a-1))^{p-1}}\ d\hx d\hy,
\eq
 where the numbers $a,b,\at,\bt$ are compatible with an element $K(a,b,\at,\bt)$.

\begin{lemma}
\label{edgesnodes}
If $K=K(a,b,\at,\bt)$ is convex and  $(\Delta 1)$ given by \eqref{condDelta1} holds, then for any $p \geq 1$ and for any function basis $\phi$ there exists a positive constant $C$ such that
\bq
\ds \left\| \frac{\partial\phi}{\partial x} \right\|_{0,p,K}^p \leq C \frac{b}{a^{p-1}} I_p
\eq

\bq
\ds \left\| \frac{\partial\phi}{\partial y} \right\|_{0,p,K}^p \leq C \frac{a}{b^{p-1}} I_p.
\eq
\end{lemma}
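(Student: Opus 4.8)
The plan is to pull everything back to the reference square $\hk$ through $F_K$ and to recognize the resulting integral as exactly $I_p$. First I would write $\phi=\hphi\circ F_K^{-1}$ and apply the chain rule, so that $\nabla_X\phi=(DF_K)^{-T}\nabla_{\hat X}\hphi$. Using the explicit Jacobian \eqref{DFK} together with its adjugate, this gives componentwise
\[
\frac{\partial\phi}{\partial x}=\frac{1}{J_K}\Big((b+\hx(\bt-b))\,\partial_{\hx}\hphi-\hy(\bt-b)\,\partial_{\hy}\hphi\Big),
\]
and the analogous expression for $\partial_y\phi$ built from the first column of \eqref{DFK}, namely $a+\hy(\at-a)$ and $\hx(\at-a)$.

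Next I would change variables $X=F_K(\hx,\hy)$ in the integral defining the norm, which introduces the area element $J_K\,d\hx\,d\hy$. The crucial algebraic point is that the factor $J_K^{-p}$ produced by raising $\partial_x\phi$ to the $p$-th power combines with the single $J_K$ from the change of variables to leave only $J_K^{-(p-1)}$. Thus
\[
\Big\|\tfrac{\partial\phi}{\partial x}\Big\|_{0,p,K}^p=\int_{\hk}\frac{1}{J_K^{\,p-1}}\,\big|(b+\hx(\bt-b))\partial_{\hx}\hphi-\hy(\bt-b)\partial_{\hy}\hphi\big|^p\,d\hx\,d\hy .
\]
Here is where $(\Delta 1)$ enters, and it is the only step requiring care. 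Since $\hphi$ is a fixed polynomial on the fixed square $\hk$, its gradient is bounded by a constant depending only on $k$. It remains to bound the two matrix entries: writing $b+\hx(\bt-b)=b(1-\hx)+\hx\bt$ and using $0\le\hx,\hy\le1$ together with $\bt\le Cb$ from \eqref{condDelta1}, both $|b+\hx(\bt-b)|$ and $|\hy(\bt-b)|$ are at most $Cb$. Hence the bracketed quantity is $\le Cb$, and factoring out $b^p$ gives $\|\partial_x\phi\|_{0,p,K}^p\le Cb^p\int_{\hk}J_K^{-(p-1)}\,d\hx\,d\hy$.

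Finally I would substitute $J_K=ab\,(1+\hx(\bt/b-1)+\hy(\at/a-1))$ from \eqref{jacoFK}, factor out $(ab)^{p-1}$, and recognize the remaining integral as precisely $I_p$ from \eqref{defI}; this yields $Cb^p(ab)^{-(p-1)}I_p=C\,b\,a^{-(p-1)}I_p$, the claimed bound. The $y$-derivative is identical after swapping the roles of the two columns of \eqref{DFK}: the relevant entries $a+\hy(\at-a)=a(1-\hy)+\hy\at$ and $\hx(\at-a)$ are both $\le Ca$ by the bound $\at\le Ca$ in $(\Delta 1)$, producing the factor $a^p$ and hence $C\,a\,b^{-(p-1)}I_p$.

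The genuine obstacle, and really the only nontrivial point, is the $J_K^{-(p-1)}$ bookkeeping: one must resist trying to bound $J_K$ from below (which would fail for nearly degenerate elements) and instead keep it inside the integral, so that the possible anisotropy of $K$ is entirely absorbed into $I_p$. Condition $(\Delta 1)$ is used \emph{only} to control the cofactor entries and never the Jacobian, and it is precisely this separation between the bounded polynomial factors and the singular weight $J_K^{-(p-1)}$ that makes the estimate uniform.
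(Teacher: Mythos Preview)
Your argument is correct and follows essentially the same route as the paper's proof: pull back via $F_K$, use the chain rule with the explicit cofactor matrix of $DF_K$, change variables so that $J_K^{-p}\cdot J_K=J_K^{-(p-1)}$, and invoke $(\Delta 1)$ together with the boundedness of $\nabla\hphi$ to control the numerator. The only cosmetic difference is that the paper first divides through by $b$ (resp.\ $a$) to define bounded auxiliary functions $R$ and $S$ before integrating, whereas you keep the $b$'s explicit and factor out $b^p$ at the end; the bookkeeping is identical. (One minor slip: the entries $a+\hy(\at-a)$ and $\hx(\at-a)$ you list for $\partial_y\phi$ come from the first \emph{row} of $DF_K$, not the first column, but the entries themselves and the subsequent bound are correct.)
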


\begin{proof} Let $\hphi$ the function basis on $\hk$ corresponding to $\phi$, then (from the chain rule) follows that
$$\left( \begin{array}{c}
\ds \left(\frac{\partial \phi}{\partial x} \circ F_K \right) (\hx,\hy)\\
\ds \left(\frac{\partial \phi}{\partial y} \circ F_K \right) (\hx,\hy)
\end{array} \right) =
\frac{1}{J_K(\hx,\hy)}
\left( \begin{array}{ll}
b+\hx(\bt-b) &
-\hy(\bt-b)\\
\\
-\hx(\at-a) &
a+\hy(\at-a)
\end{array} \right)
\left( \begin{array}{c}
\ds \frac{\partial \hphi}{\partial \hx}(\hx,\hy)\\
\\
\ds \frac{\partial \hphi}{\partial \hy}(\hx,\hy)
\end{array} \right)$$

where $J_K(\hx,\hy)=\det(DF_K)=ab(1+\hx(\bt/b-1)+\hy(\at/a-1))$.

Calling $R(\hx,\hy)=(1+\hx(\bt/b-1))\frac{\partial \hphi}{\partial \hx}(\hx,\hy)-\hy(\bt/b-1)\frac{\partial \hphi}{\partial \hy}(\hx,\hy)$ and $S(\hx,\hy)=-\hx(\at/a-1))\frac{\partial \hphi}{\partial \hx}(\hx,\hy)+(1+\hy(\at/a-1))\frac{\partial \hphi}{\partial \hy}(\hx,\hy)$ we have
$$\ds \left(\frac{\partial \phi}{\partial x} \circ F_K \right) (\hx,\hy) = \frac{b}{J_K(\hx,\hy)} R(\hx,\hy)
\quad \mbox{ and }  \quad
\left(\frac{\partial \phi}{\partial y} \circ F_K \right) (\hx,\hy) = \frac{a}{J_K(\hx,\hy)} S(\hx,\hy).$$

By changing variables we get
$$\ds \left\| \frac{\partial \phi}{\partial x} \right\|_{0,p,K}^p =
\frac{b}{a^{p-1}} \int_0^1 \int_0^1  \frac{\left| R(\hx,\hy) \right|^p}{(1+\hx(\bt/b-1)+\hy(\at/a-1))^{p-1}}\ d\hx d\hy$$
and
$$\ds \left\| \frac{\partial \phi}{\partial y} \right\|_{0,p,K}^p =
\frac{a}{b^{p-1}} \int_0^1 \int_0^1 \frac{\left| S(\hx,\hy) \right|^p}{(1+\hx(\bt/b-1)+\hy(\at/a-1))^{p-1}}\ d\hx d\hy,$$

and the proof concludes using that $R$ and $S$ are uniformly bounded since they are polynomials, $0 \leq \hx,\hy \leq 1$ and $0 \leq \at/a,\bt/b \leq C$ by ($\Delta$1). $\Box$
\end{proof}

Previous result provides bounds for any basis function. As we show later basis functions associated to  \emph{internal nodes} require a particular treatment. In particular we have,  

\begin{lemma}
\label{derparc}
If $K=K(a,b,\at,\bt)$ is convex and $(\Delta 1)$ given by \eqref{condDelta1} holds then for any $p \geq 1$ and for any function basis $\phi$ associated to an internal node of $K$, there exists a positive constant $C$ such that
\bq
\label{eq:prima}
\ds \left\| \frac{\partial\phi}{\partial x} \right\|_{0,p,K}^p \leq C \frac{b}{a^{p-1}}
\left[ |1-\bt/b|^p I_p+max\{1,(b/\bt)^{p-1}/2\} \right]
\eq

\bq
\label{eq:seconda}
\ds \left\| \frac{\partial\phi}{\partial y} \right\|_{0,p,K}^p \leq C \frac{a}{b^{p-1}}
\left[ (\at/a)^p I_p+max\{1,(b/\bt)^{p-1}/2\} \right].
\eq
\end{lemma}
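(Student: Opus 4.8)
The plan is to start from the change of variables already carried out in the proof of Lemma \ref{edgesnodes}, which holds for \emph{any} basis function and yields
\bq
\ds\left\|\frac{\partial\phi}{\partial x}\right\|_{0,p,K}^p=\frac{b}{a^{p-1}}\int_0^1\!\!\int_0^1\frac{|R(\hx,\hy)|^p}{D(\hx,\hy)^{p-1}}\,d\hx\,d\hy,\qquad \left\|\frac{\partial\phi}{\partial y}\right\|_{0,p,K}^p=\frac{a}{b^{p-1}}\int_0^1\!\!\int_0^1\frac{|S(\hx,\hy)|^p}{D(\hx,\hy)^{p-1}}\,d\hx\,d\hy,
\eq
where $D=1+\hx(\bt/b-1)+\hy(\at/a-1)$ and $R,S$ are as in that lemma. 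The new ingredient is that $\phi$ is attached to an \emph{interior} node, so $\hphi=\ell_j(\hx)\ell_i(\hy)$ with $1\le i,j\le k-1$, $\ell_r$ being the one–dimensional degree-$k$ Lagrange polynomial at $r/k$. Hence $\ell_i$ and $\ell_j$ vanish at both $0$ and $1$; in particular $|\ell_i(\hy)|\le C(1-\hy)$, and this single fact is what separates the present estimate from Lemma \ref{edgesnodes}.

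First I would split off the terms carrying the geometric factors. Writing $R=\ell_j'\ell_i+(\bt/b-1)\big(\hx\ell_j'\ell_i-\hy\ell_j\ell_i'\big)$, and, after using $1+\hy(\at/a-1)=(1-\hy)+\hy\,\at/a$ together with $-\hx(\at/a-1)=\hx-\hx\,\at/a$, $S=P+\tfrac{\at}{a}G$ with $P=\hx\ell_j'\ell_i+(1-\hy)\ell_j\ell_i'$ and $G=-\hx\ell_j'\ell_i+\hy\ell_j\ell_i'$. Since $\hx\ell_j'\ell_i-\hy\ell_j\ell_i'$ and $G$ are fixed polynomials, bounding them by a constant and using $(s+t)^p\le 2^{p-1}(s^p+t^p)$ produces, against $D^{1-p}$, exactly the terms $C|1-\bt/b|^pI_p$ and $C(\at/a)^pI_p$ of \eqref{eq:prima}–\eqref{eq:seconda}. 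It then remains to bound the two \emph{remainders}
\bq
\int_0^1\!\!\int_0^1\frac{|\ell_j'(\hx)\ell_i(\hy)|^p}{D^{p-1}}\,d\hx\,d\hy\qquad\mbox{and}\qquad \int_0^1\!\!\int_0^1\frac{|P(\hx,\hy)|^p}{D^{p-1}}\,d\hx\,d\hy
\eq
by $C\max\{1,(b/\bt)^{p-1}/2\}$.

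The crux is the pointwise lower bound $D(\hx,\hy)\ge m(1-\hy)$ on $\hk$, where $m=\min\{1,\bt/b\}$. Indeed $D$ is affine in $\hx$, so $\min_{\hx\in[0,1]}D=m+(\at/a-1)\hy=m(1-\hy)+(m+\at/a-1)\hy$, and here $m+\at/a-1>0$: it equals $\at/a$ when $\bt/b\ge1$ and $\bt/b+\at/a-1$ when $\bt/b<1$, the latter being positive by the convexity relation \eqref{convJK}. Hence $D\ge m(1-\hy)$. On the other hand both remainder numerators are $\le C(1-\hy)^p$: for the first this is immediate from $|\ell_i(\hy)|\le C(1-\hy)$, and for $P$ one checks $|\hx\ell_j'\ell_i|\le C(1-\hy)$ (again because $\ell_i(1)=0$) and $|(1-\hy)\ell_j\ell_i'|\le C(1-\hy)$. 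Therefore each remainder is at most
\bq
\frac{C}{m^{p-1}}\int_0^1\!\!\int_0^1\frac{(1-\hy)^p}{(1-\hy)^{p-1}}\,d\hx\,d\hy=\frac{C}{m^{p-1}}\int_0^1(1-\hy)\,d\hy=\frac{C}{2m^{p-1}},
\eq
and since $\tfrac{1}{2m^{p-1}}\le\max\{1,(b/\bt)^{p-1}/2\}$, the two estimates \eqref{eq:prima}–\eqref{eq:seconda} follow after absorbing the $2^{p-1}$ factors into $C$.

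The step I expect to be the real obstacle is the uniform bound $D\ge m(1-\hy)$: it is exactly what lets the small values of the Jacobian near the corner $\hat V_4=(0,1)$ (that is, when $\at/a\to0$) be absorbed by the vanishing of the interior basis function along $\hy=1$, leaving a bound that depends on $\bt/b$ only. Verifying the positivity of $m+\at/a-1$ through \eqref{convJK}, and checking that the remainder numerators genuinely decay like $(1-\hy)$, are the points that require care; everything else is the routine $2^{p-1}$–splitting and the change of variables already available from Lemma \ref{edgesnodes}. \Box
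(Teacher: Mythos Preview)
Your proof is correct and follows essentially the same route as the paper's: split the integrand into a piece carrying the geometric factor $|1-\bt/b|$ (resp.\ $\at/a$), which gives the $I_p$ term, and a remainder whose numerator decays like $(1-\hy)$ thanks to the vanishing of the interior basis function along $\hy=1$; then absorb this remainder using the pointwise lower bound $D\ge \min\{1,\bt/b\}(1-\hy)$. The only cosmetic differences are that the paper factors $\hphi=\hx(1-\hx)\hy(1-\hy)P$ to make the $(1-\hy)$ explicit (whereas you use $|\ell_i(\hy)|\le C(1-\hy)$ from $\ell_i(1)=0$), and it handles the lower bound on $D$ by the two separate cases $\bt/b<1$ and $\bt/b\ge1$ that your choice $m=\min\{1,\bt/b\}$ unifies.
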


\begin{proof} Since $\phi$ is associated to an internal node on $K$, it follows that $\hphi$ is associated to an internal node on $\hk$ so that there exists $P \in \q_{k-2}(\hk)$ such that $\hphi(\hx,\hy)=\hx(1-\hx)\hy(1-\hy)P(\hx,\hy)$. Then
$$\frac{\partial \hphi}{\partial \hx}=\hy(1-\hy)A \quad \mbox{ and } \quad
\frac{\partial \hphi}{\partial \hy}=\hx(1-\hx)B$$

where $A(\hx,\hy)=\frac{\partial}{\partial \hx} \left[ \hx(1-\hx)P(\hx,\hy) \right]$ and $B(\hx,\hy)=\frac{\partial}{\partial \hy} \left[ \hy(1-\hy)P(\hx,\hy) \right]$.

From the chain rule follows that
$$\left( \begin{array}{c}
\ds \left(\frac{\partial \phi}{\partial x} \circ F_K \right) (\hx,\hy)\\
\ds \left(\frac{\partial \phi}{\partial y} \circ F_K \right) (\hx,\hy)
\end{array} \right) =
\frac{1}{J_K(\hx,\hy)}
\left( \begin{array}{ll}
b+\hx(\bt-b) &
-\hy(\bt-b)\\
\\
-\hx(\at-a) &
a+\hy(\at-a)
\end{array} \right)
\left( \begin{array}{c}
\hy(1-\hy)A(\hx,\hy)\\
\\
\hx(1-\hx)B(\hx,\hy)
\end{array} \right)$$

where $J_K(\hx,\hy)=\det(DF_K)=ab(1+\hx(\bt/b-1)+\hy(\at/a-1))$.

Calling $S=\hx\hy[(1-\hx)B-(1-\hy)A]$ and $R=\hy A$ we have
$$\ds \left(\frac{\partial \phi}{\partial x} \circ F_K \right) (\hx,\hy)=
\frac{b}{J_K}\left[ (1-\bt/b)S+(1-\hy)R \right]$$

and by a change of variables we get
$$\ds \left\| \frac{\partial \phi}{\partial x} \right\|_{0,p,K}^p =
\int_0^1 \int_0^1 \frac{b}{a^{p-1}} \frac{\left| (1-\bt/b)S+(1-\hy)R \right|^p}{(1+\hx(\bt/b-1)+\hy(\at/a-1))^{p-1}}\ d\hx d\hy.$$

Using the fact that $S$ and $R$ are uniformly bounded we see that 
$$\ds \left\| \frac{\partial \phi}{\partial x} \right\|_{0,p,K}^p \leq
C \frac{b}{a^{p-1}} \int_0^1 \int_0^1 \frac{|1-\bt/b|^p+(1-\hy)^p }{(1+\hx(\bt/b-1)+\hy(\at/a-1))^{p-1}}\ d\hx d\hy.$$

Then
$$\ds \left\| \frac{\partial \phi}{\partial x} \right\|_{0,p,K}^p \leq
C \frac{b}{a^{p-1}} \left[ |1-\bt/b|^pI_p + \int_0^1 \int_0^1 \frac{(1-\hy)^p }{(1+\hx(\bt/b-1)+\hy(\at/a-1))^{p-1}}\ d\hx d\hy \right].$$

From the convexity of $K$ we have $\at/a+\bt/b-1>0$, hence
$$1+\hx(\bt/b-1)+\hy(\at/a-1) > 1+\hx(\bt/b-1)-\hy\bt/b$$
Assume now that $\bt/b<1$. Since $0 \leq \hx \leq 1$ we conclude $1+\hx(\bt/b-1) \geq \bt/b$ and finally
$$1+\hx(\bt/b-1)+\hy(\at/a-1) \geq \bt/b(1-\hy).$$
Therefore
$$\ds \int_0^1 \int_0^1 \frac{(1-\hy)^p }{(1+\hx(\bt/b-1)+\hy(\at/a-1))^{p-1}}\ d\hx d\hy \leq
\frac{1}{2} \left( \frac{b}{\bt} \right)^{p-1}.$$
On the other hand, if $\bt/b\ge 1$ 

$$1+\hx(\bt/b-1)+\hy(\at/a-1) \ge 
1+\hy(\at/a-1)\ge 1-\hy$$
hence
$$\ds \int_0^1 \int_0^1 \frac{(1-\hy)^p }{(1+\hx(\bt/b-1)+\hy(\at/a-1))^{p-1}}\ d\hx d\hy \leq
1,$$
and \eqref{eq:prima} follows.

Finally, the estimate for $\left\| \frac{\partial \phi}{\partial y} \right\|^p_{0,p,K}$ can be obtained in a similar way from the expression

$$\ds \left(\frac{\partial \phi}{\partial y} \circ F_K \right) (\hx,\hy)=
\frac{a}{J_K}\left[ \at/a S+(1-\hy)\bar{R} \right]$$

where $\bar{R}=\hx(1-\hx)B+\hx R$. $\Box$
\end{proof}
In order to clarify in advance the role of the
term $\frac{b}{\bt}$ in the previous lemma
let us notice the following 
\begin{lemma}
 \label{lema:cortoarriba}
 For any arbitrary convex
 quadrilateral $K(a,b,\at,\bt)$  under condition $[\Delta 1,D2]$ (equiv. $[\Delta 2,D2]$)   there exists another \emph{equivalent}
 element (in the sense of Definition \ref{def:equi}) obeying $[\Delta 1,D2]$ (equiv. $[\Delta 2,D2]$, see Lemma \ref{lema:otracond}) with the same constants and for which  $\frac{\bt}{b}\ge \frac{1}{2}$.
\end{lemma}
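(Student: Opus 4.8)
The plan is to dispose of the trivial case $\frac{\bt}{b}\ge\frac12$ (take $\oK=K$, with $L$ the identity) and, in the remaining case $\frac{\bt}{b}<\frac12$, to \emph{lift} the vertex $V_3=(\at,\bt)$ by a vertical shear that leaves the three other vertices in place, so that the image is again in canonical form. Concretely I would set
\bq
L(x,y)=(x,\,y+\mu x),\qquad \mu=\frac{b/2-\bt}{\at}\ge 0,
\eq
whose (lower-triangular) linear part $B$ has determinant one and fixes $V_1=(0,0)$, $V_2=(a,0)$ and $V_4=(0,b)$, while sending $V_3$ to $(\at,\bt+\mu\at)=(\at,b/2)$. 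Since a shear preserves convexity and orientation ($\det B=1>0$), the element $\oK:=L(K)=K(a,b,\at,b/2)$ is again an admissible convex quadrilateral, and by construction it satisfies $\frac{\bt}{b}=\frac12$, exactly the desired normalization.

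The heart of the matter is to show that $L$ is \emph{bounded} in the sense of Definition \ref{def:equi}, i.e.\ that $\mu$ is controlled by the constants appearing in $[\Delta 1,D2]$; this is where $(D2)$ is indispensable. Writing $\sin\alpha=\frac{J_K(1,1)}{|d_1|\,|l|}$ with $J_K(1,1)=b\at+a\bt-ab>0$, $|d_1|=\sqrt{a^2+b^2}$ and $|l|=\sqrt{\at^2+(b-\bt)^2}$, condition $(D2)$ reads $\sin\alpha\ge\frac1C$. I would then combine the elementary inequalities $J_K(1,1)=b\at-a(b-\bt)<b\at$, $|d_1|\ge b$ and $|l|\ge b-\bt$ to obtain $b\at>\frac1C\,b\,(b-\bt)$, that is $\at>\frac1C(b-\bt)$. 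In the regime $\bt<b/2$ this gives $b-\bt>b/2$, hence $\at>\frac{b}{2C}$, and therefore
\bq
0\le\mu=\frac{b/2-\bt}{\at}\le\frac{b/2}{\at}\le C .
\eq
Consequently $\|B\|$ and $\|B^{-1}\|$ are bounded in terms of $C$ alone, so $L$ realizes a $C$-equivalence.

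It then remains to check that $\oK$ still obeys $[\Delta 1,D2]$ with controlled constants. For $(\Delta 1)$ this is immediate: the two ratios of $\oK=K(a,b,\at,b/2)$ are $\frac{\at}{a}$ (unchanged) and $\frac{b/2}{b}=\frac12$, and convexity together with $\frac{\bt}{b}<\frac12$ forces $\frac{\at}{a}>\frac12$, so $\frac12<\frac{\at}{a}\le C$ and the \emph{same} constant $C$ works. For $(D2)$ I would either recompute directly
\bq
\sin\bar\alpha=\frac{b\at-ab/2}{\sqrt{a^2+b^2}\,\sqrt{\at^2+b^2/4}}
\eq
and bound it below using $\at>\frac{b}{2C}$ together with $\at>\frac a2$, or simply invoke that a $C$-equivalence distorts lengths and angles by factors depending only on $C$, so $(D2)$ survives with a constant governed by the original one.

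The step I expect to be the main obstacle is precisely the boundedness of $\mu$: one has to recognize that $(D2)$ forces $\at$ to be bounded below by a fixed multiple of $b$ exactly in the bad regime $\bt<b/2$, and that this is what prevents the lifting shear from degenerating. Under $(\Delta 1)$ alone $\at$ could be arbitrarily small relative to $b$, the shear would be unbounded, and the statement would fail — so the role of $(D2)$ here is genuinely essential rather than technical.
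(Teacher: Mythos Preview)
Your shear does not do what you claim. You write $L(x,y)=(x,y+\mu x)$ and assert that it fixes $V_2=(a,0)$, but $L(a,0)=(a,\mu a)$, which is \emph{not} $(a,0)$ once $\mu>0$. The only vertices actually fixed by this vertical shear are those with $x$-coordinate zero, namely $V_1$ and $V_4$. Consequently the image $L(K)$ has vertices $(0,0)$, $(a,\mu a)$, $(\at,b/2)$, $(0,b)$ and is \emph{not} of the canonical form $K(a',b',\at',\bt')$ (the second vertex fails to lie on the $x$-axis). Everything downstream --- the identification $\oK=K(a,b,\at,b/2)$, the verification of $(\Delta 1)$ for $\oK$, the recomputation of $\sin\bar\alpha$ --- is built on this false premise.

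You could try to repair the argument by composing with a rotation that brings $(a,\mu a)$ back to the $x$-axis, but then all four parameters change and you would have to redo the bookkeeping; moreover, since the resulting map is no longer rigid, the $(D2)$ constant would only be \emph{controlled by} (not equal to) the original one, so you would not recover the phrase ``with the same constants'' in the statement. The paper avoids all of this by a much simpler device: when $\bt/b<\tfrac12$, convexity \eqref{convJK} forces $\at/a>\tfrac12$, and the rigid reflection $(x,y)\mapsto(y,x)$ sends $K(a,b,\at,\bt)$ to $K(b,a,\bt,\at)$, which is again in canonical form and now has its ``$\bt/b$'' ratio equal to the old $\at/a>\tfrac12$. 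One then only needs to check that the new angle (the old $\beta$ at $V_2$) is controlled; since $\alpha\le\beta\le\pi-\alpha$ in the triangle $V_2V_3V_4$, one gets $\sin\beta\ge\sin\alpha$ and hence literally the same $(D2)$ constant.
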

\begin{proof} Consider the triangle $V_2V_3V_4$ and the angles $\alpha$ and $\beta$ at $V_4$
and $V_2$ respectively. If for the original $K(a,b,\at,\bt)$,   $\frac{\bt}{b}\ge \frac{1}{2}$, then we have nothing to prove. Otherwise $\frac{\bt}{b}< \frac{1}{2}$ and hence we see that $\alpha\le \beta$. On the other hand, since both are interior angles of a triangle, $\beta\le \pi-\alpha$, therefore using $(D2)$ we see that $\beta$ is away from $0$ and $\pi$ and therefore under a rigid movement we can transform our element
into $K(b,a,\bt,\at)$. The resulting element satisfies the required conditions $[\Delta 1,D2]$ with the same constants than those $K(a,b,\at,\bt)$ and the lemma follows thanks to the fact that $\frac{\at}{a}\ge \frac12$ (see \eqref{convJK}). 
 \Box
\end{proof}
\begin{rem}
 \label{rem:alsodac}
 Observe that previous lemma also applies to
 elements $K(a,b,\at,\bt)$ under $[D1,D2]$ since
 $D1\implies \Delta 1$.
\end{rem}

\section{$K(a,b,\at,\bt)$ and different geometric conditions}
In this section we explore in detail how to use the family $K(a,b,\at,\bt)$. The following 
lemma is useful in the rest of this section.
\begin{lemma}
 \label{lema:angles}
 Let $L$ be the linear transformation associated with a matrix $B$. Given
two vectors $w_1$ and $w_2$, let $\alpha$ be the angle between them and let $\overline\alpha$ be the angle between $L(w_1)$ and $L(w_2)$. 
Calling $\kappa(B)$ the condition number of $B$ we have
$$
\frac{2}{\kappa(B)\pi}\alpha\le \overline \alpha \le \pi \left(1-\frac{2}{\kappa(B)\pi}\right)+\alpha \frac{2}{\kappa(B)\pi}.
$$
\end{lemma}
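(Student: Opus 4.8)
The plan is to diagonalize $B$ by a singular value decomposition, reduce the statement to a pure coordinate scaling, write the induced map on angles explicitly, and integrate it.

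First I would write $B=U\Sigma V^{t}$ with $U,V$ orthogonal and $\Sigma=\mathrm{diag}(\sigma_1,\sigma_2)$, $\sigma_1\ge\sigma_2>0$, so that $\kappa(B)=\sigma_1/\sigma_2$. Since orthogonal maps preserve the (unsigned) angle between two vectors, replacing $w_1,w_2$ by $V^{t}w_1,V^{t}w_2$ leaves $\alpha$ unchanged, and applying $U$ afterwards leaves $\overline\alpha$ unchanged; moreover a uniform dilation does not alter angles. Hence I may assume $B=\mathrm{diag}(\kappa,1)$ with $\kappa=\kappa(B)\ge 1$.

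Next I parametrize directions by their angle: the unit vector $(\cos\theta,\sin\theta)$ is sent to $(\kappa\cos\theta,\sin\theta)$, whose direction angle I call $g(\theta)$. A direct computation gives $g'(\theta)=\kappa/(\kappa^{2}\cos^{2}\theta+\sin^{2}\theta)$, so that $1/\kappa\le g'(\theta)\le\kappa$ and $g$ is strictly increasing; since antipodal vectors map to antipodal vectors one also has $g(\theta+\pi)=g(\theta)+\pi$. Orienting the pair so that $\theta_2=\theta_1+\alpha$ with $\alpha\in[0,\pi]$, monotonicity together with $g(\theta_1+\pi)-g(\theta_1)=\pi$ forces the directed difference $g(\theta_2)-g(\theta_1)$ to lie in $[0,\pi]$, whence it equals $\overline\alpha$. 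Integrating the lower bound for $g'$ then yields $\overline\alpha=\int_{\theta_1}^{\theta_1+\alpha}g'(\theta)\,d\theta\ge\alpha/\kappa\ge\tfrac{2}{\pi\kappa}\alpha$, which is the desired lower estimate (in fact with the sharper constant $1/\kappa$).

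Finally, the upper estimate follows by the antipodal symmetry rather than by integrating $g'\le\kappa$. Applying the lower bound to the pair $w_1,-w_2$, whose angle is $\pi-\alpha$ and whose images make the angle $\pi-\overline\alpha$, gives $\pi-\overline\alpha\ge\tfrac{2}{\pi\kappa}(\pi-\alpha)$, and rearranging produces exactly $\overline\alpha\le\pi\bigl(1-\tfrac{2}{\kappa\pi}\bigr)+\alpha\tfrac{2}{\kappa\pi}$. I expect the only real subtlety to be the identification $\overline\alpha=g(\theta_2)-g(\theta_1)$, i.e. checking that this directed difference never exceeds $\pi$; this is precisely where the half-turn equivariance $g(\theta+\pi)=g(\theta)+\pi$ is used. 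An alternative, more computational, route to the lower bound starts from the determinant identity $\sin\overline\alpha=\tfrac{|\det B|\,\|w_1\|\,\|w_2\|}{\|Bw_1\|\,\|Bw_2\|}\sin\alpha\ge\tfrac{1}{\kappa}\sin\alpha$, but converting this into a bound on $\overline\alpha$ itself still requires the monotonicity information above, so I would favor the integral argument.
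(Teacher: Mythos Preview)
Your argument is correct. The paper does not supply its own proof here; it simply cites \cite{AD1} with the remark that the result is elementary, so there is no in-paper approach to compare against. Your SVD reduction to $B=\mathrm{diag}(\kappa,1)$, the computation $g'(\theta)=\kappa/(\kappa^{2}\cos^{2}\theta+\sin^{2}\theta)\in[1/\kappa,\kappa]$, and the integration step are all sound; in fact they yield the sharper inequality $\overline\alpha\ge\alpha/\kappa$, of which the stated lower bound is a harmless weakening (since $1\ge 2/\pi$). The complementary-pair trick for the upper bound is the right move, and your use of the half-turn equivariance $g(\theta+\pi)=g(\theta)+\pi$ to ensure that the directed difference $g(\theta_2)-g(\theta_1)$ stays in $[0,\pi]$, and hence coincides with the unsigned angle $\overline\alpha$, is exactly the point that needs checking. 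Nothing is missing.
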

\begin{proof} The proof is elementary and can be found in \cite{AD1}. \Box
\end{proof}

\subsection{The $RDP$ and the family $K(a,b,\at,\bt)$} 
In order to characterize the elements under the $RDP$ we begin with the following elementary 
result 
\begin{lemma}
\label{lema:losRDP}
 Let $K$ be an element of the type
 $K(a,b,\at,\bt)$  and assume $[\Delta 1, D2]$
 (equivalently $[\Delta 2, D2]$). Then $K$ verifies the $RDP$ with constants depending only on those given in conditions $[\Delta 1, D2]$.
\end{lemma}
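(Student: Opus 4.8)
The plan is to cut $K$ along the diagonal $d_1$ already singled out in the definition of the family (the segment $\overline{V_2V_4}$) and to check directly the three requirements in the definition of $RDP$: control of the ratio $|d_2|/|d_1|$ of the remaining diagonal, and a uniform bound away from $\pi$ for the maximal angle of each of the two triangles produced by the cut.

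The diagonal ratio is immediate from the coordinates $V_1=(0,0)$, $V_2=(a,0)$, $V_3=(\at,\bt)$, $V_4=(0,b)$: one has $|d_1|=|V_2V_4|=\sqrt{a^2+b^2}$ and $|d_2|=|V_1V_3|=\sqrt{\at^2+\bt^2}$, so $(\Delta 1)$ (i.e. $\at\le Ca$, $\bt\le Cb$) gives $\at^2+\bt^2\le C^2(a^2+b^2)$ and hence $|d_2|/|d_1|\le C$. Thus we may take $N=C$.

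It remains to bound the maximal angles of the two triangles $T_2=\Delta(V_1,V_2,V_4)$ and $T_1=\Delta(V_2,V_3,V_4)$. The triangle $T_2$ requires no hypothesis at all: its angle at $V_1$ is exactly $\pi/2$ because $\overline{V_1V_2}$ and $\overline{V_1V_4}$ lie on the coordinate axes, so its maximal angle is $\pi/2<\pi$. For $T_1$ I would reproduce the law-of-sines argument already used in the proof of Lemma \ref{lema:otracond}. Condition $(D2)$ forces $\sin\alpha\ge 1/C$ for the angle $\alpha$ at $V_4$, so $\alpha$ is bounded away from $0$ and $\pi$. Since $\frac{\sin\alpha}{\sin\beta}=\frac{|V_2V_3|}{|d_1|}\le C$ for the angle $\beta$ at $V_3$ (the inequality being exactly the consequence of $(\Delta 1)$ noted there), we obtain $\sin\beta\ge 1/C^2$, so $\beta$ is also bounded away from $0$ and $\pi$. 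Finally the angle $\gamma$ at $V_2$ equals $\pi-\alpha-\beta$, and since $\alpha$ and $\beta$ are bounded below by a positive constant, $\gamma$ is bounded away from $\pi$.

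Putting this together, every angle of $T_1$ is at most some $\psi<\pi$ depending only on $C$, while the maximal angle of $T_2$ is $\pi/2$; taking $\psi_M$ to be the larger of $\pi/2$ and $\psi$, both triangles obey the maximal-angle bound, and together with $|d_2|/|d_1|\le C$ this yields $RDP(N,\psi_M)$ with $N$ and $\psi_M$ depending only on the constants of $[\Delta 1,D2]$. I do not anticipate a real obstacle: the only delicate point is the lower bound on $\sin\beta$, i.e. the control of the angle at $V_3$, but that is precisely the computation already carried out for Lemma \ref{lema:otracond}, and the built-in right angle at $V_1$ makes $T_2$ automatic.
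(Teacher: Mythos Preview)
Your proof is correct and follows exactly the approach indicated in the paper: the paper's own proof is the single line ``Follows straightforwardly taking $d_1=\overline{V_2V_4}$ as the dividing diagonal,'' and you have supplied precisely the straightforward details behind that sentence. The diagonal-ratio bound via $(\Delta 1)$, the automatic right angle at $V_1$ for $T_2$, and the law-of-sines control on the angle at $V_3$ in $T_1$ (borrowed from the proof of Lemma~\ref{lema:otracond}) are all exactly what is needed.
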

\begin{proof}
 Follows straightforwardly taking $d_1=\overline{V_2V_4}$ as the dividing diagonal. \Box
\end{proof}

Now we are ready for the following characterization

\begin{theorem}
 \label{teo:carrdp}
Let $K$ be a general convex quadrilateral. Then
$K$ verifies the $RDP$ if and only if  $K$ is \emph{equivalent} to some $K(a,b,\at,\bt)$ under
$[\Delta 2, D2]$ (equiv. $[\Delta 1, D2]$).
  \end{theorem}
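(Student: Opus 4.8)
The plan is to prove the two implications separately: the ``if'' direction as a stability (invariance) statement, and the ``only if'' direction as an explicit construction of an equivalent reference element.

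\emph{($\Leftarrow$)} Suppose $K$ is equivalent to some $K(a,b,\at,\bt)$ satisfying $[\Delta 2,D2]$, which by Lemma~\ref{lema:otracond} is the same as $[\Delta 1,D2]$. By Lemma~\ref{lema:losRDP} the element $K(a,b,\at,\bt)$ itself verifies the $RDP$, with constants controlled by those in $[\Delta 1,D2]$. It then suffices to observe that the $RDP$ is stable under $C$-equivalence: if $L(X)=BX+P$ has $\|B\|,\|B^{-1}\|\le C$, then $L$ distorts the ratio $|d_2|/|d_1|$ of the two diagonals by at most $\kappa(B)$, and by Lemma~\ref{lema:angles} it maps the maximal angle of each of the two sub-triangles to an angle still bounded away from $\pi$. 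Hence the image decomposes along the image of $d_1$ into two triangles with bounded $|d_2|/|d_1|$ and maximal angle bounded away from $\pi$, so $K$ verifies the $RDP$.

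\emph{($\Rightarrow$)} Suppose $K$ verifies $RDP(N,\psi_M)$, with dividing diagonal $d_1$, companion diagonal $d_2$, and triangles $T_1,T_2$ of maximal angle $\le\psi_M$. The first task is to fix the labeling $V_1,\dots,V_4$ (counterclockwise, reflecting if necessary) producing the reference element. I would place the two endpoints of $d_1$ at the future $V_2,V_4$ and the two endpoints of $d_2$ at $V_1,V_3$, and then choose the orientation so that the side $l=\overline{V_3V_4}$ is the shortest side of $K$; this is always possible since every side of a convex quadrilateral joins an endpoint of $d_1$ to an endpoint of $d_2$, and it forces $(\Delta 2)$ with constant $1$. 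The essential geometric input is that the interior angle at $V_1$ is then controlled: it is $\le\psi_M$, being the free angle of $T_2$, and it is bounded below because the heights $h_1,h_3$ of $V_1,V_3$ over the line of $d_1$ satisfy $h_1+h_3\le|d_2|\le N|d_1|$, so no free vertex lies at distance $\gg|d_1|$ from $d_1$; combined with the maximal-angle bound on $T_2$ this keeps the angle at $V_1$ away from $0$ and $\pi$ with a constant $c(N,\psi_M)$.

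Next I would introduce the affine map $L$ sending $V_1$ to the origin and straightening the (controlled) angle at $V_1$ to a right angle, scaling the two incident edges so their lengths are preserved; this makes $L(K)$ an element of the family $K(a,b,\at,\bt)$ with $a=|V_1V_2|$, $b=|V_1V_4|$ and $(\at,\bt)=L(V_3)$. The key remark is that the linear part $B$ acts only on the \emph{directions} of the two edges at $V_1$ (it maps the unit vectors along them to $e_x,e_y$), so $\kappa(B)$ depends only on the angle at $V_1$ and \emph{not} on the possibly very different lengths $a,b$. Since that angle lies in $[c(N,\psi_M),\psi_M]$ we obtain $\kappa(B)\le C(N,\psi_M)$, whence $K\comp L(K)$ in the sense of Definition~\ref{def:equi}.

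Finally I would verify that $L(K)$ satisfies $[\Delta 2,D2]$ (equivalently $[\Delta 1,D2]$ by Lemma~\ref{lema:otracond}). Condition $(\Delta 2)$ holds by the labeling choice. For $(D2)$ I would use the elementary identity $\sin\alpha=h_3/|l|$, where $\alpha$ is the angle of $T_1$ at $V_4$ and $h_3$ is the distance from $V_3$ to the line of $d_1$, so that $(D2)$ reduces to $h_3\ge c\,|l|$. The main obstacle is precisely this reduction: the maximal-angle bound on $T_1$ forces $V_3$ away from the line of $d_1$ (a lower bound on $h_3$), the labeling makes $|l|$ the shortest side, and combining these requires controlling the anisotropy, i.e.\ the ratio $b/a$ may tend to $0$ or $\infty$. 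Here I expect to split into the cases $b/a\to0$, $b/a\comp1$, $b/a\to\infty$ exactly as in the proof of Lemma~\ref{lema:otracond}, normalizing by Lemma~\ref{lema:cortoarriba} so that $\bt/b\ge\tfrac12$. Once $[\Delta 2,D2]$ is established for $L(K)$ with constants depending only on $N$ and $\psi_M$, the equivalence $K\comp L(K)$ completes the proof.
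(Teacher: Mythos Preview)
Your overall strategy matches the paper's: the $(\Leftarrow)$ direction via Lemma~\ref{lema:losRDP} plus stability of the $RDP$ under $C$-equivalence is exactly what is done there, and in the $(\Rightarrow)$ direction your labeling (shortest side as $\overline{V_3V_4}$, angle at $V_1$ controlled, shear to a right angle) is the paper's construction as well. The paper writes the linear part explicitly as the shear $B=\begin{pmatrix}1&\cot\beta\\0&1\end{pmatrix}$, which makes the bound $\|B\|,\|B^{-1}\|\le C(N,\psi_M)$ immediate once $\beta$ (your angle at $V_1$) is shown to lie in a compact subinterval of $(0,\pi)$; your height argument for the lower bound on $\beta$ is replaced in the paper by a one-line application of the law of sines in $T_2$, using that $d_1$ is comparable to the longest side of $T_2$ because $|d_2|\le N|d_1|$.

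The genuine gap is your verification of $(D2)$. You propose to work directly in the reference element via $\sin\alpha=h_3/|l|$, then run a case analysis on $b/a$ and invoke Lemma~\ref{lema:cortoarriba}. But Lemma~\ref{lema:cortoarriba} \emph{assumes} $[\Delta 1,D2]$, so appealing to it while proving $(D2)$ is circular; and without it your case split is only a sketch. The paper avoids all of this by bounding the angle in the \emph{original} element $K$ and then transferring: in $T_1$ the side $s=\overline{V_3V_4}$ is the shortest side of $K$, hence $|s|\le|\overline{V_2V_3}|$, so the angle at $V_4$ (opposite $\overline{V_2V_3}$) is at least the angle at $V_2$ (opposite $s$); together with $MAC(\psi_M)$ on $T_1$ this gives $\alpha\in[(\pi-\psi_M)/2,\psi_M]$. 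A single application of Lemma~\ref{lema:angles} then carries this two-sided bound over to the reference element, yielding $(D2)$ with no case analysis at all. Replacing your last paragraph by this argument closes the proof.
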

\begin{proof}
First we assume that $K$ is equivalent to some 
$K(a,b,\at,\bt)$ under
$[\Delta 2, D2]$. From Lemma \ref{lema:losRDP}
we know that $K(a,b,\at,\bt)$ verifies $RDP(N,\psi_M)$ with constants bounded in terms of those given in $[\Delta 2, D2]$. Since
$K=L(K(a,b,\at,\bt))$ for certain affine mapping $Lx=Bx+P$ of the type considered in the Definition \ref{def:equi} we see from 
Lemma \ref{lema:angles} and taking  into account that such an $L$ preserves lengths (up to a constant depending on $\|B\|,\|B^{-1}\|<C$) that
$K$ verifies the $RDP$ with constants depending on $L$ as well as the $RDP$ constants associated to $K(a,b,\at,\bt)$. To show the other implication  we follow \cite{AD2}. Assume that  $K$ satisfies the $RDP$ and divide it along $d_1$ into $T_1$ and $T_2$  in such a way that all their interior angles are bounded by $\psi_M$, while $\frac{|d_2|}{|d_1|}\le N$. We choose the notation in such a way that the shortest side of $K$, called $s$, is one of the sides
of $T_1$ and call $\beta$ the angle of $T_2$ opposite to $d_1$ . After a rigid movement we can assume
that the vertex $V_1$ corresponding to $\beta$ is placed at the origin and that $K$ is contained in the upper half-plane. We can also assume that 
$V_2$ is placed at the point $(a,0)$ with $a>0$,
being the side $\overline{V_1V_2}$ opposite to the shortest side of $K$. Define now $b=|\overline{V_1V_4}|\sin(\beta)$. Then, we have that $V_4$ is placed at $(\cot( \beta) b, b)$. Let us notice that  $\beta$ is away from $\pi$, since $\beta <\psi_M$. Moreover, it is also away from $0$ as one can see by means of the law of sines and taking into account that  $d_1$ (the side of $T_2$ opposite to $\beta$) is comparable to the largest side of $T_2$ (due to the fact that $|d_2|\le N |d_1|$ and recalling that the diameter of $K$  agrees with the length of the longest diagonal).  
Then the linear mapping $L$
associated to the matrix $B=\begin{pmatrix}
1 & \cot(\beta)\\
0&1 
\end{pmatrix}$ performs the desired transformation $L(K(a,b,\at,\bt))=K$ while it fulfills the requirements of Definition \ref{def:equi} as $\|B\|,\|B^{-1}\|<C$ (with
$C$ depending on $\psi_M,N$).
In particular  $\kappa(B)<\frac{2}{\sin^2(\beta)}$. On the other hand, calling $L(\at,\bt)=V_3$ we observe that $(\Delta 2)$ holds
since $L$ preserves lengths (up to constants depending on  $\|B\|,\|B^{-1}\|$ and $\overline{V_3V_4}=L(s)$. On the other hand, since $T_1$ verifies $MAC(\psi_M)$ and $s$ is the
shortest side of $T_1$ then the angle of $T_1$ placed at the common vertex of $d_1$ and $s$ is away from $0$ and $\pi$. Therefore $(D2)$ holds
thanks to Lemma \ref{lema:angles}. The theorem follows.   \Box
 \end{proof}
From now on (see Lemma \ref{comperror}) we assume
that any element verifying the $RDP$ is of the kind  $K(a,b,\at,\bt)$ under $[\Delta 2, D2]$ (equiv. $[\Delta 1, D2]$). In \cite{AM} it is proved that the $RDP$ is sufficient to get optimal order  error estimates in $W^{1,p}$ \emph{for} $Q_1$ whenever $1\le p <3$. 
In the last section we give a counterexample showing in particular that this result does not hold for $k\ge 2$. 

The next result, borrowed from \cite{AM},  help us to shorten our exposition playing also a role in the construction of a counterexample. 

\begin{lemma}
\label{lemaI}
Let $K=K(a,b,\at,\bt)$ a convex quadrilateral. Assume $[\Delta 2, D2]$ (equiv. $[\Delta 1, D2]$), then for any $1\le p < 3$
\bq
\label{I}
\ds \max \left\{ \frac{a}{b^{p-1}}, \frac{b}{a^{p-1}} \right\} I_p \leq C \frac{h}{|l|^{p-1}}
\eq

with $C$ a constant depending only on those given in  $[\Delta 2, D2]$ and $p$.
\end{lemma}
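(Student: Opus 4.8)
The plan is to strip off the $a,b$--dependence first and reduce the whole inequality to a pure estimate on the integral $I_p$ from \eqref{defI}, which depends only on the ratios $\at/a$ and $\bt/b$. First I would record two elementary facts: $\max\{a/b^{p-1},b/a^{p-1}\}=\max(a,b)/\min(a,b)^{p-1}$ (compare $a^p$ with $b^p$), and $h\comp\sqrt{a^2+b^2}\comp\max(a,b)$, since $\overline{V_2V_4}$ has length $\sqrt{a^2+b^2}$ while $(\Delta 1)$ makes every pairwise distance $\le C\sqrt{a^2+b^2}$. Substituting these into \eqref{I} and cancelling the common factor $\max(a,b)\comp h$, the claim becomes equivalent to
\[
I_p\ \le\ C\left(\frac{\min(a,b)}{|l|}\right)^{p-1}.
\]
Moreover, since the relabelling $K(a,b,\at,\bt)\mapsto K(b,a,\bt,\at)$ leaves $I_p$, the quantity $\mu:=\at/a+\bt/b-1>0$ (see \eqref{convJK}), and all the lengths involved unchanged up to the equivalence constants, Lemma \ref{lema:cortoarriba} lets me assume throughout that $\bt/b\ge 1/2$.

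The second step is to read $(D2)$ as a statement about $\mu$. Writing $|l|^2=\at^2+(b-\bt)^2$ and computing the area of the triangle $V_2V_3V_4$ in two ways, namely $\tfrac12\sqrt{a^2+b^2}\,|l|\sin\alpha$ against the determinant $\tfrac12\,ab\,\mu$, gives
\[
\sin\alpha=\frac{ab\,\mu}{\sqrt{a^2+b^2}\,|l|}.
\]
Since $ab/\sqrt{a^2+b^2}\comp\min(a,b)$ for every $a,b$, condition $(D2)$ says precisely that $\min(a,b)\,\mu/|l|\ge 1/C$, i.e. $(\min(a,b)/|l|)^{p-1}\ge C^{-1}\mu^{-(p-1)}$. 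Hence it suffices to prove the purely analytic bound
\[
I_p\ \le\ C\,\mu^{-(p-1)},\qquad \tfrac12\le \bt/b,\quad \at/a,\ \bt/b\le C .
\]

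For the last step I would estimate $I_p$ by iterated integration. Because $\bt/b\ge 1/2$, the factor $1+\hx(\bt/b-1)$ stays in $[1/2,C]$, so the only direction in which the integrand can blow up is $\hy$; integrating in $\hy$ first turns the double integral into a one--dimensional integral of $g(\hx,1)^{-(p-2)}=(\at/a+\hx(\bt/b-1))^{-(p-2)}$ (times a bounded factor), which is integrable near its zero \emph{precisely because} $p-2<1$, i.e. $p<3$ --- this is exactly where the range of $p$ enters. I would then split according to the sign of $\at/a-1$: if $\at/a\ge 1$ the denominator is bounded below and $I_p\le C$; if $\at/a<1$, the reduction $\bt/b\ge 1/2$ confines the degeneracy to a neighbourhood of the edge $\hy=1$, where the minimal value of $g$ is comparable to $\mu$, and a direct evaluation of the remaining one--dimensional integral yields $I_p\le C\mu^{-(p-1)}$, closing the argument together with the previous paragraph and Lemma \ref{lema:otracond}.

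The hard part will be this uniform integral estimate. The singularity of the integrand interpolates between a genuine corner (codimension two, hence integrable and even \emph{bounded} for $p<3$) and a whole edge (codimension one, which is what actually produces the $\mu^{-(p-2)}$ growth) as the two slopes $\at/a-1$ and $\bt/b-1$ degenerate, and one must control this transition uniformly in $\at/a,\bt/b$ rather than with crude pointwise bounds, which lose the cancellation near $\at/a=1$. The constant obtained in this way necessarily deteriorates as $p\to 3^-$, in agreement with the sharpness of the range $1\le p<3$ announced in the introduction.
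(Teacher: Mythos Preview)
The paper does not give a self-contained proof of this lemma; it simply cites \cite{AM}, where the identical inequality is established (their Lemma 3.5 together with equations (15)--(16)). So there is no argument here to compare against yours line by line.

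Your route is sound and, in fact, mirrors exactly what the present paper does a few lines later for the $DAC$ case (see \eqref{eq:IPintermsofsin} and Lemma~\ref{lemaIparadac}): compute the area of $\Delta(V_2V_3V_4)$ in two ways to obtain $\sin\alpha=ab\mu/(\sqrt{a^2+b^2}\,|l|)$, observe $ab/\sqrt{a^2+b^2}\comp\min(a,b)$, and use $(D2)$ to reduce \eqref{I} to the purely analytic bound $I_p\le C\mu^{-(p-1)}$. Under $[D1]$ that last bound is a one-line pointwise estimate (the minimum of $g$ on $\hk$ \emph{is} $\mu$), valid for every $p$; under the weaker $[\Delta 1]$ it is no longer pointwise and one must integrate, which is where $p<3$ enters. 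This is precisely the extra work that \cite{AM} carries out and that your third paragraph sketches.

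One small correction to your sketch: the sentence ``the minimal value of $g$ is comparable to $\mu$'' is not accurate when $\bt/b>1$. In that regime the minimum of $g$ on $\hk$ (and on the edge $\hy=1$) sits at the corner $(0,1)$ and equals $\at/a$, which can be much smaller than $\mu$. This does not break the argument, because the singularity is then of genuine corner (codimension-two) type --- the gradient of $g$ has two nonzero components of comparable size --- and the iterated integral is bounded outright for $p-1<2$, while $\mu\ge\bt/b-1+\at/a\ge 0$ stays bounded below by a constant when $\bt/b\ge1$ is bounded away from $1$. The delicate transition you describe (corner degenerating to edge) is confined to the window $\bt/b\in[1/2,1]$, where the vertex minimum \emph{is} $g(1,1)=\mu$, and there your description is accurate. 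With that caveat, the outline is correct and the constant you obtain indeed behaves like $1/(3-p)$, matching Remark~\ref{rem:p<3}.
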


\proof 
See the proof of Lemma 3.5  in \cite[pag. 140]{AM} together with eqs. (15) and (16) in that paper. 
In the mentioned lemma  it is precisely the expression \eqref{I} what is proved.  $K(a,b,\at,\bt)$ and $I_p$ have the same meaning that
in the present work (see also \cite[pag. 136]{AM}, where the invoked hypotheses (H1),(H2),(H3) and (H4) are introduced and derived from the RDP condition.)  $\Box$\\
 
\begin{rem}
\label{rem:p<3}
 Although it is not written here we know from \cite{AM} that the constant $C$ in
 \eqref{I} may behave like $1/(3-p)$. In order
 to get an uniform bound for $3\le p$ it is necessary to restrict the class of the underlying reference elements $K=K(a,b,\at,\bt)$. Later we show 
 that \eqref{I} holds for any $1\le p$ if we work with the family $K(a,b,\at,\bt)$ associated to the $DAC$.  
\end{rem}

\subsection{The regularity condition $h/\rho<\sigma$ and the family $K(a,b,\at,\bt)$} 
For dealing with regular elements we need to introduce a new geometrical condition associated to the class $K(a,b,\at,\bt)$,
\begin{equation}
 \label{condD3}
 \mbox{Condition ($D$3): } \qquad a\comp b.
\end{equation} 
 
\begin{theorem}
\label{teo:carreg}
 Let $K$ be a general convex quadrilateral. Then $K$ is regular (in the sense of \eqref{regcond}) if and only 
 it is equivalent to  some $K(a,b,\at,\bt)$ under $[\Delta 2, D2, D3]$ (equiv. $[\Delta 1, D2, D3]$).
\end{theorem}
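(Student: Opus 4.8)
The plan is to reduce both implications to the reference element $K(a,b,\at,\bt)$, relying on the fact that regularity in the sense of \eqref{regcond} is invariant (up to a change of the constant $\sigma$) under the affine maps of Definition \ref{def:equi}. This invariance is elementary: if $K=L(\oK)$ with $L(X)=BX+P$ and $\|B\|,\|B^{-1}\|<C$, then $L$ scales the diameter by at most $\|B\|$ and carries a disk of radius $r$ inside $\oK$ onto an ellipse containing a disk of radius $r/\|B^{-1}\|$; hence $h/\rho$ is comparable for $K$ and $\oK$, with constants depending only on $\kappa(B)$. It therefore suffices to decide which elements $K(a,b,\at,\bt)$ are regular and to match this with $[\Delta 1, D2, D3]$, which is equivalent to $[\Delta 2, D2, D3]$ by Lemma \ref{lema:otracond} (adding $(D3)$ to both hypothesis packages).

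For the forward implication I would start from $K$ regular, invoke Remark \ref{rem:regimplrdp} to obtain the $RDP$, and then use Theorem \ref{teo:carrdp} to replace $K$ by an equivalent representative $K(a,b,\at,\bt)$ under $[\Delta 1, D2]$; by the invariance above this representative is still regular. The only point left is $(D3)$, i.e. $a\comp b$. The key observation is that the maximal inscribed disk, of diameter $\rho$, projects onto a segment of length $\rho$ on each coordinate axis, so the $x$- and $y$-extents of $K$ are both at least $\rho$. Since the vertices have $x$-coordinates in $\{0,a,\at\}$ and $y$-coordinates in $\{0,b,\bt\}$, condition $(\Delta 1)$ gives extents $\max\{a,\at\}\le Ca$ and $\max\{b,\bt\}\le Cb$. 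Combining $\rho\le Ca$ and $\rho\le Cb$ with $\rho\ge h/\sigma\ge \max\{a,b\}/\sigma$ (the sides $\overline{V_1V_2}$, $\overline{V_1V_4}$ have lengths $a$, $b$) yields at once $b\le C\sigma a$ and $a\le C\sigma b$, that is $a\comp b$. Hence the representative satisfies $[\Delta 1, D2, D3]$.

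For the converse I may assume $K=K(a,b,\at,\bt)$ under $[\Delta 1, D3]$, which is implied by $[\Delta 2, D2, D3]$ (recall $(\Delta 2)\Rightarrow(\Delta 1)$), and must verify $h/\rho\le C$. The triangle with vertices $V_1,V_2,V_4$ is contained in $K$ by convexity; it has a right angle at $V_1=(0,0)$ and legs of lengths $a$ and $b$, so $(D3)$ forces its remaining angles $\arctan(b/a)$ and $\arctan(a/b)$ to stay away from $0$ and $\pi$. Thus this triangle obeys $mac(\psi_m)$ with $\psi_m=\psi_m(C)>0$, and a triangle satisfying a minimum angle condition is regular; it therefore contains a disk of radius $\ge c\,a$, which lies in $K$ and gives $\rho\ge c\,a$. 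On the other hand $(\Delta 1)$ together with $a\comp b$ bounds every pairwise vertex distance by $Ca$, so $h\le Ca$ and $h/\rho\le C/c$.

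I expect the single genuinely delicate point to be the affine-invariance of regularity used at the very beginning, since it is what legitimises passing to the reference family in both directions; after that, each implication is a short estimate in the parameters $a,b,\at,\bt$. It is worth recording that the converse actually uses only $[\Delta 1, D3]$ and never $(D2)$: condition $(D2)$ enters the statement only through the representative produced by Theorem \ref{teo:carrdp} in the forward direction, and is kept so that the hypothesis package sits uniformly on top of the $RDP$ characterization $[\Delta 2, D2]$.
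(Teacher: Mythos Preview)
Your proof is correct and follows essentially the same route as the paper: regularity $\Rightarrow$ $RDP$ $\Rightarrow$ a representative $K(a,b,\at,\bt)$ under $[\Delta 1,D2]$ via Theorem~\ref{teo:carrdp}, then regularity plus $(\Delta 1)$ forces $(D3)$; conversely, $(D3)$ makes $T(a,b)$ regular, its inscribed disk sits in $K$, and $(\Delta 1)$ with $(D3)$ bounds $h$. You simply spell out the steps the paper labels ``easy to see'' (the inscribed-disk projection argument for $(D3)$ and the explicit $h\le Ca$, $\rho\ge ca$ bounds), and your closing remark that the converse never uses $(D2)$ is correct and worth noting.
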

\begin{proof}
Thanks to Remark \ref{rem:regimplrdp} we know that
elements satisfying the regularity condition \eqref{regcond} satisfy also 
the $RDP$. Therefore from Theorem \ref{teo:carrdp} we see that $K$ can be  mapped, by means of an affine transformation $LX=BX+P$ (see Definition \ref{def:equi}) into an element $K(a,b,\at,\bt)$. This reference element should be regular  since $\|B\|,\|B^{-1}\|<C$ and now it is easy to see that $(\Delta 1)$ implies $(D3)$. To prove the other implication
take an element $K(a,b,\at,\bt)$ under $[\Delta 2, D2, D3]$ (equiv. $[\Delta 1, D2, D3]$). Thanks to $(D3)$ we have that $T(a,b)$ is a  regular
triangle. Now it is easy to see that this together with $[\Delta 1, D2]$ implies that $K(a,b,\at,\bt)$  verifies \eqref{regcond}. As a consequence,
any element of the form $L(K(a,b,\at,\bt))$ is regular for any affine mapping
of the kind considered in Definition \ref{def:equi}.  \Box
 \end{proof}
\subsection{The $DAC$ and the family $K(a,b,\at,\bt)$}
As it is mentioned before $DAC$ implies the $RDP$, and as consequence Theorem  \ref{teo:carrdp} says that any element under the  $DAC$ can be mapped into an element $K(a,b,\at,\bt)$ for which $[\Delta 1, D2]$ holds. Nevertheless, the following result, partially borrowed from \cite{AM}, states that actually we may assume $\frac{\at}{a}, \frac{\bt}{b} \leq 1$, and this, as we show later, not only simplifies the treatment of the error but also allows to deal with the case $1\le p$.  

\begin{theorem}
\label{teo:cardac}
Let $K$ be a general convex quadrilateral.  Then $K$ satisfies the $DAC(\psi_m,\psi_M)$ if and only if it is equivalent to an element $K(a,b,\at,\bt)$ under $[D1,D2]$.
\end{theorem}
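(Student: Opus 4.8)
The statement is purely geometric---it does not involve the degree $k$---so the plan is to characterise, up to the affine equivalence of Definition \ref{def:equi}, exactly which convex quadrilaterals can be put in the canonical form $K(a,b,\at,\bt)$ with $\at/a,\bt/b\le 1$ and $1/\sin\alpha\le C$. I would treat the two implications separately, proving the ``if'' part first since it is the more transparent one.

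For the ``if'' direction assume $K$ is equivalent to some $K(a,b,\at,\bt)$ under $[D1,D2]$. By Lemma \ref{comperror} the affine map relating them has bounded condition number, so by the angle--distortion estimate of Lemma \ref{lema:angles} every interior angle of $K$ stays bounded away from $0$ and $\pi$ as soon as the same holds for the corresponding angle of the reference element. Hence it suffices to bound the four interior angles of $K(a,b,\at,\bt)$ in terms of the constant $C$ of $(D2)$. The angle at $V_1$ is exactly $\pi/2$. At $V_2$ and $V_4$ one computes $\cos(\angle V_2)=(a-\at)/|V_2V_3|$ and $\cos(\angle V_4)=(b-\bt)/|V_4V_3|$, both nonnegative by $(D1)$, so these angles are $\le\pi/2$; in particular the splitting angles $\alpha=\angle V_2V_4V_3$ and $\beta=\angle V_4V_2V_3$ of $\Delta(V_2,V_3,V_4)$ are $\le\pi/2$ as well. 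For the lower bounds I would reuse the law--of--sines computation of Lemma \ref{lema:otracond}: $(D2)$ gives $\sin\alpha\ge 1/C$, while $\sin\alpha/\sin\beta=|V_2V_3|/|d_1|\le C$ (a consequence of $(\Delta 1)$, which $(D1)$ implies) gives $\sin\beta\ge 1/C^2$; since $\angle V_2\ge\beta$ and $\angle V_4\ge\alpha$, both are bounded away from $0$. Finally $\angle V_3=\angle V_2V_3V_4=\pi-\alpha-\beta$ is bounded away from $\pi$ by these two lower bounds, and away from $0$ because $\angle V_3\ge\pi/2$ (the four angles sum to $2\pi$ and the other three are $\le\pi/2$). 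This yields $DAC$ for the reference element, hence for $K$.

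For the ``only if'' direction assume $K$ satisfies $DAC(\psi_m,\psi_M)$. The key observation is that the two canonical coordinates are affine invariants once a vertex is singled out as $V_1$: writing $V_3-V_1=\lambda(V_2-V_1)+\mu(V_4-V_1)$ one has $\at/a=\lambda$ and $\bt/b=\mu$, and a short computation with the diagonal intersection point $O$ expresses $\lambda,\mu$ through the ratios $t=|V_1O|/|V_1V_3|$ and $s=|V_2O|/|V_2V_4|$ as $\lambda=(1-s)/t$, $\mu=s/t$. I would therefore choose $V_1$ to be the endpoint, farther from $O$, of whichever diagonal is divided by $O$ in the more unbalanced ratio; then $t\ge\max(s,1-s)$ and both $\lambda,\mu\le1$, i.e.\ $(D1)$ holds. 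The straightening map sending the two edges at $V_1$ onto the axes is a shear whose condition number is controlled by $\sin(\angle V_1)$, hence bounded because $\angle V_1\in[\psi_m,\psi_M]$; this realises $K$ as equivalent to a genuine $K(a,b,\at,\bt)$. It remains to verify $(D2)$: by Lemma \ref{lema:angles} the reference element inherits $mac(\psi_m')$ and $MAC(\psi_M')$, and if $\alpha=\angle V_2V_4V_3$ were arbitrarily small then $V_3$ would approach the line carrying $d_1$, forcing $\angle V_3$ toward $0$ or $\pi$ and violating $mac$ or $MAC$; thus $\sin\alpha$ is bounded below, which is $(D2)$. (Alternatively one may first invoke Remark \ref{rem:dacimplrdp} and Theorem \ref{teo:carrdp} to land in $[\Delta1,D2]$ and then only upgrade $(\Delta1)$ to $(D1)$.)

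The step I expect to be most delicate is exactly this coordination in the forward direction: guaranteeing that the single vertex chosen to enforce $(D1)$ simultaneously keeps the condition number and the angle $\alpha$ under control. The affine--invariance computation for $\lambda,\mu$ is what makes the choice of $V_1$ canonical and independent of the particular shear, and the joint use of both halves of $DAC$---$MAC$ to keep $V_3$ off the diagonal and $mac$ to keep it from escaping along it---is what rescues $(D2)$; arranging all three conditions for one and the same configuration, rather than for three different ones, is the crux. Since none of this depends on the interpolation degree, the resulting characterisation coincides with the $k=1$ one of \cite{AM}.
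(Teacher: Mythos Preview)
Your ``if'' direction follows the paper's argument and is essentially correct, but there is a slip in the law-of-sines step. You set $\beta=\angle V_4V_2V_3$ (the angle at $V_2$), yet the identity $\sin\alpha/\sin\beta=|V_2V_3|/|d_1|$ holds only when $\beta$ is the angle \emph{opposite} $d_1$, i.e.\ the angle at $V_3$ (this is how the symbol is used in Lemma~\ref{lema:otracond}). For your $\beta$ the correct ratio is $|V_2V_3|/|V_3V_4|$, and this is \emph{not} bounded under $[D1,D2]$: take $a=b=1$, $\at\to 0$, $\bt\to 1$. Fortunately the conclusions survive: $\angle V_3\le\pi-\alpha$ already bounds $\angle V_3$ away from $\pi$, and for the lower bound at $V_2$ one can use $\angle V_2\ge\angle V_1V_2V_4=\arctan(b/a)$ together with Lemma~\ref{lema:ladoacorto}.

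The genuine gap is in the ``only if'' direction, precisely at the step you yourself flag as the crux. Your diagonal--intersection rule for selecting $V_1$ is a clean explicit way of getting $(D1)$ (the paper simply asserts that a vertex with the ``containing parallelogram'' property exists, which amounts to the same inequality), but the subsequent claim that $\alpha\to 0$ forces $\angle V_3\to 0$ or $\pi$ is false. Take $a=b=1$, $\at=1-\epsilon$, $\bt=10\epsilon$ with $\epsilon\to 0$: this is exactly the configuration your rule produces (the diagonal $V_1V_3$ is the more unbalanced one and $V_1$ the far endpoint), it satisfies $(D1)$, and the four interior angles converge to roughly $90^\circ$, $84^\circ$, $141^\circ$, $45^\circ$---so $DAC$ holds uniformly---yet $\alpha=\angle V_2V_4V_3\to 0$. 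The issue is that $V_3$ approaches the \emph{endpoint} $V_2$ of $d_1$, not an interior point, and then $\angle V_3=\pi-\alpha-\beta$ need not degenerate because $\beta=\angle V_4V_2V_3$ stays bounded. The paper's proof contains the missing ingredient: since $\angle V_3\le\psi_M'<\pi$ one has $\alpha+\beta\ge\pi-\psi_M'>0$, so at least one of $\alpha,\beta$ is bounded below; if it is $\beta$ rather than $\alpha$, a reflection in $y=x$ (a rigid motion sending $K(a,b,\at,\bt)$ to $K(b,a,\bt,\at)$) swaps $\alpha\leftrightarrow\beta$ while preserving $(D1)$. Your parenthetical alternative through Theorem~\ref{teo:carrdp} does not sidestep this: the vertex delivered there need not agree with the one you need for $(D1)$, so the same coordination step is required.
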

\begin{proof}
Notice that it is always possible to select two neighboring sides $l_1,l_2$ of $K$ such that 
the parallelogram defined by these sides 
contains the element $K$. Call $V_1$ the common vertex of $l_1,l_2$ and $\beta$ the angle at $V_1$. After a rigid movement we may assume that $V_1=(0,0)$ and that $l_2$ lies along the $x$ axis (with nonnegative coordinates $(a,0)$). Moreover we can also assume that $l_1$ belongs to the upper half plane. Notice that  $l_{1}$ is the side joining $V_1V_4$ and following the proof of Theorem \ref{teo:carrdp} take  $b=|l_{1}| \sin(\beta)$ in such a way that $V_4$ can be written as $V_4=(b \cot(\beta),b)$. Then the linear mapping $L$ associated to the matrix $B$ defined in such theorem
 performs the desired transformation. Indeed, since  $\|B\|,\|B^{-1}\|<\frac{\sqrt{2}}{\sin(\beta)}$ with $\beta$ away from $0$ and $\pi$ (due to the fact that $K$ is under the $DAC$) we know that $L$ is of the class considered in Definition \ref{def:equi}. On the other hand, calling $L(\at,\bt)=V_3$ we observe that $(D1)$ holds  while thanks to the fact that the angle at $V_3$ is away from $0$ and $\pi$ the same holds for the angle at $(\at,\bt)$ meaning that at least one of the remaining angles of the triangle of vertices $(0,b),(\at,\bt),(a,0)$ does not approach zero nor $\pi$. Performing a rigid movement if necessary we may assume that this is the one at 
$(0,b)$ and hence $(D2)$ follows.  
Reciprocally, assume that $K(a,b,\at,\bt)$ verifies $[D1,D2]$ and it is equivalent to $K$. Notice that the maximal and minimal angle of $K(a,b,\at,\bt)$ are away from $0$ and $\pi$ (in terms of the constants given by $[D1,D2]$). Indeed, since at $V_1$ we have a right angle we only need to check the remaining vertices.  The angle at 
$V_4$ is bounded above by $\pi/2$ due to $(D1)$
and below by $\alpha$. Let us focus now on the angle at vertex $V_3$. It should be bounded below by $\pi/2$ due to $(D1)$. On the other hand, it can not approach $0$ due to $(D2)$. Finally, the angle at $V_2$ is greater than $\alpha$ and also bounded above by $\pi/2$. The proof concludes by using that $K$ is equivalent to $K(a,b,\at,\bt)$ (in the sense of Definition \ref{def:equi}) and Lemma \ref{lema:angles}.
\Box
\end{proof}

There is a property that can be derived from
$[D1,D2]$.
\begin{lemma}
 \label{lema:ladoacorto}
Let  $K(a,b,\at,\bt)$ be a general element under $[D1,D2]$, then $a\le Cb$.
\end{lemma}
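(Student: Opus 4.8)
The plan is to estimate $\sin\alpha$ directly in terms of the ratio $b/a$ and then invoke $(D2)$. Recall that $\alpha$ is the angle placed at $V_4$ of the triangle $T_1$ with vertices $V_2=(a,0)$, $V_3=(\at,\bt)$ and $V_4=(0,b)$ (the same angle used in Lemmas \ref{lema:otracond} and \ref{lema:cortoarriba}). First I would write the two edge vectors emanating from $V_4$, namely $u=V_2-V_4=(a,-b)$ and $w=V_3-V_4=(\at,\bt-b)$, and compute the signed area $u_x w_y-u_y w_x = a\bt+b\at-ab$. By \eqref{convJK} this equals $J_K(1,1)>0$, so it needs no absolute value and
\[
\sin\alpha=\frac{a\bt+b\at-ab}{\sqrt{a^2+b^2}\,\sqrt{\at^2+(b-\bt)^2}}.
\]

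The key step is a one-sided estimate of the right-hand side. For the numerator, condition $(D1)$ gives $\bt\le b$, hence $a\bt-ab\le 0$ and therefore $a\bt+b\at-ab\le b\at$. For the denominator, I would simply drop the nonnegative terms under each square root, getting $\sqrt{a^2+b^2}\ge a$ and $\sqrt{\at^2+(b-\bt)^2}\ge \at$, so that the product is at least $a\at$. Combining the two bounds and cancelling the positive factor $\at$ yields
\[
0<\sin\alpha\le\frac{b\at}{a\,\at}=\frac{b}{a}.
\]

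Finally I would read off the conclusion: condition $(D2)$ states $1/\sin\alpha\le C$, i.e. $\sin\alpha\ge 1/C$, and together with $\sin\alpha\le b/a$ this gives $a/C\le a\sin\alpha\le b$, that is $a\le Cb$.

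I do not expect a genuine obstacle here; the only points requiring care are the bookkeeping of which angle $\alpha$ denotes and the sign of the cross product, both settled by \eqref{convJK} and the convexity of $K$. It is worth noticing that $(D1)$ enters \emph{only} through the inequality $\bt\le b$, while the complementary bound $\at\le a$ plays no role; this asymmetry is consistent with the fact that $[D1,D2]$ is not expected to force the reverse estimate $b\le Ca$.
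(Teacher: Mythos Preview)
Your argument is correct and in the same spirit as the paper's, which simply records the inequality $\tan\alpha\le b/a$ and calls the proof elementary. You arrive at the slightly weaker $\sin\alpha\le b/a$ by an explicit cross-product computation; since $(D2)$ bounds $\sin\alpha$ from below, either inequality suffices, and your version has the virtue of being fully spelled out.
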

\begin{proof}
The proof is elementary since $\tan \alpha\le \frac{b}{a}$. \Box
\end{proof}
An advantage of the $DAC$ is that it simplifies the treatment of $I_p$. Indeed
from $(D1)$ we get
$$
\frac{1}{(1+\hat x(\frac{\bt}{b}-1) +\hat y(\frac{\at}{a}-1))^{p-1}}\le 
\frac{1}{(\frac{\bt}{b}+\frac{\at}{a}-1)^{p-1}},
$$
since $0\le \hat x,\hat y\le 1$. On the other hand, calling $y(x)=-(b/a)(x-a)$
to the equation of the straight line joining $V_2$ and $V_4$ we have
$$
\frac{\bt-y(\at)}{b}=\frac{\at-y^{-1}(\bt)}{a}=\frac{\bt}{b}+\frac{\at}{a}-1,
$$
and since $|l| \sin(\alpha)\le \bt-y(\at)$ and $|l| \sin(\alpha)\le \at-y^{-1}(\bt)$
we get 
\begin{equation}
 \label{eq:IPintermsofsin}
I_p\le \min\{a^{p-1},b^{p-1}\}\frac{1}{|l|^{p-1}\sin(\alpha)^{p-1}}. 
\end{equation}
As a consequence we obtain for $DAC$ the following equivalent of Lemma \ref{lemaI}, that holds \emph{for any} $1\le p$.

\begin{lemma}
\label{lemaIparadac}
If $K=K(a,b,\at,\bt)$ is a general convex quadrilateral under $[D1,D2]$ then, for any $1\le p $,
\bq
\label{Iparadac}
\ds \max \left\{ \frac{a}{b^{p-1}}, \frac{b}{a^{p-1}} \right\} I_p \leq C \frac{h}{|l|^{p-1}}
\eq
with a constant  $C$  depending only on those of $[D1,D2]$.
\end{lemma}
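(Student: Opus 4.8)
The plan is to read the estimate off directly from the pointwise bound \eqref{eq:IPintermsofsin}, which was established just above using only the convexity of $K$ together with $(D1)$, and then to combine it with condition $(D2)$ and an elementary min/max identity. First I would feed $(D2)$ into \eqref{eq:IPintermsofsin}: since $(D2)$ (see \eqref{condD2}) guarantees $1/\sin(\alpha)\le C$, and $p-1\ge 0$, we have $1/\sin(\alpha)^{p-1}\le C$, so that
$$
I_p\le C\,\frac{\min\{a^{p-1},b^{p-1}\}}{|l|^{p-1}}.
$$
Thus the analytic content of the lemma is already contained in \eqref{eq:IPintermsofsin}, whose derivation bounded the integrand's denominator below by $\frac{|l|\sin(\alpha)}{\min\{a,b\}}$, and what remains is bookkeeping.

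The second step is the observation that multiplying the right-hand side by the prefactor collapses the powers of $a$ and $b$ into a single side-length quantity. Writing $m=\min\{a,b\}$ and $M=\max\{a,b\}$ and using that $t\mapsto t^{p-1}$ is nondecreasing for $p\ge 1$, one has $\min\{a^{p-1},b^{p-1}\}=m^{p-1}$; comparing $a/b^{p-1}$ with $b/a^{p-1}$ amounts (after cross-multiplying) to comparing $a^{p}$ with $b^{p}$, whence $\max\{a/b^{p-1},\,b/a^{p-1}\}=M/m^{p-1}$. Multiplying, the factor $m^{p-1}$ cancels and we are left with
$$
\max\left\{\frac{a}{b^{p-1}},\frac{b}{a^{p-1}}\right\}I_p\le C\,\frac{M}{|l|^{p-1}}=C\,\frac{\max\{a,b\}}{|l|^{p-1}}.
$$

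To conclude I would only need $\max\{a,b\}\le h$, which is immediate from the construction of $K(a,b,\at,\bt)$: the numbers $a=|V_1V_2|$ and $b=|V_1V_4|$ are two side lengths of $K$, and every side length is bounded by the diameter $h$. (If one prefers, $(D1)$ and Lemma \ref{lema:ladoacorto} even give $h\comp\max\{a,b\}$, but only the trivial direction is needed here.) Substituting yields \eqref{Iparadac}. I do not expect a genuine obstacle: the whole difficulty was discharged in deriving \eqref{eq:IPintermsofsin}, and the only point worth flagging is that, in contrast with Lemma \ref{lemaI}, the present argument never invokes $p<3$; since $[D1,D2]$ keep $\alpha$ away from $0$ and $\pi$, the bound survives for every finite $p$ with no blow-up of the type $1/(3-p)$ noted in Remark \ref{rem:p<3}.
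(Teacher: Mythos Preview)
Your argument is correct and is exactly the derivation the paper has in mind: the lemma is stated there precisely as a consequence of \eqref{eq:IPintermsofsin}, and you have spelled out the elementary min/max bookkeeping and the application of $(D2)$ that turn that pointwise bound into \eqref{Iparadac}. There is nothing to add.
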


\subsection{The $mac$ and the family $K(a,b,\at,\bt)$} We finish this section with a characterization of the elements under the minimal angle condition. A direct consequence
of previous subsections an Remark \ref{dacorrdp}
is the following

\begin{theorem}
 \label{teo:carmac}
 Let $K$ be a general convex quadrilateral.
 Then $K$ satisfies the $mac$ if and only
 $K$ is equivalent to some $K(a,b,\at,\bt)$
 for which holds either $[D1,D2]$ or $[\Delta 1, D2,D3]$ (equiv. $[\Delta 2, D2,D3]$). 
 \end{theorem}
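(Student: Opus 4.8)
The plan is to prove this directly by assembling the characterizations already obtained for the $DAC$ (Theorem \ref{teo:cardac}) and for the regularity condition (Theorem \ref{teo:carreg}), feeding them with the dichotomy supplied by Remark \ref{dacorrdp}. The statement to be proved is itself a disjunction whose two alternatives, $[D1,D2]$ and $[\Delta 1,D2,D3]$, match \emph{exactly} the two reference classes appearing in those theorems, so the whole argument reduces to bookkeeping: decide which branch of Remark \ref{dacorrdp} applies and invoke the corresponding characterization.

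First I would treat the forward implication. Assume $K$ satisfies $mac(\psi_m)$. By Remark \ref{dacorrdp}, $K$ falls into one of two cases: either it verifies $DAC(\psi_m,\pi-\psi_m/2)$, or it satisfies the regularity condition \eqref{regcond} with a constant depending only on $\psi_m$. In the first case Theorem \ref{teo:cardac} gives that $K$ is equivalent to some $K(a,b,\at,\bt)$ under $[D1,D2]$. In the second case Theorem \ref{teo:carreg} gives that $K$ is equivalent to some $K(a,b,\at,\bt)$ under $[\Delta 1,D2,D3]$ (equivalently $[\Delta 2,D2,D3]$). In either branch $K$ is equivalent to a reference element of the required type, which is precisely what the theorem claims.

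Next I would prove the converse. Suppose $K$ is equivalent to some $K(a,b,\at,\bt)$ satisfying one of the two admissible sets of conditions. If $[D1,D2]$ holds, Theorem \ref{teo:cardac} yields that $K$ satisfies $DAC(\psi_m,\psi_M)$, and since $DAC$ entails $mac(\psi_m)$ by Definition \ref{defidac}, the conclusion follows. If instead $[\Delta 1,D2,D3]$ holds, Theorem \ref{teo:carreg} shows that $K$ is regular, and regularity implies the $mac$ (as recalled in Remark \ref{rem:regimplrdp}, where \eqref{regcond} is shown to force $mac(\theta)$ for some $\theta=\theta(\sigma)>0$). Hence $K$ satisfies the $mac$ in both cases, completing the equivalence.

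Since the argument is a synthesis of earlier results, I do not expect a genuine obstacle. The only delicate point is the control of constants: one must check that the $mac$ constant produced in the converse, and the reference-element constants produced in the forward direction, each depend only on the data of the respective hypothesis. This is automatic because every invoked ingredient already carries that quantitative dependence; Remark \ref{dacorrdp} produces the $DAC$ and regularity constants as explicit functions of $\psi_m$, while the affine equivalences in Theorems \ref{teo:cardac} and \ref{teo:carreg} transfer angle and length bounds with controlled distortion through Lemma \ref{lema:angles}.
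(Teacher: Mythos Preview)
Your proof is correct and follows exactly the approach indicated in the paper, which simply states that the result follows straightforwardly from Theorems \ref{teo:carreg} and \ref{teo:cardac} together with Remark \ref{dacorrdp}. You have faithfully unpacked this one-line argument: Remark \ref{dacorrdp} provides the dichotomy for the forward direction, while the converse uses the same two theorems together with the implications $DAC\Rightarrow mac$ and regularity $\Rightarrow mac$.
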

\begin{proof}
Follows straightforwardly from Theorems \ref{teo:carreg} and \ref{teo:cardac} together with Remark \ref{dacorrdp}. \Box
\end{proof}

\section{Triangles under the $MAC$ inside $K(a,b,\at,\bt)$.}
\label{section:triang}

Let $K=K(a,b,\at,\bt)$ be a general convex quadrilateral and consider the associated triangle $T=T(a,b)$. Let us recall that $M_{ij}$ (resp. $M_{ij}^T$) are the interpolation nodes of 
$Q_k$ on $K$ (resp. $\Pi_k$ on  $T$). We are interested, loosely speaking, in the problem
of finding for each $M_{ij}$ a close enough $M_{ij}^T$.    Notice that in general
$M_{ij}$ does not agree with $M_{ij}^T$, except
for $i=0$ or $j=0$. For any other node $M_{ij}$ (i.e. for $i\neq 0 \neq j$) we consider
a suitable triangle having $M_{ij}$ as one of its vertices and with the
remaining vertices belonging to the set of (edge) interpolation nodes of $\Pi_k$.
We choose it in the following way: if $M_{ij}$ is an \emph{edge} node on the top
of $K(a,b,\at,\bt)$ (i.e. $i=k,1\le j \le k$) we consider the triangle
$T_{kj}=\Delta(M_{kj}M^T_{k0}M^T_{k-j,j})$ 
if $M_{ij}$  is an \emph{edge} node on the right edge (i.e. $j=k,1\le i \le k$) we 
chose $T_{ik}=\Delta(M_{ik}M^T_{0k}M^T_{i,k-i})$ and finally if  $M_{ij}$ is \emph{interior} (i.e. $1\le i,j \le k-1$) we define a triangle $T_{ij}=\Delta (M_{ij}M^T_{0j}M^T_{i0})$ (see Figure \ref{fig:variostri}). 
 \begin{figure}[h]
\resizebox{12cm}{5cm}{\includegraphics{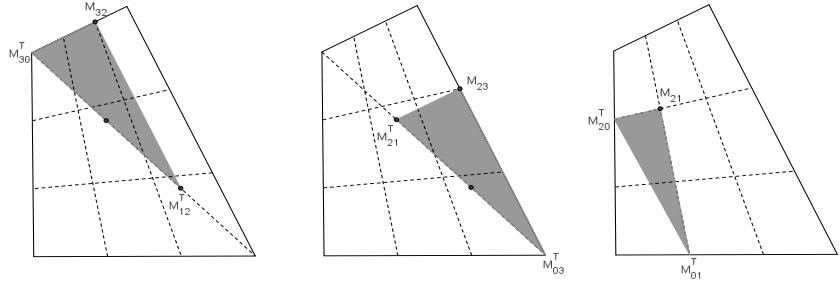}}
\caption{Representation of $T_{32}$, $T_{23}$ and $T_{21}$ in a $Q_3$ element {\it left:} $T_{32}$, {\it center:} $T_{23}$, {\it right:} $T_{21}$. }
 \label{fig:variostri}
\end{figure}
The geometry of these triangles are important in the sequel. In particular notice
that $T_{kj}$ and $T_{ik}$ are similar to the triangle $\Delta(V_2V_3V_4)$ therefore
we have immediately 
\begin{lemma}
\label{lema:similar}
Let $K(a,b,\at,\bt)$ be a general convex quadrilateral under $[\Delta 1, D2]$ (equiv. 
$[\Delta 2, D2]$, then for any $T=T_{kj}$ (resp. $T=T_{ik}$)  defined above we have that the side $\overline{M_{kj}M^T_{k0}}$ (resp. $\overline{M_{ik}M^T_{i\, k-i}}$)
is comparable to $l=\overline{V_3V_4}$ and the angle of $T$ at $M_{k0}$ (resp. $M^T_{i\,k-i}$)
is the angle $\alpha$ of condition $(D2)$. In particular $T$ verifies the $MAC$. 
\end{lemma}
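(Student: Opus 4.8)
The plan is to prove all three assertions at once by exhibiting $T_{kj}$ and $T_{ik}$ as homothetic copies of the triangle $\Delta(V_2V_3V_4)$; once the similarity is in hand, with an explicit center and ratio, the comparability of a side to $l$, the value of one angle, and the $MAC$ all drop out. First I would write down the coordinates of the three vertices of each triangle by evaluating $F_K$ from \eqref{FK} at the relevant nodes. For the top edge one has $\hy=1$, so $M_{kj}=F_K(j/k,1)=(\at\,j/k,\;b(1-j/k)+\bt\,j/k)$, while $M^T_{k0}=(0,b)=V_4$ and $M^T_{k-j,j}=(aj/k,\,b(1-j/k))$. For the right edge one has $\hx=1$, giving $M_{ik}=F_K(1,i/k)=(a(1-i/k)+\at\,i/k,\;\bt\,i/k)$, with $M^T_{0k}=(a,0)=V_2$ and $M^T_{i,k-i}=(a(1-i/k),\,bi/k)$.

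The key step is to recognize the homothety. For $T_{kj}$ I would check that the homothety of center $V_4$ and ratio $j/k$, namely $P\mapsto V_4+(j/k)(P-V_4)$, sends $V_4\mapsto M^T_{k0}$, $V_3\mapsto M_{kj}$ and $V_2\mapsto M^T_{k-j,j}$; these are three one-line verifications from the coordinates above. Hence $T_{kj}$ is similar to $\Delta(V_2V_3V_4)$ with ratio $j/k$. Symmetrically, the homothety of center $V_2$ and ratio $i/k$ sends $V_2\mapsto M^T_{0k}$, $V_3\mapsto M_{ik}$ and $V_4\mapsto M^T_{i,k-i}$, so $T_{ik}$ is similar to $\Delta(V_2V_3V_4)$ with ratio $i/k$.

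From the similarity the first two claims are immediate. The side $\overline{M_{kj}M^T_{k0}}$ is the image of $\overline{V_3V_4}=l$ under a homothety of ratio $j/k$, so its length is $(j/k)\,|l|$; since $1\le j\le k$ this lies between $|l|/k$ and $|l|$, hence is comparable to $|l|$ with a constant depending only on the fixed degree $k$ (the same computation with $i/k$ handles $\overline{M_{ik}M^T_{i,k-i}}$). For the angle, the vertex $M^T_{k0}$ is exactly the center $V_4$ of the homothety, so the angle of $T_{kj}$ there is literally the angle of $\Delta(V_2V_3V_4)$ at $V_4$, which is $\alpha$; for $T_{ik}$ the vertex $M^T_{i,k-i}$ corresponds to $V_4$ under the similarity, and since similarities preserve corresponding angles this angle again equals $\alpha$.

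It remains to verify the $MAC$, for which I would show that all three angles of $\Delta(V_2V_3V_4)$ are bounded away from $\pi$; the similarity then transfers this to $T_{kj}$ and $T_{ik}$. Condition $(D2)$ gives $\sin\alpha\ge 1/C$, so $\alpha$ is bounded away from both $0$ and $\pi$. For the angle $\beta$ at $V_3$ I would reuse the law-of-sines estimate already carried out in the proof of Lemma \ref{lema:otracond}: $\frac{\sin\alpha}{\sin\beta}=\frac{|V_2V_3|}{|d_1|}\le C$ by $(\Delta 1)$, whence $\sin\beta\ge\frac{1}{C}\sin\alpha$ is bounded below and $\beta$ is away from $0$ and $\pi$. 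The third angle equals $\pi-\alpha-\beta$, and since $\alpha+\beta$ is bounded away from $0$ it is bounded away from $\pi$. Thus $\Delta(V_2V_3V_4)$ satisfies the $MAC$ and so do its similar copies. I expect no genuine obstacle: the only thing requiring care is the coordinate bookkeeping needed to spot the two homotheties, while the sole nontrivial geometric input — that $\beta$ stays away from $0$ and $\pi$ — is already available from Lemma \ref{lema:otracond}.
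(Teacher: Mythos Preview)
Your proof is correct and follows exactly the approach the paper itself takes: the paper simply observes, in the sentence preceding the lemma, that $T_{kj}$ and $T_{ik}$ are similar to $\Delta(V_2V_3V_4)$ and then states the lemma ``immediately'', without further detail. Your explicit identification of the two homotheties (centered at $V_4$ with ratio $j/k$, and at $V_2$ with ratio $i/k$) and your appeal to the law-of-sines estimate from Lemma~\ref{lema:otracond} for the $MAC$ merely flesh out what the paper leaves to the reader.
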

For interior nodes we have the following 

\begin{lemma}
\label{lija}
Let $K=K(a,b,\at,\bt)$ be a convex quadrilateral which satisfy either $[D1,D2]$ or $[\Delta 1,D2,D3]$ (equiv. $[\Delta 2,D2,D3]$). 
If $1 \leq i,j \leq k-1$ then
\bit
\item[(a)]  $$|\overline{M_{i0}M_{ij}}|\comp a \ \ \mbox{and} \ \  |\overline{M_{0j}M_{ij}}|\comp b$$
(in particular $|\overline{M_{i0}M_{0j}}| \comp h$).
\item[(b)] $\alpha_{ij}$ is bounded away from $0$ and $\pi$ where $\alpha_{ij}$ is the angle between $\overline{M_{i0}M_{ij}}$ and $\overline{M_{i0}M_{0j}}$. In particular for $1\le i,j\le k-1$, any triangle  $T=\Delta(M_{i0},M_{0j},M_{ij})$ verifies the $MAC$.
\eit
\end{lemma}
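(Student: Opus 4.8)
The plan is to put everything in coordinates. Writing $\xi=j/k$ and $\eta=i/k$, the hypothesis $1\le i,j\le k-1$ forces $\xi,\eta\in[1/k,\,1-1/k]$, so for a \emph{fixed} degree $k$ both $\xi$ and $\eta$ are bounded away from $0$ and $1$ by constants depending only on $k$; this is exactly what makes the statement true and will be used repeatedly. From \eqref{FK} one reads off $M_{i0}=(0,b\eta)$, $M_{0j}=(a\xi,0)$ and $M_{ij}=(\xi[a+\eta(\at-a)],\,\eta[b+\xi(\bt-b)])$, whence the two vectors issuing from $M_{i0}$,
\[
u:=M_{ij}-M_{i0}=\bigl(\xi[a+\eta(\at-a)],\,\xi\eta(\bt-b)\bigr),\qquad
v:=M_{0j}-M_{i0}=(a\xi,\,-b\eta).
\]

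Before estimating I would reduce the two alternative hypotheses to a single setting. Since $D1\Rightarrow\Delta1$, in either case $[\Delta1,D2]$ holds, so Lemma \ref{lema:otracond} makes $(\Delta2)$ available: $|l|\le C|s|\le C\min\{a,b\}$, because $a,b$ are side lengths and $s$ is the shortest side. As $|l|=\sqrt{\at^2+(\bt-b)^2}$, this gives the two crucial bounds $\at\le C\min\{a,b\}$ and $|\bt-b|\le C\min\{a,b\}$; together with $\at\le Ca$, $\bt\le Cb$ from $(\Delta1)$ and $a\le Cb$ (from Lemma \ref{lema:ladoacorto} under $[D1,D2]$, or directly from $(D3)$ under $[\Delta1,D2,D3]$), this is all the geometry I will use.

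For part (a) I would just bound the three Euclidean lengths. Using $\xi,\eta\comp1$, the factor $a+\eta(\at-a)$ lies between $a/k$ and $Ca$, while $\eta^2(\bt-b)^2\le C a^2$; hence $|\overline{M_{i0}M_{ij}}|=|u|\comp a$. Symmetrically $b+\xi(\bt-b)\comp b$ and $\xi^2(\at-a)^2\le Ca^2\le Cb^2$ yield $|\overline{M_{0j}M_{ij}}|\comp b$. Finally $|v|=\sqrt{a^2\xi^2+b^2\eta^2}\comp\max\{a,b\}$, and $\max\{a,b\}\comp h$ because $h\ge\max\{a,b\}$ trivially (both are sides at $V_1$) while $(\Delta1)$ bounds all coordinates of $K$ by $C\max\{a,b\}$; this proves $|\overline{M_{i0}M_{0j}}|\comp h$.

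For part (b) the key observation is that the area of $\Delta(M_{i0},M_{0j},M_{ij})$ is governed by the Jacobian: a direct computation gives $u\times v=-\xi\eta\,J_K(\xi,\eta)$ with $J_K$ as in \eqref{jacoFK}, so that $\sin\alpha_{ij}=|u\times v|/(|u||v|)=\xi\eta\,J_K(\xi,\eta)/(|u||v|)$. Since $|u|\comp a$, $|v|\comp\max\{a,b\}$ and $a\le Cb$, we have $|u||v|\le Cab$, so it remains only to bound $J_K(\xi,\eta)=ab\,g(\xi,\eta)$ from below, where $g(\hx,\hy)=1+\hx(\bt/b-1)+\hy(\at/a-1)$. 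This lower bound — the one real obstacle — I would get not from the geometry but from convexity and the interior position of $(\xi,\eta)$: the affine $g$ equals its own bilinear interpolant,
\[
g(\xi,\eta)=(1-\xi)(1-\eta)\,g(0,0)+\xi(1-\eta)\,g(1,0)+(1-\xi)\eta\,g(0,1)+\xi\eta\,g(1,1),
\]
and all four corner values $g(0,0)=1$, $g(1,0)=\bt/b$, $g(0,1)=\at/a$, $g(1,1)=\at/a+\bt/b-1$ are nonnegative (the last strictly, by \eqref{convJK}). Keeping only the first term gives $g(\xi,\eta)\ge(1-\xi)(1-\eta)\ge 1/k^2$, a bound independent of the shape of $K$. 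Consequently $\sin\alpha_{ij}=\xi\eta\,ab\,g(\xi,\eta)/(|u||v|)\ge C^{-1}k^{-4}>0$, so $\alpha_{ij}$ is bounded away from $0$ and $\pi$. Since the three angles sum to $\pi$, controlling $\alpha_{ij}$ away from both endpoints forces the remaining two angles below $\pi-\alpha_{ij}<\pi$, which is precisely the $MAC$, completing the proof.
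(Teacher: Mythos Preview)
Your argument is correct and follows the same overall strategy as the paper: compute the nodes explicitly via $F_K$, bound the side lengths directly using $(\Delta 1)$/$(\Delta 2)$ together with $a\le Cb$, and for part~(b) relate $\sin\alpha_{ij}$ to the Jacobian $J_K(\xi,\eta)$ by a cross--product (determinant) computation.

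The one noteworthy difference is in how you bound $g(\xi,\eta)=1+\xi(\bt/b-1)+\eta(\at/a-1)$ from below. The paper splits into the cases $\bt/b\ge 1$ and $\bt/b<1$; in the latter it obtains $m\ge (\bt/b)(1-y)$ and then must invoke Lemma~\ref{lema:cortoarriba} (passing to an equivalent element with $\bt/b\ge 1/2$) to finish. Your bilinear--barycentric representation of the affine function $g$, keeping only the corner value $g(0,0)=1$, yields $g(\xi,\eta)\ge(1-\xi)(1-\eta)\ge 1/k^{2}$ in a single stroke, using nothing beyond the nonnegativity of the four corner values (i.e.\ convexity via \eqref{convJK}). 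This is a genuine simplification: it removes the case split and the appeal to Lemma~\ref{lema:cortoarriba}, at the cost only of a slightly worse (but still $k$--dependent) constant. Otherwise the two proofs coincide.
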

\begin{proof}
 
\bit
\item[(a)] To prove that the measure of the segment $\overline{M_{i0}M_{ij}}$ is comparable to $a$ it is sufficient to prove that the measure of the segment $\overline{M_{i0}M_{ik}}$ is comparable to $a$ since these segments are mutually proportional.

For  $0<\hy=i/k<1$ have
$$\begin{array}{lcl}
|\overline{M_{i0}M_{ik}}|^2 &=&
\left\| F_K(1,\hy)-F_K(0,\hy) \right\|^2\ =\ \left\| (a(1-\hy)+\at\hy, (\bt-b)\hy) \right\|^2\\
\end{array}$$
therefore
$$a^2(1-\hy)^2 \le |\overline{M_{i0}M_{ik}}|^2 \le a^2(1-\hy)^2+2a\at\hy(1-\hy)+\hy^2|l|^2.$$
Using $(\Delta 1)$ (or $(D1)$) and that $l$ is comparable to the shortest side the statement is proved.
Similarly, to the appropriate $0<\hx<1$ we have
$$\begin{array}{lcl}
|\overline{M_{0j}M_{kj}}|^2 &=&
\left\| F_K(\hx,1)-F_K(\hx,0) \right\|^2\ =\ \left\| ((\at-a)\hx, b(1-\hx)+\bt\hx ) \right\|^2\\
\end{array}$$

therefore for a suitable constant $C$, we get
\bq
\label{uph}
b^2(1-\hx)^2  \le |\overline{M_{0j}M_{kj}}|^2 \leq 2\left[ (\at-a)^2\hx^2+ b^2 +(\bt-b)^2\hx^2 \right] \leq C(a^2+b^2)
\eq
and the proof concludes by using $(D3)$ in one case or Lemma \ref{lema:ladoacorto} in the other.
Now we immediately have, by using $\Delta 1$ or $D1$, that $|\overline{M_{i0}M_{0j}}| \comp h$.

\item[(b)] Calling $\mu$ the   
matrix with rows $w_1=M_{i0}-M_{ik}$ and $w_2=M_{i0}-M_{0j}$ we see that  
$$\frac{1}{\sin \alpha_{ij}}=\frac{\|w_1\|\|w_2\|}{|\det \mu|}.$$ 
Thanks to the previous item we know that the numerator can be bounded in terms of $ab$. We claim that $0<ab<C|\det \mu|$. Indeed, a direct
calculation gives for $y=\frac{i}{k}$ and $x=\frac{j}{k}$
$$|\det \mu|=aby \left| 1+\left( \frac{\at}{a}-1 \right)y+ \left(\frac{\bt}{b}-1 \right)x \right|$$
and since $1\le i\le k-1$ all we need to show is that term inside the modulus $m= 1+\left( \frac{\at}{a}-1 \right)y+ \left(\frac{\bt}{b}-1 \right)x$ stays away from zero. Now, if 
$\frac{\bt}{b}-1\ge 0$ then $m>1-y$ and we are done. On the other hand, if $\frac{\bt}{b}-1< 0$ then we write  
$$m \ge
\left( \frac{\bt}{b} +\frac{\at}{a}-1 \right)y+\frac{\bt}{b}(1-y)>\frac{\bt}{b}(1-y)$$
where the first inequality follows taking $x=1$ and the second one by \eqref{convJK}.
Using Lemma \ref{lema:cortoarriba} (see also Remark \ref{rem:alsodac}) the proof is complete.
$\Box$
\eit

\end{proof}
\section{The error treatment}
\label{section3}
\setcounter{equation}{0}

\begin{lemma}
\label{lema:desPoli} Let $K(a,b,\at,\bt)$ be a convex quadrilateral and assume $(\Delta 1)$. Let $T=T(a,b)$, then for any
polynomial $q\in \mathbb{P}_k$, there exists a constant
depending only on $k$ and on the $C$ given in $(\Delta 1)$ such that
\begin{equation}
\label{eq:desPoli}
\|q\|_{0,p,K(a,b,\at,\bt)}\le C \|q\|_{0,p,T} .
\end{equation}

\end{lemma}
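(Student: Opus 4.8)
The plan is to trap the whole quadrilateral inside a \emph{dilate} of $T$ and then transport the problem to a fixed reference triangle, where it reduces to equivalence of norms on the finite-dimensional space $\mathbb{P}_k$.

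First I would use $(\Delta 1)$ to produce a containment. Since $\at/a,\bt/b\le C$, the vertex $V_3=(\at,\bt)$ satisfies $\frac{\at}{a}+\frac{\bt}{b}\le 2C$. Setting $C'=\max\{1,2C\}$ and
\[
\tilde T:=T(C'a,C'b)=\Big\{(x,y):x,y\ge 0,\ \tfrac{x}{a}+\tfrac{y}{b}\le C'\Big\},
\]
each of the four vertices $V_1,\dots,V_4$ of $K$ lies in $\tilde T$ (for $V_2,V_4$ because $C'\ge 1$, for $V_3$ because $C'\ge 2C$). As $\tilde T$ is convex and $K$ is the convex hull of its vertices, we get $K\subseteq\tilde T$ and hence
\[
\|q\|_{0,p,K}\le\|q\|_{0,p,\tilde T}.
\]
This is the only place where $(\Delta 1)$ is genuinely used, and the essential point is that the dilation factor $C'$ depends only on the constant of $(\Delta 1)$, not on $a,b,\at,\bt$.

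Next I would pass to the reference triangle $\hat T=\Delta((0,0),(1,0),(0,1))$. Writing $G(\hx,\hy)=(a\hx,b\hy)$ and $\hat q:=q\circ G\in\mathbb{P}_k$, a change of variables gives $\|q\|_{0,p,T}^p=ab\,\|\hat q\|_{0,p,\hat T}^p$. Mapping $\hat T$ by $(\hx,\hy)\mapsto(C'a\hx,C'b\hy)$ and then performing the substitution that rescales $\hat T$ by $C'$ gives $\|q\|_{0,p,\tilde T}^p=ab\,\|\hat q\|_{0,p,C'\hat T}^p$, where $C'\hat T=\Delta((0,0),(C',0),(0,C'))$. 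The common factor $ab$ cancels, so it remains to establish
\[
\|\hat q\|_{0,p,C'\hat T}\le C''\,\|\hat q\|_{0,p,\hat T}\qquad\text{for all }\hat q\in\mathbb{P}_k .
\]

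Finally, this last inequality follows from equivalence of norms on the finite-dimensional space $\mathbb{P}_k$: both $\|\cdot\|_{0,p,\hat T}$ and $\|\cdot\|_{0,p,C'\hat T}$ are genuine norms on $\mathbb{P}_k$ (a polynomial vanishing almost everywhere on a set of positive measure is identically zero), and any two norms on a finite-dimensional space are comparable, so such a $C''=C''(k,p,C')$ exists. Chaining the three displays yields $\|q\|_{0,p,K}\le C''\|q\|_{0,p,T}$ with $C''$ depending only on $k$ and the constant of $(\Delta 1)$ (for each fixed $p$), as claimed. I expect the norm-equivalence step to be entirely routine; the real content is the geometric containment $K\subseteq\tilde T$, together with the mild care needed to verify that the dilation factor—and hence the final constant—is independent of the shape parameters $a,b,\at,\bt$.
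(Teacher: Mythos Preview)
Your proof is correct and follows essentially the same strategy as the paper: use $(\Delta 1)$ to trap $K$ in a set that is a bounded dilation of a set contained in $T$, pull back to a fixed reference configuration, and invoke equivalence of norms on the finite-dimensional space $\mathbb{P}_k$. The only cosmetic difference is that the paper sandwiches with rectangles $K_s=K(a/2,b/2,a/2,b/2)\subset T$ and $K_l=K(\overline a,\overline b,\overline a,\overline b)\supset K$ (pulling back to the unit square), whereas you use $T$ itself and the dilated triangle $\tilde T=T(C'a,C'b)$ (pulling back to the unit triangle); your version is arguably a bit more direct.
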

\begin{proof} The proof is standard.  Let us
introduce an small rectangle $K_s$ and a large
rectangle $K_l$ as follows
$$K_s:=K(a/2,b/2,a/2,b/2)\subset T \subset K(a,b,\at,\bt)\subset K_l:=K(\overline{a},\overline{b},\overline{a},\overline{b})$$
where $\overline{a}=max\{a,\at\}, \overline{b}=max\{b,\bt\}$. All we need is to show that
\begin{equation}
\label{eq:normeq}
\|q\|_{0,p,K_l}\le C \|q\|_{0,p,K_s}. 
\end{equation}
Thanks to ($\Delta$1) we have that the quotients $\frac{\overline a}{a},\frac{\overline b}{b}$ are bounded in terms of a generic constant $C$. For the sake of clarity we rename this time  the constant and write $\bar C$. Consider now the \emph{reference} sets 
  $$\hat K_{\bar C}=K \left( \frac{1}{2\bar C},\frac{1}{2\bar C},\frac{1}{2\bar C},\frac{1}{2\bar C} \right)\subset \hat K= K(1,1,1,1).$$
Using equivalence of norms in the finite dimensional space $\mathbb{P}_k$ we get 
$$\|\hat q\|_{0,p,\hat K}\le C  \|\hat q\|_{0,p,\hat K_{\bar C}},$$
for any $\hat q\in \mathbb{P}_k$ and where $C$ depends only on $k$ and $\bar C$. 
Now \eqref{eq:normeq} follows by changing variables
with a linear map $L:\hat K \to K_l$ taking into account that for such an $L$,  $L(\hat K_{\bar C})\subset K_s$. \Box
\end{proof}
\smallskip

Write $K=K(a,b,\at,\bt)$ and let $\Pi_k$ be the Lagrange interpolation operator of order $k$ on the triangle $T=T(a,b)$ and let $p \geq 1$ then we can write

$$|u-Q_ku|_{1,p,K} \leq |u-\Pi_ku|_{1,p,K}+|\Pi_ku-Q_ku|_{1,p,K}.$$

Since $\Pi_ku - Q_ku$ belongs to the $\mathcal Q_k$ quadrilateral finite element space and vanishes at $M_{0j}$ and 
$M_{i0}$ for all $0 \leq i,j \leq k$, it follows that
$$\ds (\Pi_ku-Q_ku)(X) = \sum_{i,j \neq 0} (\Pi_ku-u)(M_{ij}) \phi_{ij}(X)$$

where $\phi_{ij}$ is the basis function associated to $M_{ij}$. Therefore
\bq
\label{destr}
|u-Q_ku|_{1,p,K} \leq |u-\Pi_ku|_{1,p,K} + \sum_{i,j \neq 0} |(\Pi_ku-u)(M_{ij})| |\phi_{ij}|_{1,p,K}.
\eq

Taking into account that $T$ verifies the $MAC$ (actually $MAC(\pi/2)$)   we have  \cite{A2,BA,J1} that
\bq
\label{eq:LagEnTri0}
\left\| u-\Pi_ku \right\|_{0,p,T} \leq C h^{k+1} |u|_{k+1,p,T},
\eq
and
\bq
\label{eq:LagEnTri}
|u-\Pi_ku|_{1,p,T} \leq C h^k |u|_{k+1,p,T}.
\eq
The next lemma extends this approximation result to $K$.

\begin{lemma}
\label{error1pik}
Let $K=K(a,b,\at,\bt)$ be a convex quadrilateral and assume $(\Delta 1)$. Let $T=T(a,b)$  and $\Pi_ku$   the $\mathbb{P}_k$ Lagrange interpolation operator on $T$. Then for any $1\le p$

\bq
\label{errorpikinK}
|u-\Pi_ku|_{1,p,K} \leq C h^k |u|_{k+1,p,K} .
\eq
\end{lemma}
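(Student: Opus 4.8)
The plan is to transport the known anisotropic error estimate \eqref{eq:LagEnTri} on the triangle $T=T(a,b)$ to the whole quadrilateral $K$, using Lemma \ref{lema:desPoli} to absorb the purely polynomial part of the error over the extra region $K\setminus T$. Recall first that $T\subseteq K$: since $K$ is convex, $V_3=(\at,\bt)$ lies above the segment $\overline{V_2V_4}$, so $T=\Delta(V_1,V_2,V_4)$ is contained in $K$. On $T$ the optimal estimate \eqref{eq:LagEnTri} is already available because $T$ obeys $MAC(\pi/2)$, so all the difficulty is concentrated on $K\setminus T$, where no triangle estimate applies directly. I would handle it by comparing the triangle interpolant $\Pi_k u$ with a freely chosen auxiliary polynomial $q\in\mathbb{P}_k$.

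Concretely, for arbitrary $q\in\mathbb{P}_k$ I would use reproduction ($\Pi_k q=q$) and linearity to write $\Pi_k u-q=\Pi_k(u-q)$, and then split $u-\Pi_k u=(u-q)-\Pi_k(u-q)$. The entries of $\nabla\Pi_k(u-q)$ lie in $\mathbb{P}_{k-1}\subseteq\mathbb{P}_k$, so Lemma \ref{lema:desPoli} applied componentwise (over $K$, hence over $K\setminus T$) gives $|\Pi_k(u-q)|_{1,p,K}\le C|\Pi_k(u-q)|_{1,p,T}$, while on $T$ the triangle inequality together with \eqref{eq:LagEnTri} (applied to $u-q$, whose $(k+1)$-seminorm equals that of $u$) yields $|\Pi_k(u-q)|_{1,p,T}\le |u-q|_{1,p,T}+Ch^k|u|_{k+1,p,T}$. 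Using $T\subseteq K$ to replace the seminorms on $T$ by those on $K$, the whole statement reduces to a single \emph{polynomial approximation} estimate on $K$, namely the existence of $q\in\mathbb{P}_k$ with
\[
|u-q|_{1,p,K}\le Ch^k|u|_{k+1,p,K},
\]
with $C$ depending only on the constant in $(\Delta 1)$, on $k$ and on $p$.

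This last estimate is the heart of the matter, and the obstacle is that $K$ may be arbitrarily narrow: a naive application of the Bramble--Hilbert lemma on $K$ produces a constant (the chunkiness parameter) that blows up as the aspect ratio degenerates. To circumvent this I would argue anisotropically. The diagonal scaling $G(\hx,\hy)=(a\hx,b\hy)$ maps a reference convex quadrilateral $\widetilde K=G^{-1}(K)$, with vertices $(0,0),(1,0),(\at/a,\bt/b),(0,1)$, onto $K$; by $(\Delta 1)$ this reference element lies in a fixed square $[0,C]^2$ and always contains the unit triangle, so it ranges over a compact family of \emph{uniformly shape-regular} convex domains. On such a family a \emph{directional} polynomial approximation (anisotropic Bramble--Hilbert) holds with a uniform constant, bounding $\|\partial_{\hx}(\widetilde u-\widetilde q)\|_{0,p,\widetilde K}$ only by the order-$(k+1)$ derivatives of $\widetilde u=u\circ G$ that contain at least one $\hx$, and symmetrically for $\partial_{\hy}$. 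Pulling this back through $G$, the first derivative $\partial_x(u-q)$ (resp. $\partial_y(u-q)$) is then controlled by sums of $a^{\,i-1}b^{\,j}\,\|\partial_x^i\partial_y^j u\|_{0,p,K}$ with $i\ge1$, $i+j=k+1$ (resp. with $j\ge1$). Since every such weight has total degree $(i-1)+j=k$ and $a,b\le h$, they are all dominated by $Ch^k$, which gives the required estimate.

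The main obstacle is precisely this directional, aspect-ratio-independent step. The factor $h^k$, carrying the largest scale, is generous enough to swallow all the anisotropic weights $a^{\,i-1}b^{\,j}$, but only after one has arranged, through the directional estimate, that no pure $\partial_y^{\,k+1}$ derivative contributes to the bound for $\partial_x(u-q)$; otherwise a negative power of $a$ would appear and destroy the estimate for thin elements (the plain, non-directional Bramble--Hilbert pullback does produce exactly such a term and therefore fails). Once this anisotropic approximation is secured, the reduction carried out in the first two paragraphs closes the proof.
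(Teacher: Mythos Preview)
Your reduction is the same as the paper's: split $u-\Pi_ku=(u-q)-(\Pi_ku-q)$ for a polynomial $q\in\mathbb{P}_k$, use Lemma~\ref{lema:desPoli} on the components of $\nabla(\Pi_ku-q)\in\mathbb{P}_{k-1}$ to pass from $K$ to $T$, and then invoke \eqref{eq:LagEnTri} on $T$. Where you diverge is in producing the polynomial approximation $|u-q|_{1,p,K}\le Ch^k|u|_{k+1,p,K}$ with an aspect-ratio-free constant. You scale by $G(\hx,\hy)=(a\hx,b\hy)$ to a uniformly shape-regular family and invoke a \emph{directional} Bramble--Hilbert/averaged-Taylor estimate so that the pullback weights $a^{i-1}b^{j}$ are all $\le h^k$. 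This works, but it is heavier machinery than needed. The paper instead takes $q=\mathcal{P}_ku$ defined by $\int_K D^\alpha u=\int_K D^\alpha \mathcal{P}_ku$ for $|\alpha|\le k$ and obtains the approximation by \emph{iterated Poincar\'e--Wirtinger} on the convex set $K$: each $D^\alpha(u-\mathcal{P}_ku)$ has zero mean, and on convex domains the Poincar\'e constant is proportional to the diameter with a factor depending only on $p$ (and the dimension), \emph{not} on the chunkiness. That single observation kills the aspect-ratio issue without any anisotropic scaling or directional bookkeeping. Your route has the advantage of yielding genuinely anisotropic bounds (terms $a^{i-1}b^{j}$ rather than a blunt $h^k$), which can matter elsewhere, but for the present lemma the paper's argument is shorter and uses only elementary tools already available in the paper's setting.
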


\proof Let $u\in W^{k+1,p}(K)$ and $\mathcal{P}_k u \in \mathbb{P}_k$
defined as
$$\int_KD^\alpha u=\int_KD^\alpha \mathcal{P}_ku \qquad (|\alpha| \leq k).$$
Since $K$ is convex, we have
\bq
\label{uPku}
|u-\mathcal{P}_ku|_{1,p,K} \leq C h^k |u|_{k+1,p,K},
\eq
as one can see by applying repeatedly the Poincar\'e inequality. Writing
$$
|u-\Pi_ku|_{1,p,K} \le |u-\mathcal{P}_ku|_{1,p,K}
+|\mathcal{P}_ku-\Pi_ku|_{1,p,K},
$$
we observe that the first term is fine. For the second
 one we consider an arbitrary first derivative of $\mathcal{P}_ku-\Pi_ku$ and call it $ D(\mathcal{P}_ku-\Pi_ku)\in \mathbb{P}_{k-1}$. Using Lemma \ref{lema:desPoli} 
$$\|D(\mathcal{P}_ku-\Pi_ku)\|_{0,p,K}\le C \|D(\mathcal{P}_ku-\Pi_ku)\|_{0,p,T}\le C \left[ |\mathcal{P}_ku-u|_{1,p,K}+ |u-\Pi_ku|_{1,p,T} \right]$$
and the lemma follows from \eqref{eq:LagEnTri} and \eqref{uPku}. $\Box$

\begin{lemma}
\label{estphi}
Let $K=K(a,b,\at,\bt)$ be a convex quadrilateral.  
\bit
\item[(a)] Assume that $1\le p<3$ and that $K$ satisfies $[\Delta 1, D2]$ (equiv. $[\Delta 2, D2]$) then for any basis function  $\phi$, 
$$\ds |\phi|_{1,p,K} \leq C \frac{h^{1/p}}{|l|^{1/q}},$$
where $q$ is the conjugate exponent of $p$ (the constant $C$ may behave as $\frac{1}{3-p}$, see Remark \ref{rem:p<3}).

\item[(b)] Assume that $1\le p<3$ and that $K$ satisfies either $[\Delta 1, D2,D3]$ (equiv. 
$[\Delta 2, D2,D3]$) then 
$$\ds |\phi|_{1,p,K} \leq C \frac{h^{1/p}}{a^{1/q}},$$
where $\phi$ is an \emph{internal} basis function.

\item[(c)] For any $1\le p$, assume that $K$ satisfies $[D1,D2]$ then for any internal basis function
$$\ds |\phi|_{1,p,K} \leq C \frac{h^{1/p}}{a^{1/q}},$$

\item[(d)] For any $1\le p$, assume that $K$ satisfies $[D1,D2]$ then for any \emph{edge} basis function
$$\ds |\phi|_{1,p,K} \leq C \frac{h^{1/p}}{|l|^{1/q}}.$$
\eit

\end{lemma}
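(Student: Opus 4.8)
In all four parts I would begin from the elementary bound $|\phi|_{1,p,K}^p\le C(\|\partial\phi/\partial x\|_{0,p,K}^p+\|\partial\phi/\partial y\|_{0,p,K}^p)$ coming from the equivalence of norms on $\R^2$, together with the observation that any estimate of the shape $|\phi|_{1,p,K}^p\le C\,h/t^{p-1}$ yields, after taking $p$-th roots, exactly $|\phi|_{1,p,K}\le C\,h^{1/p}/t^{1/q}$ since $1/q=(p-1)/p$. Parts (a) and (d), which concern a \emph{generic} basis function, then fall out at once. For (a) I would insert the generic derivative bounds of Lemma \ref{edgesnodes} and then Lemma \ref{lemaI} to get $|\phi|_{1,p,K}^p\le C\max\{a/b^{p-1},b/a^{p-1}\}I_p\le C\,h/|l|^{p-1}$, valid for $1\le p<3$ under $[\Delta 1,D2]$. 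Part (d) is verbatim the same computation with Lemma \ref{lemaIparadac} replacing Lemma \ref{lemaI}; this removes the restriction $p<3$ at the cost of the stronger hypothesis $[D1,D2]$, and in fact bounds \emph{every} basis function, the qualifier ``edge'' merely signalling that an interior node will be treated more sharply in (c).

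The substance lies in (b) and (c), where for an \emph{interior} node one must produce the smaller denominator $a$ in place of $|l|$ (recall $|l|\le C a$, so this is a genuinely better estimate when the element degenerates towards a triangle). Both rest on the refined interior-node bounds of Lemma \ref{derparc}. The first move is to normalise: by Lemma \ref{lema:cortoarriba} (together with Remark \ref{rem:alsodac} in the $[D1,D2]$ case) I may assume $\bt/b\ge\frac12$, so that every factor $\max\{1,(b/\bt)^{p-1}/2\}$ appearing in Lemma \ref{derparc} becomes a mere constant $C_p$; the resulting bound transfers back to the original $K$ because the underlying transformation is a rigid motion. Lemma \ref{derparc} then reads $\|\partial\phi/\partial x\|_{0,p,K}^p\le C\frac{b}{a^{p-1}}(|1-\bt/b|^pI_p+C_p)$ and $\|\partial\phi/\partial y\|_{0,p,K}^p\le C\frac{a}{b^{p-1}}((\at/a)^pI_p+C_p)$.

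The key auxiliary estimate I would prove is $I_p\le C\,a^{p-1}/|l|^{p-1}$, by two routes according to the hypotheses: under $[D1,D2]$ it is exactly \eqref{eq:IPintermsofsin} (using $\min\{a^{p-1},b^{p-1}\}\le a^{p-1}$ and $\sin\alpha$ bounded below by $(D2)$), valid for all $p$, which drives (c); under $[\Delta 1,D2,D3]$ it follows from Lemma \ref{lemaI} together with $a\comp b\comp h$, which $(D3)$ forces, valid for $1\le p<3$, which drives (b). With this in hand the constant terms are immediate: $\frac{b}{a^{p-1}}C_p\le C\,h/a^{p-1}$ because $b\le h$, and $\frac{a}{b^{p-1}}C_p\le C\,h/a^{p-1}$ because $a\le Cb$ (Lemma \ref{lema:ladoacorto}) and $a\le h$. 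For the two $I_p$ terms the small factor $|l|^{p-1}$ is absorbed by the compensating numerators: using the purely geometric inequalities $|b-\bt|\le|l|$ and $\at\le|l|$ one writes $b|1-\bt/b|^p=|b-\bt|^p/b^{p-1}\le|l|^p/b^{p-1}$ and $a^p(\at/a)^p=\at^p\le|l|^p$, so each term is at most $C\,|l|/b^{p-1}\le C\,h/a^{p-1}$ (again by $a\le Cb$ and $|l|\le h$). Collecting everything yields $|\phi|_{1,p,K}^p\le C\,h/a^{p-1}$, and the $p$-th root finishes (b) and (c).

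The main obstacle is precisely this compensation step. One cannot reach the $a$-denominator from Lemma \ref{lemaI} or Lemma \ref{lemaIparadac} alone, since those only deliver $h/|l|^{p-1}$, which is too large when $|l|\ll a$; it is essential to retain the factors $|1-\bt/b|^p$ and $(\at/a)^p$ supplied by the interior-node structure of Lemma \ref{derparc} and to recognise, through $|b-\bt|\le|l|$ and $\at\le|l|$, that they cancel exactly the troublesome power of $|l|$. For (c) the bookkeeping must moreover hold uniformly in $p\ge1$, which is why the argument there leans on the pointwise estimate \eqref{eq:IPintermsofsin} rather than on the $p<3$ integral bound of Lemma \ref{lemaI}.
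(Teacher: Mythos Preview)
Your proof is correct and follows essentially the same route as the paper: parts (a) and (d) combine Lemma~\ref{edgesnodes} with Lemma~\ref{lemaI} and Lemma~\ref{lemaIparadac} respectively, while (b) and (c) invoke Lemma~\ref{derparc} after the normalisation of Lemma~\ref{lema:cortoarriba}, then absorb the factors $|1-\bt/b|^p$ and $(\at/a)^p$ via $|b-\bt|\le|l|$, $\at\le|l|$ exactly as the paper does. The only cosmetic difference is that you package the bound on $I_p$ as the standalone inequality $I_p\le C\,a^{p-1}/|l|^{p-1}$ before substituting, whereas the paper applies Lemma~\ref{lemaI} (resp.\ Lemma~\ref{lemaIparadac}) directly inside the chain; and when you justify $a\le Cb$ for the constant terms you cite Lemma~\ref{lema:ladoacorto}, which strictly covers case (c), while in case (b) the same inequality comes from $(D3)$---a distinction the paper makes explicit but which changes nothing mathematically.
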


\begin{proof} Part (a) follows from Lemma \ref{edgesnodes} and Lemma \ref{lemaI}. On the other hand, by Lemma \ref{derparc} and Lemma \ref{lema:cortoarriba} we notice that to show (b) it is sufficient to prove that
\bq
\frac{b}{a^{p-1}} |1-\bt/b|^p I_p\ ,\ \frac{a}{b^{p-1}} (\at/a)^p I_p \leq
C \frac{h}{a^{p-1}}.
\eq
Using that $\at \leq |l|$ and $|b-\bt| \leq |l|$, together with $(\Delta 2)$ and Lemma \ref{lemaI} we have
$$\ds \frac{b}{a^{p-1}}|1-\bt/b|^p I_p \leq
C \frac{|l|^{p}}{b^{p}} \frac{b}{a^{p-1}} I_p \leq
C \frac{h}{b^{p-1}} \leq
C \frac{h}{a^{p-1}}$$
where the last inequality follows  from $(D3)$.
Similarly,
$$\ds \frac{a}{b^{p-1}} (\at/a)^p I_p \leq 
C \frac{h}{a^{p-1}}.$$
Item (c) follows similarly to item (b) using Lemma \ref{lemaIparadac} instead of Lemma \ref{lemaI} and
Lemma \ref{lema:ladoacorto} instead of $(D3)$. Finally, the last item (d) follows straightforwardly from Lemma \ref{edgesnodes} and  Lemma \ref{lemaIparadac}.  $\Box$
\end{proof}
Let us now recall the following
\begin{lemma}
\label{traza}
Let $T$ be a triangle with diameter $h_T$ and $e$ be any of its sides. For any $p \geq 1$ we have
$$\ds \left\| u \right\|_{0,p,e} \leq 2^{1/q} \left( \frac{|e|}{|T|} \right)^{1/p}
\left\{ \left\| u \right\|_{0,p,T}+h_T \left| u \right|_{1,p,T} \right\}$$
where $q$ is the dual exponent of $p$.
\end{lemma}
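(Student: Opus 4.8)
The plan is to obtain the trace estimate from the divergence theorem applied to a suitably chosen affine vector field on $T$, a device that works uniformly for every triangle (including obtuse or nearly degenerate ones). Let $A$ be the vertex of $T$ opposite to the side $e$ and set $\mathbf{F}(X)=X-A$. First I would isolate the two geometric facts that make this field convenient. On each of the two sides of $T$ issuing from $A$ the vector $X-A$ is parallel to that side, hence $\mathbf{F}\cdot\mathbf{n}=0$ there and these two sides contribute nothing to the boundary integral; on the other hand, for $X\in e$ one has $\mathbf{F}\cdot\mathbf{n}=(X-A)\cdot\mathbf{n}=d$, where $d$ is the (constant) distance from $A$ to the line containing $e$, and $d=2|T|/|e|$ since $|T|=\tfrac12|e|d$. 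Throughout one works with $u\in C^{1}(\overline T)$ and passes to a general $u\in W^{1,p}(T)$ at the end by the usual density argument.

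Next I would apply the divergence theorem to the field $|u|^{p}\mathbf{F}$. Using $\mathrm{div}\,\mathbf{F}=2$ and the chain rule $\nabla(|u|^{p})=p|u|^{p-1}\mathrm{sgn}(u)\nabla u$ (valid a.e. for $p\ge 1$), the boundary contributions from the two sides through $A$ drop out and the one from $e$ equals $d\,\|u\|_{0,p,e}^{p}$, so that
$$\frac{2|T|}{|e|}\,\|u\|_{0,p,e}^{p}=2\,\|u\|_{0,p,T}^{p}+p\int_{T}|u|^{p-1}\mathrm{sgn}(u)\,(X-A)\cdot\nabla u\,dX.$$
In the last integral I would bound $|X-A|\le h_{T}$ for every $X\in T$ and then apply H\"older's inequality with exponents $q$ and $p$; since $(p-1)q=p$ this yields the bound $p\,h_{T}\,\|u\|_{0,p,T}^{p-1}\,|u|_{1,p,T}$ for that term.

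Finally, multiplying through by $|e|/(2|T|)$ gives $\|u\|_{0,p,e}^{p}\le \frac{|e|}{|T|}\bigl(\|u\|_{0,p,T}^{p}+\tfrac{p}{2}h_{T}\|u\|_{0,p,T}^{p-1}|u|_{1,p,T}\bigr)$, and to reach the clean constant I would invoke the convexity inequality $(s+t)^{p}\ge s^{p}+p\,s^{p-1}t$ with $s=\|u\|_{0,p,T}$ and $t=h_{T}|u|_{1,p,T}$, together with $2^{p-1}\ge 1$, to recognize the right-hand side as at most $2^{p-1}\frac{|e|}{|T|}\bigl(\|u\|_{0,p,T}+h_{T}|u|_{1,p,T}\bigr)^{p}$. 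Taking $p$-th roots and using $2^{(p-1)/p}=2^{1/q}$ delivers the stated inequality. The only genuinely delicate point is the choice of the radial field $\mathbf{F}=X-A$: its being tangent to the two sides meeting at $A$ is exactly what makes the argument insensitive to the shape of $T$, so that no separate treatment of obtuse triangles is needed; the remaining steps are routine calculus and H\"older/convexity estimates.
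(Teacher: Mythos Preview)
Your argument is correct: the divergence theorem applied to $|u|^{p}(X-A)$, with $A$ the vertex opposite $e$, is the standard device for this shape-independent trace estimate, and the final bookkeeping is sound---indeed your convexity step $s^{p}+\tfrac{p}{2}s^{p-1}t\le s^{p}+ps^{p-1}t\le(s+t)^{p}$ already yields constant $1$, so the factor $2^{1/q}$ is slack (or may be viewed as absorbing the passage from $\|\,|\nabla u|\,\|_{0,p,T}$ to the componentwise seminorm). The paper does not give its own proof of this lemma; it simply refers the reader to Verf\"urth~\cite{V}, where essentially the same radial-field argument can be found.
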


\begin{proof} See for instance \cite{V}. $\Box$
 
\end{proof}
Now we are ready to get bounds for 
$|(u-\Pi_k u)(M_{ij})|$. In order to do that
we consider the triangle $T_{ij}$ associated with
$M_{ij}$ defined in Section \ref{section:triang}.
\begin{lemma}
\label{upikumij}
Let $K=K(a,b,\at,\bt)$ be a convex quadrilateral satisfying either $[D1,D2]$ or $[\Delta 1,D2,D3]$ (equiv. $[\Delta 2,D2,D3]$).
For any $p \geq 1$ we consider its dual exponent  $q$. We have,
\bit
\item[(a)] (Edge nodes)  Assume either $i=k$ and $1\le j\le k$ or $j=k$ and $1\le i\le k$ then
$$|(u-\Pi_k u)(M_{ij})| \leq C \frac{|l|^{1/q}}{h^{1/p}} \left[ \left| u-\Pi_k u \right|_{1,p,T}+
h_{T} \left| u-\Pi_k u \right|_{2,p,T} \right],$$
where $T=T_{ij}$.
\item[(b)] (Interior nodes) If $1 \leq i,j \leq k-1$ then
$$|(u-\Pi_k u)(M_{ij})| \leq C \frac{a^{1/q}}{h^{1/p}} \left[ \left| u-\Pi_k u \right|_{1,p,T}+h_T \left| u-\Pi_k u \right|_{2,p,T} \right],$$
where $T=T_{ij}$.
\eit
\end{lemma}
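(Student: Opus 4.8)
The plan is to reduce the pointwise quantity $w(M_{ij})$, where $w:=u-\Pi_k u$, to a one–dimensional integral of $\nabla w$ along a side of $T=T_{ij}$ and then to control the trace of $\nabla w$ on that side through Lemma \ref{traza}. The geometric observation that makes this work is that in all three cases (top edge, right edge and interior) at least one of the two remaining vertices of $T_{ij}$ is an interpolation node of $\Pi_k$, and there $w$ vanishes.

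Concretely, I would fix such a vanishing vertex $Q$ and let $e=\overline{Q\,M_{ij}}$ be the side of $T$ it determines. Since $u\in W^{k+1,p}(K)$ with $k\ge 2$ we have $w\in W^{2,p}(T)$, so the restriction of $w$ to $e$ lies in $W^{1,p}(e)$ and is therefore absolutely continuous; as $w(Q)=0$, writing $\partial_s$ for the tangential derivative and using $|\partial_s w|\le|\nabla w|$,
\[
|w(M_{ij})|=\Big|\int_e \partial_s w\,ds\Big|\le |e|^{1/q}\,\|\nabla w\|_{0,p,e},
\]
by H\"older's inequality on the segment $e$. Next I would apply Lemma \ref{traza} to $\partial_x w$ and to $\partial_y w$ on $T$ and add the outcomes, obtaining
\[
\|\nabla w\|_{0,p,e}\le C\Big(\frac{|e|}{|T|}\Big)^{1/p}\big[\,|w|_{1,p,T}+h_T\,|w|_{2,p,T}\,\big].
\]
Since $1/p+1/q=1$, the two displays combine into the single master estimate
\[
|w(M_{ij})|\le C\,\frac{|e|}{|T|^{1/p}}\big[\,|w|_{1,p,T}+h_T\,|w|_{2,p,T}\,\big],
\]
which already carries the right structure; the factor $h_T$ is exactly the one in the statement, because Lemma \ref{traza} is applied on $T=T_{ij}$.

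What remains, and where the real work lies, is to bound $|e|/|T|^{1/p}$ by the geometric factor of each case, i.e.\ to establish the area lower bounds $|T|\gtrsim|l|\,h$ (edge) and $|T|\gtrsim a\,h$ (interior). For an edge node I would choose $Q=M^T_{k0}$ (resp.\ $Q=M^T_{i,k-i}$), so that $|e|\le C|l|$ by Lemma \ref{lema:similar}. As $T_{kj}$ and $T_{ik}$ are similar to $\Delta(V_2V_3V_4)$, whose angle at $V_4$ is the angle $\alpha$ of $(D2)$, I would use $|\Delta(V_2V_3V_4)|=\frac{1}{2}|l|\,|d_1|\sin\alpha$ together with $\sin\alpha\ge c$ from $(D2)$ and $|d_1|=\sqrt{a^2+b^2}\comp h$ — the latter because $(\Delta1)$ forces $h\comp\max\{a,b\}$ — to get $|T|\gtrsim|l|\,h$; hence $|e|/|T|^{1/p}\le C|l|^{1/q}/h^{1/p}$, which is (a). For an interior node I would take $Q=M^T_{i0}=M_{i0}$, so $e=\overline{M_{i0}M_{ij}}$ and $|e|\comp a$ by Lemma \ref{lija}(a); Lemma \ref{lija} also gives that the angle of $T_{ij}$ at $M_{i0}$ is bounded away from $0$ and $\pi$ and that its two sides there are comparable to $a$ and to $h$, so $|T|\comp a\,h$ and $|e|/|T|^{1/p}\le C a^{1/q}/h^{1/p}$, which is (b). The only genuinely delicate point is this pair of area estimates; once the similarity of Lemma \ref{lema:similar} and the angle control of Lemma \ref{lija} are invoked, the analytic part above is entirely routine.
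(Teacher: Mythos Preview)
Your proposal is correct and follows essentially the same route as the paper: choose a vertex of $T_{ij}$ at which $u-\Pi_k u$ vanishes, integrate along the corresponding side, apply H\"older and then the trace inequality of Lemma \ref{traza}, and finish by bounding $|e|/|T|^{1/p}$ via Lemma \ref{lema:similar} (edge case) and Lemma \ref{lija} (interior case). The paper is terser on the geometric step---it simply records that Lemma \ref{lema:similar} implies $|e|/|T|^{1/p}\le C|l|^{1/q}/h^{1/p}$ and that Lemma \ref{lija} handles the interior case---whereas you spell out the area lower bounds explicitly; but the arguments are the same in substance.
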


\begin{proof}
\bit
\item[(a)] We write the case $i=k$ and $1 \leq j \leq k$ since the other one follows identically. Calling $e$ the side of $T=T_{kj}$ given by $e=\overline{M_{k0}M_{kj}}$ we get (by using H\"older's inequality, Lemma \ref{traza} and the fact that 
$(u-\Pi_k u)(M_{k0})=0$)
\bq
\label{trazaeT}
\begin{array}{lcl}
|(u-\Pi_k u)(M_{kj})| & \leq & \ds \int_{e} |\partial_e (u-\Pi_k u)|\ dx\\
\\
& \leq & |e|^{1/q} \left\| \partial_e (u-\Pi_k u) \right\|_{0,p,e}\\
\\
& \leq & 2^{1/q} \frac{|e|}{|T|^{1/p}} \left[ \left\| \partial_e (u-\Pi_k u) \right\|_{0,p,T}+
h_{T} \left| \partial_e (u-\Pi_k u) \right|_{1,p,T} \right] .
\end{array}
\eq
The item follows now by Lemma \ref{lema:similar}
that implies $\frac{|e|}{|T|^{1/p}}\le C\frac{|l|^{1/q}}{h^{1/p}}$.

\item[(b)] With the same ideas, consider $(u-\Pi_k u)(M_{i0})=0$ call $e=\overline{M_{i0}M_{ij}}$ and use now Lemma \ref{lija}. $\Box$
\end{itemize}
\end{proof}

\begin{lemma}
Let $K$ be a general convex quadrilateral
and $1 \leq i,j \leq k$,

\begin{enumerate}
 \item If $1\le p< 3$ and $K$ satisfies  $[\Delta 1,D2,D3]$ (equiv. $[\Delta 2,D2,D3]$), then \eqref{nodos1} holds.
 \item If $1\le p$ and $K$ satisfies 
 $[D1,D2]$, then \eqref{nodos1} holds.
\end{enumerate}
  
\bq
\label{nodos1}
|(u-\Pi_k u)(M_{ij})| |\phi_{ij}|_{1,p,K} \leq C h^k |u|_{k+1,p,K},
\eq
where $\phi_{ij}$ is the function basis associated to $M_{ij}$.
\end{lemma}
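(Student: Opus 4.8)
The plan is to prove \eqref{nodos1} by simply multiplying the two estimates already at our disposal --- the nodal bound of Lemma \ref{upikumij} and the basis-function bound of Lemma \ref{estphi} --- and then absorbing the remaining triangle seminorms into $h^k|u|_{k+1,p,K}$. The first thing to observe is that, in each case and for each type of node, the geometric prefactors cancel \emph{exactly}. For an \emph{edge} node, Lemma \ref{upikumij}(a) carries the factor $|l|^{1/q}/h^{1/p}$, while the matching seminorm bound (Lemma \ref{estphi}(a) in case (1), since $[\Delta 1,D2,D3]$ implies $[\Delta 1,D2]$; Lemma \ref{estphi}(d) in case (2)) carries the reciprocal $h^{1/p}/|l|^{1/q}$. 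For an \emph{interior} node the analogous pairing is Lemma \ref{upikumij}(b), giving $a^{1/q}/h^{1/p}$, together with Lemma \ref{estphi}(b) (case (1)) or Lemma \ref{estphi}(c) (case (2)), giving $h^{1/p}/a^{1/q}$. In every situation the product of the two prefactors is a bounded constant, so writing $T=T_{ij}$ for the triangle associated with $M_{ij}$ in Section \ref{section:triang} we are left with
\[
|(u-\Pi_k u)(M_{ij})|\,|\phi_{ij}|_{1,p,K}\le C\left[\,|u-\Pi_k u|_{1,p,T}+h_{T}\,|u-\Pi_k u|_{2,p,T}\,\right].
\]

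Next I would pass from the small triangle $T=T_{ij}$ to the whole element $K$. By construction the three vertices of $T_{ij}$ are interpolation nodes lying in $K$: two are nodes $M^T_{\cdot\cdot}$ of $T(a,b)\subseteq K$ and the third is the node $M_{ij}$ of $K$. Convexity of $K$ then gives $T_{ij}\subseteq K$, whence $h_T\le h$ and every seminorm over $T_{ij}$ is dominated by the corresponding seminorm over $K$. Therefore the whole matter reduces to the two global estimates
\[
|u-\Pi_k u|_{m,p,K}\le C\,h^{k+1-m}\,|u|_{k+1,p,K},\qquad m=1,2.
\]
The case $m=1$ is precisely Lemma \ref{error1pik}. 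The case $m=2$ I would obtain by repeating the argument of Lemma \ref{error1pik} verbatim with second derivatives in place of first: take $\mathcal P_k u$ defined by matching the integrals of $D^\alpha u$ for $|\alpha|\le k$, bound $|u-\mathcal P_k u|_{2,p,K}\le C h^{k-1}|u|_{k+1,p,K}$ by applying Poincar\'e repeatedly, split $u-\Pi_k u=(u-\mathcal P_k u)+(\mathcal P_k u-\Pi_k u)$, and for the polynomial remainder apply Lemma \ref{lema:desPoli} to each $D^2(\mathcal P_k u-\Pi_k u)\in\mathbb{P}_{k-2}$ to transfer its $L^p(K)$ norm to $L^p(T(a,b))$, followed by the triangle inequality.

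The genuine technical input, and the step I expect to be the main obstacle, is therefore the second-order interpolation estimate on the right triangle $T(a,b)$,
\[
|u-\Pi_k u|_{2,p,T(a,b)}\le C\,h^{k-1}\,|u|_{k+1,p,T(a,b)},
\]
the exact analogue of \eqref{eq:LagEnTri}. In case (1) condition $(D3)$ forces $a\comp b$, so $T(a,b)$ is shape-regular and this is the classical isotropic estimate. In case (2), however, $T(a,b)$ may be arbitrarily thin (only $a\le Cb$ is guaranteed, by Lemma \ref{lema:ladoacorto}); the redeeming feature is that $T(a,b)$ always satisfies $MAC(\pi/2)$, and the required anisotropic higher-order estimate holds under the maximum angle condition by the theory of \cite{A2,BA,J1}. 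Granting this input, the chain of inequalities above yields \eqref{nodos1}, with a constant depending only on the constants of $[\Delta 1,D2,D3]$ and on $p$ in case (1) (inheriting the possible $1/(3-p)$ blow-up of Lemma \ref{estphi}), and with a constant depending only on the constants of $[D1,D2]$, uniformly in $p$, in case (2).
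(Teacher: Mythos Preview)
Your proof is correct and follows essentially the same approach as the paper, which condenses the argument to a one-line sketch invoking Lemmas \ref{estphi}, \ref{upikumij}, and the triangle error estimate \eqref{eq:LagEnTri}. You have unpacked that sketch correctly: the pairing of prefactors, the inclusion $T_{ij}\subset K$, and the extension of Lemma \ref{error1pik} to the seminorm of order $m=2$ (which the paper leaves implicit but is indeed required, and is available from \cite{A2} under the maximum angle condition for $T(a,b)$).
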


\proof The proof is essentially a combination of Lemmas \ref{estphi} and \ref{upikumij} together with the error estimation for triangles (\ref{eq:LagEnTri}), recalling that each $T_{ij}$ satisfies the maximum angle condition (Lemmas \ref{lema:similar} and \ref{lija}). $\Box$

\section{Main Theorem}
\label{section5}
\setcounter{equation}{0}

The  $L^p$ error estimate for a general convex quadrilateral was done in \cite{AM} for $k=1$ and any $p$. The argument used there works exactly in the same way for an arbitrary $k$. 

\begin{theorem}
\label{teoerrornormap}
Let $K$ be an arbitrary convex quadrilateral with diameter $h$. For any $1\le k$ and $1\le p$. There exists a constant $C$ independent of $K$ such that
\bq
\label{errornormap}
\left\| u-Q_k u \right\|_{0,p,K} \leq Ch^{k+1} |u|_{k+1,p,K}.
\eq
\end{theorem}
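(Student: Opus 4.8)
The plan is to reduce the $L^p$ estimate on the quadrilateral to the already-established $L^p$ estimate on the reference triangle $T=T(a,b)$, exactly mirroring the strategy used for the $W^{1,p}$ seminorm. First I would note that by Theorem \ref{teo:carmac} (or more directly by the reductions of Section 3), it suffices to prove \eqref{errornormap} for a reference element $K=K(a,b,\at,\bt)$ satisfying one of the standard combinations of geometric conditions, since Lemma \ref{comperror} transfers both the interpolation error and the seminorm $|u|_{k+1,p}$ between $C$-equivalent elements with constants depending only on $\|B\|,\|B^{-1\|}$. Crucially, here the situation is far more favorable than in the seminorm case: because \eqref{errornormap} is claimed for \emph{arbitrary} convex quadrilaterals and \emph{any} $p$, the constant need not track $l$, $a$, or $b$ at all, so condition $(\Delta 1)$ (which always holds after the reductions, by convexity and \eqref{convJK}) should be all that is needed.

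Next I would set up the same splitting as in \eqref{destr}, but for the full norm rather than the seminorm:
$$
\|u-Q_ku\|_{0,p,K}\le \|u-\Pi_ku\|_{0,p,K}+\sum_{i,j\neq 0}|(u-\Pi_ku)(M_{ij})|\,\|\phi_{ij}\|_{0,p,K}.
$$
For the first term I would run the argument of Lemma \ref{error1pik} at the level of the norm instead of the seminorm: introduce the averaged polynomial $\mathcal P_ku\in\mathbb P_k$, use the Poincar\'e/Bramble--Hilbert estimate $\|u-\mathcal P_ku\|_{0,p,K}\le Ch^{k+1}|u|_{k+1,p,K}$ valid on the convex set $K$, and then compare $\mathcal P_ku$ with $\Pi_ku$ using Lemma \ref{lema:desPoli} together with the triangle estimate \eqref{eq:LagEnTri0}. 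This yields $\|u-\Pi_ku\|_{0,p,K}\le Ch^{k+1}|u|_{k+1,p,K}$ with a constant depending only on $k$ and the constant in $(\Delta 1)$.

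For the sum over the remaining nodes I would bound each $\|\phi_{ij}\|_{0,p,K}$ by a change of variables through $F_K$, estimating $\|\phi_{ij}\|_{0,p,K}^p=\int_{\hk}|\hphi_{ij}|^p J_K\,d\hx d\hy$; since $J_K\le C\,ab$ and $|\hphi_{ij}|$ is uniformly bounded, this gives $\|\phi_{ij}\|_{0,p,K}\le C(ab)^{1/p}\le C\,h^{2/p}$. For the nodal errors I would reuse the trace-based bounds of Lemma \ref{upikumij}, which give $|(u-\Pi_ku)(M_{ij})|\le C\,h^{-1/p}\cdot(\text{length})^{1/q}[\,|u-\Pi_ku|_{1,p,T}+h_T|u-\Pi_ku|_{2,p,T}\,]$, and then apply \eqref{eq:LagEnTri} (and the analogous bound for the second seminorm, which costs one fewer power of $h$) to produce a factor $h^k$; combined with the $h^{2/p}$ from $\|\phi_{ij}\|_{0,p,K}$ and the dimensional length factors, the powers of $h$ assemble to $h^{k+1}$. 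The main obstacle, and the only place demanding care, is the precise bookkeeping of the powers of $a,b,l,h$ in this product so that the geometric lengths cancel and leave a clean $h^{k+1}|u|_{k+1,p,K}$ with a constant independent of $K$; since $C$ here may depend on the maximal-angle/minimal-angle constants absorbed into $(\Delta 1)$ but not on the element size, this cancellation is exactly what makes the $L^p$ estimate uniform over all convex quadrilaterals, in contrast to the seminorm case.
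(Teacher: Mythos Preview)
The paper's own proof is a one-line citation to \cite{AM} (equation (41) in Theorem 6.1 and Lemma 6.1 there), so you are attempting considerably more than the paper does. That is fine in principle, but your proposal has a genuine gap that prevents it from reaching the stated conclusion.

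The theorem asserts that $C$ is independent of $K$ for an \emph{arbitrary} convex quadrilateral, with no angle or shape hypothesis whatsoever. Your argument, however, passes through the reductions of Section~3 and through Lemma~\ref{upikumij}, all of which require one of the structural packages $[\Delta 1,D2]$, $[D1,D2]$, or $[\Delta 1,D2,D3]$. Those conditions do not come for free: a generic convex quadrilateral need not be $C$-equivalent to any $K(a,b,\at,\bt)$ satisfying $(\Delta 1)$ with a universal constant. For instance, the convex quadrilateral with vertices $(0,0),(1,0),(N,N),(0,1)$ has $\at/a=\bt/b=N$, which is unbounded. Your claim that ``$(\Delta 1)$ always holds after the reductions, by convexity and \eqref{convJK}'' is incorrect: \eqref{convJK} gives the \emph{lower} bound $\at/a+\bt/b>1$, not the upper bound required by $(\Delta 1)$. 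Without $(\Delta 1)$, Lemma~\ref{lema:desPoli}, Lemma~\ref{error1pik}, the basis-function bound $J_K\le Cab$, and Lemma~\ref{upikumij} (which in turn rests on Lemmas~\ref{lema:similar} and~\ref{lija}) are all unavailable.

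You in fact concede the problem in your final sentence, where you say ``$C$ here may depend on the maximal-angle/minimal-angle constants absorbed into $(\Delta 1)$''. That is exactly what the theorem forbids. The point of the result---and the reason the paper defers to the separate argument in \cite{AM}---is that the $L^p$ estimate enjoys a uniformity that the $W^{1,p}$ seminorm estimate does not, and proving it requires a route that does not pass through the angle-dependent machinery of Sections~3--5.
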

\begin{proof}
 See equation (41) in Theorem 6.1 of \cite{AM}, as well as  Lemma 6.1 in the same paper. $\Box$
\end{proof}
Now we can present our main result.
\begin{theorem} 
\label{theototal}
Let $K$ be a convex quadrilateral with diameter $h$ and $2\le k$ an integer:
\begin{enumerate}
 \item If $K$ satisfies $DAC(\psi_m,\psi_M)$, hence
 \eqref{eq:finalest} holds for any $1\le p$ with 
 $C=C(\psi_m,\psi_M)$.
 \item If $K$ satisfies $mac(\psi_m)$, hence
 \eqref{eq:finalest} holds for any $1\le p<3$ with 
 $C=C(\psi_m)$.
\end{enumerate}
\bq
\label{eq:finalest}
\left\| u-Q_k u \right\|_{0,p,K} + h \left| u-Q_ku \right|_{1,p,K} \leq Ch^{k+1} |u|_{k+1,p,K}.
\eq
\end{theorem}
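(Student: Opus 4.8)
The plan is to prove the two contributions in \eqref{eq:finalest} separately. The term $\left\|u-Q_ku\right\|_{0,p,K}$ is controlled directly by Theorem \ref{teoerrornormap}, whose constant is independent of $K$; hence all the geometric dependence will come from the seminorm term $h\left|u-Q_ku\right|_{1,p,K}$, for which it suffices to establish
\[
\left|u-Q_ku\right|_{1,p,K}\le C h^k |u|_{k+1,p,K}
\]
with $C$ as claimed. First I would reduce to a reference element: by Lemma \ref{comperror} a $C$-equivalence preserves, up to multiplicative constants, both $\left|u-Q_ku\right|_{1,p,K}$ and $|u|_{k+1,p,K}$, so it is enough to prove this bound on some $K(a,b,\at,\bt)$ equivalent to $K$ in the sense of Definition \ref{def:equi}.

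Next I would expand the error through the intermediate $\mathbb{P}_k$ interpolant on $T(a,b)$ as in \eqref{destr},
\[
|u-Q_ku|_{1,p,K} \leq |u-\Pi_ku|_{1,p,K} + \sum_{i,j \neq 0} |(\Pi_ku-u)(M_{ij})|\, |\phi_{ij}|_{1,p,K},
\]
the sum running only over the surviving nodes with $i,j\ge 1$, of which there are finitely many (a number depending only on $k$, and nonempty precisely because $k\ge 2$). The first term is bounded by $Ch^k|u|_{k+1,p,K}$ via Lemma \ref{error1pik}, which only requires $(\Delta 1)$, while each summand is bounded by $Ch^k|u|_{k+1,p,K}$ through \eqref{nodos1}; summing the bounded number of terms then yields the desired estimate.

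It remains to check that the correct branches apply. For item (1), Theorem \ref{teo:cardac} gives an equivalent $K(a,b,\at,\bt)$ under $[D1,D2]$; since $D1\implies\Delta 1$, Lemma \ref{error1pik} controls the first term, and the version of \eqref{nodos1} valid for every $1\le p$ (the $[D1,D2]$ branch) applies, producing a constant $C=C(\psi_m,\psi_M)$. For item (2), Theorem \ref{teo:carmac} provides an equivalent $K(a,b,\at,\bt)$ satisfying \emph{either} $[D1,D2]$ \emph{or} $[\Delta 1,D2,D3]$; in both sub-cases $(\Delta 1)$ holds so the first term is fine, and for the sum I would invoke the $[D1,D2]$ branch of \eqref{nodos1} in the former case and the $[\Delta 1,D2,D3]$ branch in the latter.

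The main obstacle is precisely this $mac$ case: the dichotomy of reference elements (regular versus possibly degenerating into a triangle) forces one to handle the two different branches of \eqref{nodos1}, and the branch coming from $[\Delta 1,D2,D3]$ is available only for $1\le p<3$ because the constant there may behave like $1/(3-p)$ (Remark \ref{rem:p<3}, ultimately via Lemma \ref{lemaI}); this is exactly what confines item (2) to $p<3$. In contrast, the $DAC$ branch rests on Lemma \ref{lemaIparadac} rather than Lemma \ref{lemaI}, so it survives for all $p\ge 1$ and delivers item (1) without restriction.
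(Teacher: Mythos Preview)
Your proposal is correct and follows essentially the same route as the paper: reduce the $L^p$ part to Theorem \ref{teoerrornormap}, pass to an equivalent $K(a,b,\at,\bt)$ via the characterization theorems, and then combine the decomposition \eqref{destr} with Lemma \ref{error1pik} and the appropriate branch of \eqref{nodos1}. Your discussion of the $mac$ dichotomy and of why the $[\Delta 1,D2,D3]$ branch limits item (2) to $p<3$ matches the paper's reasoning exactly.
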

 \begin{proof}
 Since the $L^p$ estimate holds for any convex quadrilateral, it is enough to prove 
\bq
\label{w1prdp}
\left| u-Q_ku \right|_{1,p,K} \leq Ch^k |u|_{k+1,p,K}.
\eq
Moreover, in order to prove (1) (resp. (2)) and thanks to Theorem \ref{teo:cardac} (resp. Theorem \ref{teo:carmac}) we can assume that  $K=K(a,b,\at,\bt)$ under $[D1,D2]$ (resp. either $[\Delta 1, D2, D3]$ or $[D1,D2]$). Therefore \eqref{w1prdp} follows from (\ref{destr}) combined with (\ref{errorpikinK}) and (\ref{nodos1}). $\Box$
 \end{proof}

 \smallskip
 To finish we present two counterexamples. In the first one we focus on the case  $1\le p<3$ showing a collection of elements
with uniform $RDP$ parameters (actually $RDP(\sqrt{5},3/4\pi)$) for which the constant in the $W^{1,p}$ interpolation error   blows up. The family does
not obey $mac$ although all the elements are under the $MAC(\frac{3\pi}{4}$). In particular,
the counterexample shows that the estimate may fail if an angle approaches cero. For the sake of simplicity we choose $k=2$.
\begin{cex}
\label{cexrdp} (Case $1\le p <3$)  For $0<s<1/2$ take $K=K(1,s,s,2s)$ 
\begin{figure}[h]
\resizebox{8cm}{6cm}{\includegraphics{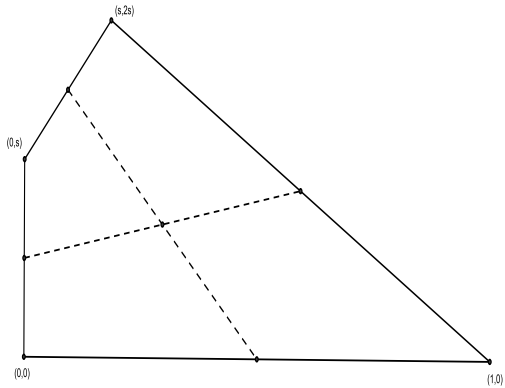}}
\caption{Representation of the quadrilateral $K(1,s,s,2s)$ and its nodes as a $\q_2$ element.}
 \label{fig:ex}
\end{figure}
and consider the function $u(x,y)=x(x-1/2)(x-1)$ which does not belong to the $\q_2$ space. Since $u(M_{0l})=0=u(M_{l0})$ for   $0\le l \le 2$ we have
$$Q_2 u = u(M_{11})\phi_{11}+u(M_{12})\phi_{12} + u(M_{22})\phi_{22} + u(M_{21})\phi_{21}$$ 
and therefore
$$\ds \frac{1}{s-1}\frac{\partial Q_2 u}{\partial y} =
\frac{(s-3)(s+1)}{2^6} \frac{\partial \phi_{11}}{\partial y}+
s \left( \frac{s+1}{2^3} \frac{\partial \phi_{12}}{\partial y} +
(s-1/2) \frac{\partial \phi_{22}}{\partial y} +
\frac{(s-2)}{2^3} \frac{\partial \phi_{21}}{\partial y} \right)$$
Then, for a suitable constant $C$ independent of $s$, we have
$$\left\| \frac{\partial \phi_{11}}{\partial y} \right\|_{0,p,K} \leq
C \left[ \left\| \frac{\partial Q_2 u}{\partial y} \right\|_{0,p,K} +
s \left( \left\| \frac{\partial \phi_{12}}{\partial y} \right\|_{0,p,K}+
\left\| \frac{\partial \phi_{22}}{\partial y} \right\|_{0,p,K} +
\left\| \frac{\partial \phi_{21}}{\partial y} \right\|_{0,p,K} \right) \right].$$

Using item (a) of Lemma \ref{estphi}  and taking into account that $h\comp 1$ and $|l|\comp s$ we have
$$\left\| \frac{\partial \phi_{11}}{\partial y} \right\|_{0,p,K} \leq
3C \left( \left\| \frac{\partial Q_2 u}{\partial y} \right\|_{0,p,K}+s^{1/p} \right).$$

Assume that \eqref{w1prdp} holds for this family. In that case we would have 
$$\left\| \frac{\partial Q_2 u}{\partial y} \right\|_{0,p,K}=
\left\| \frac{\partial (Q_2 u-u)}{\partial y} \right\|_{0,p,K} \leq
|Q_2u-u|_{1,p,K} \leq \bar{C} h^2 |u|_{3,p,K}$$

where $h^2 \comp 1$ and $|u|_{3,p,K} \comp |K|^{1/p} \comp s^{1/p}$. Consequently
\bq
\label{arriba}
\left\| \frac{\partial \phi_{11}}{\partial y} \right\|_{0,p,K} \leq C s^{1/p}.
\eq

On the other hand, a straightforward computation shows that
$$\ds \left( \frac{\partial \phi_{11}}{\partial y} \circ F_K \right)(\hx,\hy) =
\frac{2^4\hx[(s-1)\hy(\hx-\hy)+(1-\hx)(1-2\hy)]}{s[1+\hx+(s-1)\hy]}.$$

Therefore
$$\ds \left\| \frac{\partial \phi_{11}}{\partial y} \right\|_{0,p,K}^p =
\ds \int_{[0,1]^2} \frac{2^{4p}\hx^p | (s-1)\hy(\hx-\hy)+(1-\hx)(1-2\hy) |^p}{s^{p-1}[1+\hx+(s-1)\hy]^{p-1}}\ d\hx d\hy$$

Let $R=[0,1/8] \times [1/4,3/8] \subset [0,1]^2$. It is easy to check that on $R$ we have
\bq
(s-1)\hy(\hx-\hy)+(1-\hx)(1-2\hy) > (s-1)\hy(\hx-\hy) > 0  
\eq

which together with the fact $1+\hx+(s-1)\hy \leq 1+\hx$ allow us to obtain
\bq
\ds \left\| \frac{\partial \phi_{11}}{\partial y} \right\|_{0,p,K}^p \geq
\ds \frac{2^{4p}(1-s)^p}{s^{p-1}} \int_{R} \frac{\hx^p \hy^p(\hx-\hy)^p}{[1+\hx]^{p-1}}\ d\hx d\hy.
\eq

Since the function $\hy^p(\hx-\hy)^p$ is bounded below by a positive constant on $R$ and the function $\hx^p/(1+\hx)^{p-1}$ is integrable over this domain follows that
\bq
\label{abajo}
\ds \left\| \frac{\partial \phi_{11}}{\partial y} \right\|_{0,p,K} \geq C \frac{1}{s^{1/q}}
\eq

where $q$ is the dual exponent of $p$. Finally, combining (\ref{arriba}) with (\ref{abajo}) and taking $s \to 0$ we are lead to  a contradiction  and as a consequence the error estimate can not hold with a uniform constant $C$.  
\end{cex}

\begin{rem}
 Recall that  for $k=1$ and $1\le p<3$ the constant in the interpolation estimate can be bounded in terms of the constants given in the $RDP$ condition \cite{AM}. Actually, the interior node (available in $k=2$) plays a fundamental  role in the counterexample. This could lead the reader to the conclusion that removing internal nodes may help to weaken the conditions under which the estimate \eqref{errorinwp} holds. Regretfully this is not possible. Indeed from \cite{ABF} we know that  the accuracy of serendipity
 elements can be seriously deteriorated even for regular elements. The reason of that is the failure of the inclusion of $\mathbb{P}_k$ in the interpolation
 space. Our proof relies  strongly on this property  (see for instance the derivation of \eqref{destr}).

\end{rem}
\begin{figure}[h]
\resizebox{10cm}{5cm}{\includegraphics{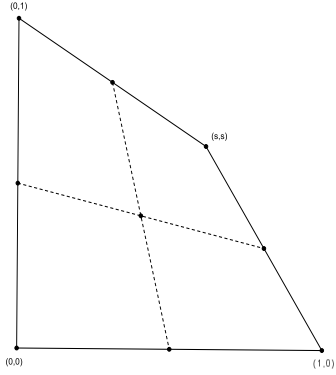}}
\caption{Representation of the quadrilateral $K(1,1,s,s)$ and its nodes as a $\q_2$ element.}
\label{fig:ex2}
\end{figure}
\begin{cex} (Case $3\le p$)
\label{cexrangep}
Consider the family $K(1,1,s,s)$, with $\frac{1}{2} < s \leq 5/8$, and the function $u(x,y)=x(x-1/4)(x-3/4)(x-3/8)(x-1)$. Observe that the maximum angle of $K=K(1,1,s,s)$ approaches $\pi$ as  $s \to \frac{1}{2}$ while $K$ verifies  $mac(\pi/4)$ for any value of $s$ in the selected range. Arguing as in previous counterexample we have
$$Q_2u = u(M_{11})\phi_{11}+u(M_{12})\phi_{12} + u(M_{21})\phi_{21} + u(M_{22})\phi_{22},$$
hence 
$$\ds \frac{\partial (Q_2u)}{\partial y} = u(M_{11}) \frac{\partial \phi_{11}}{\partial y}+
u(M_{12})\frac{\partial \phi_{12}}{\partial y} + 
u(M_{21})\frac{\partial \phi_{21}}{\partial y} + 
u(M_{22})\frac{\partial \phi_{22}}{\partial y}.$$

Observe that $u(M_{11}), u(M_{12})$ and $u(M_{21})$ are polynomial expressions in the variable $s$ having $1/2$ as a single root. Therefore we  can write
$$u(M_{ij})=(s-1/2) q_{ij}(s)$$
for each $(i,j) \in I=\{ (1,1), (1,2), (2,1) \}$ where $q_{ij}$ is a polynomial. On the other hand $|u(M_{22})|>C>0$ if $\frac{1}{2} < s \leq 5/8$. Therefore

\bq
\label{phi22}
\ds \left\| \frac{\partial \phi_{22}}{\partial y} \right\|_{0,p,K} \leq 
C \left[ \left\| \frac{\partial Q_2u}{\partial y} \right\|_{0,p,K}   
+ (s-1/2) \sum_{(i,j) \in I} \left\| \frac{\partial \phi_{ij}}{\partial y} \right\|_{0,p,K}
\right].
\eq
If the error estimates holds then
$$\left\| \frac{\partial Q_2u}{\partial y} \right\|_{0,p,K} =
\left\| \frac{\partial Q_2u-u}{\partial y} \right\|_{0,p,K} \leq 
| Q_2u -u|_{1,p,K} \leq C |u|_{3,p,K},$$
since $h\comp 1$ and as a consequence
\bq
\label{Q2u}
\left\| \frac{\partial Q_2u}{\partial y} \right\|_{0,p,K} \leq C.
\eq
On the other hand for $1/2 < s \leq 5/8$ we readily notice that $\sin(\alpha) \comp (s-1/2)$. 
 Then combining this with \eqref{eq:IPintermsofsin} 
and Lemma \ref{edgesnodes} we get 
\bq
\label{sumphiij}
\ds \sum_{(i,j) \in I} \left\| \frac{\partial \phi_{ij}}{\partial y} \right\|_{0,p,K} \leq C \frac{1}{(s-1/2)^{1/q}}
\eq
where $q$ is the dual exponent to $p$. Finally, (\ref{sumphiij}) combined with (\ref{phi22}) with (\ref{Q2u}) give us
\bq
\label{phi22above}
\ds \left\| \frac{\partial \phi_{22}}{\partial y} \right\|_{0,p,K} \leq C
\eq
for some positive constant. However, a straightforward calculation yields
$$\ds \left( \frac{\partial \phi_{22}}{\partial y} \circ F_K \right) (\hx,\hy) = 
\frac{2 \hx \left[ (s-1)\hy(\hx-\hy)+(2\hx-1)(4\hy-1) \right]}{1+(s-1)(\hx+\hy)}$$
hence
$$\ds \left\| \frac{\partial \phi_{22}}{\partial y} \right\|_{0,p,K}^p = 
\int_0^1 \int_0^1 
\frac{(2 \hx \left[ (s-1)\hy(\hx-\hy)+(2\hx-1)(4\hy-1) \right])^p}{(1+(s-1)(\hx+\hy))^{p-1}}\ d\hx d\hy.$$
Let $T$ be the triangle with vertices $(3/4,3/4), (3/4,1)$ and $(1,1)$. It is easy to check that 
$$\ds \left\| \frac{\partial \phi_{22}}{\partial y} \right\|_{0,p,K}^p \geq 
\ds C \int_T \frac{1}{(1+(s-1)(\hx+\hy))^{p-1}}\ d\hx d\hy,$$
and integrating explicitly for $p>3$ we get
$$\ds \left\| \frac{\partial \phi_{22}}{\partial y} \right\|_{0,p,K}^p \geq 
C \frac{(2s-1)^{3-p}/2+(3s-1)^{3-p}/2^{4-p}-(7s-3)^{3-p}/4^{3-p}}{(s-1)^2(2-p)(3-p)}$$
and hence $\ds \left\| \frac{\partial \phi_{22}}{\partial y} \right\|_{0,p,K}^p \to \infty$ if $s \to 1/2$. Since this fact contradicts (\ref{phi22above}) we conclude that the error estimate does not hold. The case $p=3$ follows similarly.  
\end{cex}

\end{document}